\documentclass[11pt,a4paper,reqno]{amsart}

\usepackage{fullpage}
\usepackage{xcolor}
\usepackage{amsmath,amssymb,amsthm,mathtools,mathrsfs}
\usepackage{aliascnt}
\usepackage[colorlinks=true,linkcolor=blue,citecolor=blue,urlcolor=blue]{hyperref}
\usepackage[nameinlink,noabbrev]{cleveref}

\numberwithin{equation}{section}
\crefname{equation}{}{}

\newtheorem{theorem}{Theorem}[section]

\newaliascnt{proposition}{theorem}
\newtheorem{proposition}[proposition]{Proposition}
\aliascntresetthe{proposition}

\newaliascnt{lemma}{theorem}
\newtheorem{lemma}[lemma]{Lemma}
\aliascntresetthe{lemma}

\newaliascnt{corollary}{theorem}
\newtheorem{corollary}[corollary]{Corollary}
\aliascntresetthe{corollary}

\newaliascnt{definition}{theorem}
\newtheorem{definition}[definition]{Definition}
\aliascntresetthe{definition}

\theoremstyle{remark}

\newaliascnt{remark}{theorem}
\newtheorem{remark}[remark]{Remark}
\aliascntresetthe{remark}

\crefname{theorem}{Theorem}{Theorems}
\crefname{proposition}{Proposition}{Propositions}
\crefname{lemma}{Lemma}{Lemmas}
\crefname{corollary}{Corollary}{Corollaries}
\crefname{definition}{Definition}{Definitions}
\crefname{remark}{Remark}{Remarks}

\crefname{equation}{}{}

\newcommand{\R}{\mathbb R}
\newcommand{\ii}{\mathrm i}
\newcommand{\ee}{\mathrm e}
\newcommand{\dd}{\,\mathrm d}
\newcommand{\norm}[1]{\left\lVert #1\right\rVert}
\newcommand{\snorm}[1]{\lVert #1\rVert}
\newcommand{\abs}[1]{\left\lvert #1\right\rvert}

\newcommand{\cC}{\mathcal C}
\newcommand{\cB}{\mathcal B}
\newcommand{\cE}{\mathcal E}

\newcommand{\Ei}{\mathrm{Ei}}

\title{A Trichotomy for Modified Scattering\\
	Across the Yukawa--Coulomb Transition}

\author{Yonggeun Cho}
\address[Y. Cho]{Department of Mathematics and Institute of Pure and Applied Mathematics, Jeonbuk National University, Jeonju 54896, Republic of Korea}
\email{\href{mailto:changocho@jbnu.ac.kr}{changocho@jbnu.ac.kr}}

\author{Jinyeop Lee}
\address[J. Lee]{Department of Applied Mathematics, Kyung Hee University, 1732 Deogyeong-daero, Giheung-gu, Yongin-si, Gyeonggi-do, South Korea}
\email{\href{mailto:jinyeop.lee@khu.ac.kr}{jinyeop.lee@khu.ac.kr}}

\date{}

\begin{document}
	
	\begin{abstract}
		We study the long-time asymptotics of the three-dimensional Hartree equation with Yukawa potential
		\[
		V_\mu(x)=\frac{\ee^{-\mu\abs{x}}}{\abs{x}},
		\qquad 0\leq\mu\leq1.
		\]
		The Coulomb case corresponds to $\mu=0$, while $\mu>0$ introduces the screening length $\mu^{-1}$.
		In the limit $\mu\to0$ and $t\to\infty$, the asymptotic behavior depends on the comparison between the observation scale $t$ and the screening length $\mu^{-1}$, equivalently on the parameter $\mu t$.
		This leads to three distinct asymptotic regimes, according as $\mu t\to0$, $\mu t\to L\in(0,\infty)$, or $\mu t\to\infty$, with different modified scattering phases in each case.
	\end{abstract}
	
	\maketitle
	
	\section{Introduction and history}
	\label{sec:introduction-history}
	
	We consider the Cauchy problem for the three-dimensional Hartree equation with Yukawa--Coulomb interaction
	\begin{equation}\label{eq:hartree-yukawa-coulomb}
		\begin{cases}
			\displaystyle
			\ii\partial_t u_\mu
			=
			-\frac12\Delta u_\mu
			+
			\kappa
			\bigl(V_\mu*\abs{u_\mu}^2\bigr)u_\mu,\\[0.4em]
			u_\mu(0)=u_{\mathrm{in}},
		\end{cases}
	\end{equation}
	where $\kappa\in\{+1,-1\}$. For $0\leq\mu\leq1$, the interaction potential is
	\begin{equation}\label{eq:Yukawa-potential-definition}
		V_\mu(x)
		=
		\frac{\ee^{-\mu\abs{x}}}{\abs{x}}.
	\end{equation}
	The endpoint $\mu=0$ is understood as the Coulomb potential $V_0(x)=\abs{x}^{-1}$.
	
	For fixed $\mu>0$, the Yukawa potential is exponentially screened at distances larger than the screening length $\mu^{-1}$. For $\mu=0$, the Coulomb potential is unscreened and produces the logarithmic long-range phase.
	
	In the Thomas--Fermi approximation for a metal or electron gas, the Coulomb Fourier multiplier $4\pi/\abs{\xi}^2$ is replaced by $4\pi/(\abs{\xi}^2+\mu^2)$, where $\mu$ is the inverse screening length; see Ashcroft--Mermin~\cite[Chap.~17, pp.~341--344]{AshcroftMermin1976}. This approximation can also be viewed as the static long-wavelength limit of the Lindhard dielectric response~\cite[pp.~8--36]{Lindhard1954}.
	
	The scattering behavior of \eqref{eq:hartree-yukawa-coulomb} changes discontinuously at the endpoint $\mu=0$. For fixed $\mu>0$, the Yukawa potential is exponentially decaying and hence belongs to the short-range side of Hartree scattering theory; see, for instance,~\cite{ChoHwangOzawa2016}. By contrast, the Coulomb Hartree equation is long-range, and the usual linear scattering has to be replaced by modified scattering with a logarithmic phase. This phenomenon was developed in the classical works of Ginibre--Ozawa~\cite{GinibreOzawa1993}, Hayashi--Naumkin~\cite{HayashiNaumkin1998AJM}, and Hayashi--Naumkin--Ozawa~\cite{HayashiNaumkinOzawa1998SIMA}. Related constructions of modified wave operators for long-range Hartree equations were given by Ginibre--Velo~\cite{GinibreVelo2000I,GinibreVelo2000II} and Nakanishi~\cite{Nakanishi2002}. A Fourier-space and stationary-phase approach, close in spirit to wave-packet arguments, is due to Kato--Pusateri~\cite{KatoPusateri2011}. More recent approaches to modified scattering for Hartree-type equations include low-regularity and wave-packet methods~\cite{VanHoose2024Hartree,VanHoose2025Nonlocal} and infinite-rank Coulomb Hartree dynamics~\cite{NguyenYou2025}.
	
	The point of the present paper is different from both the fixed Coulomb problem and the fixed short-range problem. We study the singular two-parameter limit
	\[
	\mu\to0,
	\qquad
	t\to\infty.
	\]
	At first sight, one might expect that the limit $\mu\to0$ simply recovers the usual Coulomb modified scattering phase. This is not the case. The long-time asymptotics are governed by the comparison between the dispersive scale $t$ and the screening length $\mu^{-1}$, or equivalently by the single parameter $\mu t$. As a consequence, the Coulomb limit and the long-time limit do not commute. Along sequences $\mu_n\to0$ and $t_n\to\infty$, the effective transition is controlled by whether the observation scale $t_n$ is smaller than, comparable to, or larger than the screening length $\mu_n^{-1}$.
	
	\section{Main result}
	\label{sec:main-result}
	
	\subsection{Notation and function spaces}
	\label{subsec:notation-function-spaces}
	
	We define the weighted Sobolev space
	\begin{equation}\label{eq:Sigma-four-definition}
		\Sigma_4
		:=
		\left\{
		u\in H^4(\R^3):
		\abs{x}^4u\in L^2(\R^3)
		\right\},
		\qquad
		\snorm{u}_{\Sigma_4}
		:=
		\norm{u}_{H^4}
		+
		\snorm{\abs{x}^4u}_{L^2}.
	\end{equation}
	We use the notation
	\begin{equation}\label{eq:oLp-notation}
		F_n
		=
		G_n
		+
		o_{L_x^p}(1)
	\end{equation}
	to mean that
	\[
	\norm{F_n-G_n}_{L_x^p}
	\to
	0
	\qquad
	\text{as }n\to\infty.
	\]
	
	\subsection{Phase functionals}
	\label{subsec:phase-functionals}
	
	We define the \emph{Coulomb phase functional} by
	\begin{equation}\label{eq:Coulomb-phase-functional}
		\cC[G](v)
		:=
		\kappa
		\int_{\R^3}
		\frac{\abs{G(w)}^2}{\abs{v-w}}
		\,\dd w.
	\end{equation}
	For $L\in[0,\infty)$, define the \emph{transition kernel}
	\begin{equation}\label{eq:transition-kernel-bL}
		b_L(z)
		:=
		\frac1{\abs{z}}
		\int_0^L
		\frac{\ee^{-r\abs{z}}-1}{r}
		\,\dd r,
		\qquad
		z\neq0.
	\end{equation}
	The corresponding \emph{finite transition phase} is
	\begin{equation}\label{eq:finite-transition-phase}
		\cB_L[G](v)
		:=
		\kappa
		\int_{\R^3}
		b_L(v-w)\abs{G(w)}^2
		\,\dd w.
	\end{equation}
	The \emph{limiting transition kernel} is
	\begin{equation}\label{eq:limiting-transition-kernel}
		b_\infty(z)
		:=
		\frac{-\gamma-\log\abs{z}}{\abs{z}},
		\qquad
		z\neq0,
	\end{equation}
	where $\gamma$ denotes the Euler constant.
	The limiting Yukawa correction is
	\begin{equation}\label{eq:limiting-Yukawa-correction}
		\cB_\infty[G](v)
		:=
		\kappa
		\int_{\R^3}
		b_\infty(v-w)\abs{G(w)}^2
		\,\dd w.
	\end{equation}
	
	The functionals $\cB_L$ and $\cB_\infty$ arise from the decomposition of the frozen nonlinear phase.
	For $0\leq\mu\leq1$, define
	\begin{equation}\label{eq:rescaled-Yukawa-kernel}
		K_\lambda(z)
		:=
		\frac{\ee^{-\lambda\abs{z}}}{\abs{z}},
		\qquad
		\lambda\geq0.
	\end{equation}
	For a fixed profile $G$, define the \emph{frozen phase} such that
	\begin{equation}\label{eq:frozen-phase-definition}
		\Phi_\mu[G](t,v)
		:=
		\kappa
		\int_1^t
		\frac1s
		\left(K_{\mu s}*\abs{G}^2\right)(v)
		\,\dd s.
	\end{equation}
	Then the basic identity is
	\begin{equation}\label{eq:frozen-phase-decomposition-preview}
		\Phi_\mu[G](t)
		=
		\cC[G]\log t
		+
		\cB_{\mu t}[G]
		-
		\cB_\mu[G].
	\end{equation}
	This identity is the source of the three regimes in the main theorem.
	
	\subsection{Main result}
	\label{subsec:main-result}
	
	\begin{theorem}
		\label{thm:main-trichotomy}
		There exists $\varepsilon_0>0$ such that the following holds.
		Let $0<\varepsilon\leq\varepsilon_0$, and assume that
		\begin{equation}\label{eq:main-theorem-smallness}
			\norm{u_{\mathrm{in}}}_{\Sigma_4}
			\leq
			\varepsilon.
		\end{equation}
		For each $0\leq\mu\leq1$, let $u_\mu$ be the solution to \eqref{eq:hartree-yukawa-coulomb} with initial datum $u_{\mathrm{in}}$.
		Then there exists a profile
		\begin{equation}\label{eq:main-limiting-profile-definition}
			W=W_0 \in L^2 \cap L^\infty
		\end{equation}
		such that the following trichotomy holds:
		Let $\mu_n\to0$ and $t_n\to\infty$.
		
		If $\mu_nt_n\to0$, then
		\begin{equation}\label{eq:main-theorem-regime-small-mut}
			u_{\mu_n}(t_n,x)
			=
			(\ii t_n)^{-3/2}
			\ee^{\frac{\ii\abs{x}^2}{2t_n}}
			\ee^{-\ii\cC[W](x/t_n)\log t_n}
			W(x/t_n)
			+
			o_{L_x^2}(1).
		\end{equation}
		
		If $\mu_nt_n\to L\in(0,\infty)$, then
		\begin{equation}\label{eq:main-theorem-regime-finite-mut}
			u_{\mu_n}(t_n,x)
			=
			(\ii t_n)^{-3/2}
			\ee^{\frac{\ii\abs{x}^2}{2t_n}}
			\ee^{-\ii\cC[W](x/t_n)\log t_n}
			\ee^{-\ii\cB_L[W](x/t_n)}
			W(x/t_n)
			+
			o_{L_x^2}(1).
		\end{equation}
		
		If $\mu_nt_n\to\infty$, then
		\begin{equation}\label{eq:main-theorem-regime-large-mut}
			u_{\mu_n}(t_n,x)
			=
			(\ii t_n)^{-3/2}
			\ee^{\frac{\ii\abs{x}^2}{2t_n}}
			\ee^{-\ii\cC[W](x/t_n)\log(1/\mu_n)}
			\ee^{-\ii\cB_\infty[W](x/t_n)}
			W(x/t_n)
			+
			o_{L_x^2}(1).
		\end{equation}
	\end{theorem}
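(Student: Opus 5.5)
Our plan is the standard profile (Fourier-side) approach to modified scattering, run uniformly in $\mu\in[0,1]$, followed by an analysis of the phase identity \eqref{eq:frozen-phase-decomposition-preview} in the three regimes. Write $f_\mu(t)=\ee^{-\ii\frac t2\Delta}u_\mu(t)$ for the profile and use the factorization $\ee^{\ii\frac t2\Delta}=M(t)D(t)\mathcal F M(t)$, where $M(t)=\ee^{\ii\abs{x}^2/(2t)}$ and $D(t)$ is the $L^2$-dilation $(\ii t)^{-3/2}\phi(x/t)$. Then $u_\mu(t,x)=(\ii t)^{-3/2}\ee^{\ii\abs{x}^2/(2t)}\widehat{f_\mu}(t,x/t)+o_{L^2}(1)$, provided $\snorm{xf_\mu(t)}_{L^2}$ grows at most like $t^{\delta}$.

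\textbf{Step 1 (uniform bootstrap).} We prove global existence together with the bounds
\[
\snorm{u_\mu(t)}_{L^\infty}\lesssim\varepsilon\langle t\rangle^{-3/2},\qquad \snorm{\widehat{f_\mu}(t)}_{L^\infty}\lesssim\varepsilon,
\]
and $\Sigma_4$-type norms of $f_\mu$ growing like $\langle t\rangle^{C\varepsilon^2}$. All constants must be independent of $\mu$. The key point is that $0\le \widehat{V_\mu}(\xi)=4\pi/(\abs\xi^2+\mu^2)\le 4\pi/\abs\xi^2$ and $V_\mu\le V_0$ pointwise. Hence the estimates of Hayashi--Naumkin type for the Coulomb case carry over verbatim, since they only use the bounds $\abs{V_\mu}\le\abs{x}^{-1}$ and $\abs{\nabla^kV_\mu}\lesssim\abs{x}^{-1-k}$ uniformly.

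\textbf{Step 2 (ODE for $\widehat{f_\mu}$ and phase integration).} A stationary-phase expansion gives
\[
\ii\partial_t\widehat{f_\mu}(t,v)=\kappa\,t^{-1}\bigl(K_{\mu t}*\abs{\widehat{f_\mu}(t)}^2\bigr)(v)\,\widehat{f_\mu}(t,v)+R_\mu(t,v),\qquad \snorm{R_\mu(t)}_{L^2\cap L^\infty}\lesssim\varepsilon^3 t^{-1-\delta},
\]
uniformly in $\mu$. Here the rescaling $V_\mu(tz)=t^{-1}K_{\mu t}(z)$ produces exactly the kernel \eqref{eq:rescaled-Yukawa-kernel}. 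Since $\abs{\widehat{f_\mu}}$ is almost conserved, $\partial_t\abs{\widehat{f_\mu}}^2=O(t^{-1-\delta})$, the modulus converges to a limit $\abs{W_\mu}$. Setting $w_\mu(t)=\ee^{\ii\Phi_\mu[\widehat{f_\mu}\text{-modulus}]}\widehat{f_\mu}$, we obtain a Cauchy sequence in $L^2\cap L^\infty$. We then show that one can replace $\abs{\widehat f_\mu(s)}^2$ by $\abs{W_\mu}^2$ inside $\Phi_\mu$ with an $O(\varepsilon^2 t^{-\delta}\log t)$ error, which tends to $0$. This yields
\[
\widehat{f_\mu}(t)=\ee^{-\ii\Phi_\mu[W_\mu](t)}W_\mu+o(1)\quad\text{in }L^2\cap L^\infty,
\]
with the error uniform in $\mu$.

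Next we need $W_\mu\to W_0=:W$ as $\mu\to0$. The main obstacle is that the nonuniformity between $\mu\to0$ and $t\to\infty$ makes this nontrivial. We plan to control it by a Gronwall comparison of the two ODEs, using
\[
\abs{K_{\mu s}-K_0}\le \min\bigl(\mu s,\abs{z}^{-1}\bigr)\quad\text{at scale }\abs z\lesssim 1 .
\]
The difference contributes $\int_1^\infty s^{-1}\min(\mu s,1)\,ds\sim\mu\log(1/\mu)$ times the profile norms in modulus. The key structural fact is that phases do not affect the moduli equation at leading order, so $\abs{W_\mu}\to\abs{W_0}$. Controlling the phases of $W_\mu$ versus $W_0$ requires also the $\log$-growth bookkeeping. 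We expect to need a convergence rate $\snorm{W_\mu-W_0}\lesssim\varepsilon\mu^{\delta}$, which by uniformity of the $t$-remainders suffices, since the phase is $2\pi$-periodic and only the moduli enter the phase functionals.

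\textbf{Step 3 (the three regimes).} Along $(\mu_n,t_n)$ we combine Step 2 with identity \eqref{eq:frozen-phase-decomposition-preview} applied to $W$:
\[
\Phi_{\mu_n}[W](t_n)=\cC[W]\log t_n+\cB_{\mu_nt_n}[W]-\cB_{\mu_n}[W].
\]
We verify that $\cB_\mu[W]\to0$ in $L^\infty$, since $\abs{b_L(z)}\le L$. The three cases then follow.
\begin{itemize}
\item If $\mu_nt_n\to0$, then $\cB_{\mu_nt_n}\to0$, which gives \eqref{eq:main-theorem-regime-small-mut}.
\item If $\mu_nt_n\to L\in(0,\infty)$, continuity of $L\mapsto b_L$ gives \eqref{eq:main-theorem-regime-finite-mut}.
\item If $\mu_nt_n\to\infty$, we use $\int_0^L(\ee^{-r}-1)r^{-1}dr=-\gamma-\log L+O(\ee^{-L})$. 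This gives $b_L(z)=(-\gamma-\log\abs z-\log L)/\abs z+o(1/\abs z)$ locally uniformly. Hence $\cC\log t_n+\cB_{\mu_nt_n}=\cC\log(1/\mu_n)+\cB_\infty+o(1)$, using that $\abs{W}^2$ has enough decay and integrability (from the $\Sigma_4$ bounds) for the dominated convergence. This gives \eqref{eq:main-theorem-regime-large-mut}.
\end{itemize}
Finally, because $\Phi$ is real, replacing $\Phi_{\mu_n}[W_{\mu_n}]$ by $\Phi_{\mu_n}[W]$ costs $\snorm{\abs{W_{\mu_n}}^2-\abs W^2}\log t_n$ in the phase. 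This is the delicate point where the rate from Step 2, together with $\log t_n$ growth, must be balanced; we expect it to need the quantitative modulus comparison.
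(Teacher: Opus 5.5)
Your architecture matches the paper's. You use $\mu$-uniform Hayashi--Naumkin bounds from $0\le V_\mu\le V_0$ and $\abs{\nabla^kV_\mu}\lesssim\abs{x}^{-1-k}$, an integrable equation for a renormalized profile, and then the identity $\Phi_\mu[G](t)=\cC[G]\log t+\cB_{\mu t}[G]-\cB_\mu[G]$ in the three regimes. The paper uses the self-similar amplitude $a_\mu$, whose error $\frac{\ii}{2t^2}\Delta_va_\mu$ is exact, rather than a stationary-phase remainder, but that difference is cosmetic. Building the phase from the current modulus $\abs{\widehat{f_\mu}(t)}^2$ is a legitimate substitute for the paper's evolving phase $\Theta_\mu$ plus the correction $\Gamma_\mu^Y$. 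However, if your phase uses $\abs{\widehat{f_\mu}(s)}^2$ at each $s\le t$, then replacing it by $\abs{W_\mu}^2$ on $[1,t]$ gives an error that converges to a nonzero bounded function, not to $0$. That error must be absorbed into the profile, which is exactly the role of $\Gamma_\mu^Y$.

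The genuine gap is the convergence $W_\mu\to W_0$. A Gronwall comparison of the two ODEs over $[1,\infty)$ cannot work, because $\int_1^\infty s^{-1}\min(\mu s,1)\,\dd s=(1-\mu)+\int_{1/\mu}^\infty s^{-1}\,\dd s=\infty$, not $\sim\mu\log(1/\mu)$. This divergence is real: once $t\gg\mu^{-1}$, the phases of $\widehat{f_\mu}(t)$ and $\widehat{f_0}(t)$ separate by roughly $\cC[W]\log(\mu t)$, which is the content of the theorem. Knowing $\abs{W_\mu}\to\abs{W_0}$ is not enough either. Only moduli enter $\cC$, $\cB_L$, $\cB_\infty$, but the asymptotic formulas carry $W$ itself, and $\snorm{\ee^{-\ii\Phi}(W_{\mu_n}-W)}_{L^2}=\snorm{W_{\mu_n}-W}_{L^2}$. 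So the phase of $W_\mu$ must converge too, and $2\pi$-periodicity is irrelevant.

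The missing idea, as in \Cref{lem:quantitative-finite-time-bmu,lem:log-continuity-Zmu}, is a time cutoff on the renormalized profiles. On $[1,S]$, Gronwall with Lipschitz coefficient $O(\varepsilon^2s^{-1})$ together with $0\le K_0-K_{\mu s}\le\mu s$ bounds both the amplitude difference and the accumulated phase difference by $C\mu^\theta S^A$. Beyond $S$, use only the $\mu$-uniform tail bound for the renormalized profile, of size $C\varepsilon S^{-1+C\varepsilon^2}$, or $S^{-\delta}\log S$ in your normalization. Then choose $S=S(\mu)\to\infty$. The paper takes a power of $\log(1/\mu)$; a small negative power of $\mu$ would give your hoped-for rate $\mu^{\delta'}$.

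Your last paragraph also has a gap in the regime $\mu_nt_n\to\infty$. A replacement cost of size $\snorm{\abs{W_{\mu_n}}^2-\abs{W}^2}\log t_n$ cannot be beaten by any rate in $\mu_n$, since $\log t_n$ may be arbitrarily large compared with $\log(1/\mu_n)$ (e.g.\ $t_n=\ee^{1/\mu_n}$). You need the saturation of the screened phase. Since $\snorm{K_{\mu s}}_{L^1}=4\pi(\mu s)^{-2}$, the part $s\ge\mu^{-1}$ of $\Phi_\mu[F](t)-\Phi_\mu[G](t)$ is bounded uniformly in $t$, so the true cost carries the factor $1+\log(1/\mu_n)$ rather than $\log t_n$.

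The paper does the equivalent as follows. Write $\log t_n=\log(1/\mu_n)+\log L_n$ with $L_n=\mu_nt_n$. Use that $\cB_L+(\log L)\cC\to\cB_\infty$ at rate $L^{-2}$, uniformly on $L^\infty$-bounded families (\Cref{lem:uniform-large-L-limit}), and combine this with the continuity of $\cB_\infty$ (\Cref{lem:Binfty-functional-continuity}). In the other two regimes $\log t_n\le\log(1/\mu_n)+O(1)$, so your bookkeeping there is adequate once a rate for $W_\mu-W_0$ that beats $\log(1/\mu)$ is in hand.
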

	
	\begin{remark}
		\label{rem:main-profile-W}
		The profile $W_0$ in \Cref{thm:main-trichotomy} is the asymptotic one appearing in the modified scattering theory (for instance see \cite{HayashiNaumkin1998AJM}).  We will use $W_\mu$ for the frozen-profile normalized asymptotic profile constructed later in \Cref{sec:exact-renormalization-asymptotic-profiles}. The construction proceeds through the exact renormalized profile $Z_\mu$, followed by a finite phase correction.
	\end{remark}
	
	\begin{remark}
		\label{rem:main-noncommutation}
		The first regime shows that the usual Coulomb modified scattering phase is recovered only when $\mu_nt_n\to0$.
		The third regime shows that, if $\mu_nt_n\to\infty$, the logarithmic phase is no longer a function of $t_n$ but instead saturates at $\log(1/\mu_n)$.
		Thus the limits $\mu\to0$ and $t\to\infty$ do not commute.
	\end{remark}
	
	\begin{remark}
		\label{rem:main-interpretation-trichotomy}
		The length scale $\mu^{-1}$ is the screening length of the Yukawa interaction.
		The parameter $\mu t$ compares the observation scale $t$ with this screening length.
		The regime $\mu t\to0$ corresponds to observing the solution before the screening length is reached.
		The regime $\mu t\to L\in(0,\infty)$ corresponds to the transition scale.
		The regime $\mu t\to\infty$ corresponds to observing the solution beyond the screening length.
	\end{remark}
	
	\begin{remark}
		\label{rem:VH}
		The smallness and high regularity assumption $\norm{u_{\mathrm{in}}}_{\Sigma_4}\leq\varepsilon$ is used in this paper in order to apply the Hayashi--Naumkin weighted-profile method uniformly in $0\leq\mu\leq1$.
		It would be interesting to weaken this smallness and regularity requirement by combining the present argument with the wave packet methods of \cite{VanHoose2024Hartree,VanHoose2025Nonlocal}.
		We do not pursue this direction here.
	\end{remark}
	
	\subsection{Strategy of the proof}
	\label{subsec:strategy-proof}
	
	We briefly describe the structure of the proof of \Cref{thm:main-trichotomy}. The argument has three main parts. First, we obtain estimates which are uniform in the screening parameter $0\leq\mu\leq1$. Second, we identify and remove the exact long-range phase in self-similar variables. Finally, we pass from the exact renormalized dynamics to the frozen-profile asymptotics and analyze the resulting phase in the three regimes $\mu t\to0$, $\mu t\to L\in(0,\infty)$, and $\mu t\to\infty$.
	
	The starting point is the collection of uniform estimates in \Cref{sec:preliminary-estimates}. These include the energy-level $H^1$ bound, the estimates
	\[
	\snorm{u_\mu(t)}_{H_x^4}
	+
	\snorm{\abs{x}^4 f_\mu(t)}_{L_x^2}
	\leq
	C\varepsilon t^{C\varepsilon^2},
	\qquad
	f_\mu(t)=\ee^{-\frac{\ii t}{2}\Delta}u_\mu(t),
	\]
	and the sharp decay bound
	\[
	\norm{u_\mu(t)}_{L_x^\infty}
	\leq
	C\varepsilon t^{-3/2}.
	\]
	The Hayashi--Naumkin argument is adapted uniformly in $\mu$, using the elementary domination of the Yukawa kernel by the Coulomb kernel. We also provide the uniform potential estimates which will be used throughout the proof.
	
	We then pass, in \Cref{sec:rescaled-amplitude-frozen-phase}, to the self-similar amplitude
	\[
	a_\mu(t,v)
	:=
	(\ii t)^{3/2}
	\ee^{-\frac{\ii t}{2}\abs{v}^2}
	u_\mu(t,tv).
	\]
	This amplitude satisfies the exact equation
	\[
	\partial_t a_\mu
	=
	\frac{\ii}{2t^2}\Delta_v a_\mu
	-
	\frac{\ii\kappa}{t}
	\left(K_{\mu t}*\abs{a_\mu}^2\right)a_\mu.
	\]
	The term $t^{-2}\Delta_v a_\mu$ is integrable in time by the uniform weighted-profile estimate. Hence the only non-integrable contribution is the explicit long-range phase generated by
	\[
	\frac1t
	\left(K_{\mu t}*\abs{a_\mu}^2\right).
	\]
	At this stage we also prove the frozen phase identity
	\[
	\Phi_\mu[G](t)
	=
	\mathcal C[G]\log t
	+
	\mathcal B_{\mu t}[G]
	-
	\mathcal B_\mu[G],
	\]
	which is the algebraic source of the trichotomy.
	
	The next step, carried out in \Cref{sec:exact-renormalization-asymptotic-profiles}, is to remove the exact long-range phase. We define
	\[
	\Theta_\mu(t,v)
	:=
	\kappa
	\int_1^t
	\frac1s
	\left(K_{\mu s}*\abs{a_\mu(s)}^2\right)(v)
	\,\dd s
	\]
	and
	\[
	b_\mu(t,v)
	:=
	\ee^{\ii\Theta_\mu(t,v)}a_\mu(t,v).
	\]
	The exact cancellation gives
	\[
	\partial_t b_\mu(t)
	=
	\ee^{\ii\Theta_\mu(t)}
	\frac{\ii}{2t^2}\Delta_v a_\mu(t),
	\]
	which is integrable in time. Therefore $b_\mu(t)$ converges to a limiting profile $Z_\mu$, with a quantitative tail estimate. We then absorb the stationary Yukawa correction $\Gamma_\mu^Y$ into the profile and introduce the frozen-profile normalized state
	\[
	W_\mu
	:=
	\ee^{-\ii\Gamma_\mu^Y}Z_\mu.
	\]
	
	Having constructed $W_\mu$, we replace the exact evolving phase by the frozen phase generated by this limiting profile. This is done in \Cref{sec:frozen-profile-asymptotics} and yields the uniform asymptotic formula
	\[
	a_\mu(t)
	=
	\ee^{-\ii\Phi_\mu[W_\mu](t)}W_\mu
	+
	o_{L_v^2}(1).
	\]
	Returning to physical space gives the corresponding asymptotic formula for $u_\mu(t,x)$, still expressed in terms of $W_\mu$ and $\Phi_\mu[W_\mu]$.
	
	It remains to understand the dependence of $W_\mu$ and of the phase functionals on $\mu$. The profile continuity is proved in \Cref{sec:quantitative-continuity-profiles}. The key estimate is
	\[
	\log(1/\mu)
	\norm{W_\mu-W_0}_{H_v^1}
	\longrightarrow0
	\qquad
	\text{as }\mu\to0.
	\]
	This logarithmic rate is needed because the profile enters the Coulomb phase, which is multiplied by a logarithmic factor in the limiting regimes.
	
	The required mapping and continuity properties of the phase functionals $\mathcal C$, $\mathcal B_L$, and $\mathcal B_\infty$ are established in \Cref{sec:phase-functional-estimates}. In particular, we prove the uniform large-$L$ limit
	\[
	\mathcal B_L[G]
	+
	(\log L)\mathcal C[G]
	\longrightarrow
	\mathcal B_\infty[G]
	\qquad
	\text{as }L\to\infty,
	\]
	on the bounded profile class arising in the proof.
	
	The proof of the main theorem is then completed in \Cref{sec:proof-main-theorem}. Combining the frozen-profile asymptotic formula, the quantitative continuity $W_\mu\to W_0$, and the phase-functional estimates, we reduce the asymptotic analysis to the frozen identity
	\[
	\Phi_\mu[G](t)
	=
	\mathcal C[G]\log t
	+
	\mathcal B_{\mu t}[G]
	-
	\mathcal B_\mu[G].
	\]
	If $\mu t\to0$, the transition terms vanish and the Coulomb phase $\mathcal C[W]\log t$ remains. If $\mu t\to L\in(0,\infty)$, the finite correction $\mathcal B_L[W]$ appears. If $\mu t\to\infty$, the logarithmic time phase saturates at $\log(1/\mu)$, and the limiting correction $\mathcal B_\infty[W]$ appears. This gives the three asymptotic regimes and completes the proof of \Cref{thm:main-trichotomy}.
	
	\section{Preliminary estimates}
	\label{sec:preliminary-estimates}
	
	This section collects the estimates which are used throughout the proof and which are uniform in $0\leq\mu\leq1$.
	
	\begin{proposition}
		\label{prop:uniform-H1-bound-both-signs}
		Let $u_\mu$ be a solution of \eqref{eq:hartree-yukawa-coulomb}, then
		there exists $\varepsilon_0>0$ such that, if
		\[
		\norm{u_{\mathrm{in}}}_{H^1}\leq\varepsilon
		\leq
		\varepsilon_0,
		\]
		then the corresponding solution satisfies
		\begin{equation}\label{eq:uniform-H1-bound-both-signs}
			\sup_{0\leq\mu\leq1}
			\sup_{t\geq0}
			\norm{u_\mu(t)}_{H^1}
			\leq
			C\varepsilon.
		\end{equation}
	\end{proposition}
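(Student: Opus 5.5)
We will combine conservation of mass and energy with a Hardy-type bound on the potential energy, uniformly in $\mu$, and close by a continuity argument. Mass conservation gives $\norm{u_\mu(t)}_{L^2}=\norm{u_{\mathrm{in}}}_{L^2}\leq\varepsilon$ for all $t$. The energy
\[
E_\mu(u)=\frac12\norm{\nabla u}_{L^2}^2+\frac{\kappa}{2}\int_{\R^3}\bigl(V_\mu*\abs{u}^2\bigr)\abs{u}^2\dd x
\]
is also conserved. We take for granted the standard global well-posedness in $H^1$ for each fixed $\mu\in[0,1]$ (subcritical Hartree with a kernel dominated by the Coulomb kernel), together with these conservation laws.

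The key uniform estimate is on the potential term. Since $0\leq V_\mu\leq\abs{x}^{-1}$ pointwise for every $0\leq\mu\leq1$,
\[
0\leq\int\bigl(V_\mu*\abs{u}^2\bigr)\abs{u}^2\dd x\leq\int\bigl(\abs{x}^{-1}*\abs{u}^2\bigr)\abs{u}^2\dd x\leq\norm{\abs{x}^{-1}*\abs{u}^2}_{L^\infty}\norm{u}_{L^2}^2.
\]
By Hardy's inequality, $\sup_y\int\abs{x-y}^{-1}\abs{u(x)}^2\dd x\leq\norm{u}_{L^2}\norm{\abs{x-y}^{-1}u}_{L^2}\leq 2\norm{u}_{L^2}\norm{\nabla u}_{L^2}$. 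Hence
\[
\abs{\int\bigl(V_\mu*\abs{u}^2\bigr)\abs{u}^2\dd x}\leq 2\norm{u}_{L^2}^3\norm{\nabla u}_{L^2},
\]
with a constant independent of $\mu$. (Alternatively, Hardy--Littlewood--Sobolev and Sobolev give a cubic-in-$H^1$ bound, which is equally usable.)

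In the defocusing case $\kappa=+1$ the potential term is nonnegative. Energy conservation then gives
\[
\norm{\nabla u_\mu(t)}_{L^2}^2\leq 2E_\mu(u_{\mathrm{in}})\leq\norm{\nabla u_{\mathrm{in}}}_{L^2}^2+2\norm{u_{\mathrm{in}}}_{L^2}^3\norm{\nabla u_{\mathrm{in}}}_{L^2}\leq C\varepsilon^2.
\]
In the focusing case $\kappa=-1$, write $X(t)=\norm{\nabla u_\mu(t)}_{L^2}$. Conservation and the bound above give
\[
\tfrac12X(t)^2\leq E_\mu(u_{\mathrm{in}})+\varepsilon^3X(t)\leq C\varepsilon^2+\varepsilon^3X(t).
\]
By Young's inequality this yields $X(t)^2\leq C\varepsilon^2+\varepsilon^6$, so $X(t)\leq C\varepsilon$ directly. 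No bootstrap is needed, because the potential term is only linear in $X$ once the conserved mass is inserted; the smallness $\varepsilon\leq\varepsilon_0$ is used only to keep the constants clean.

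Combining the kinetic bound with mass conservation gives $\norm{u_\mu(t)}_{H^1}\leq C\varepsilon$ with $C$ independent of $\mu$ and $t$, which is \eqref{eq:uniform-H1-bound-both-signs}.

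The only delicate point is justifying energy conservation uniformly at the $\mu=0$ endpoint and for $H^1$ data. I would handle it by the standard regularization: approximate by smooth data, use persistence of regularity, and pass to the limit. This uses continuity of the potential energy functional in $H^1$, which follows from the same Hardy bound applied to differences.
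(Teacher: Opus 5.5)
Your proof is correct and follows the paper's argument: mass and energy conservation, the $\mu$-uniform bound $\int(V_\mu*\abs{u}^2)\abs{u}^2\dd x\leq C\norm{u}_{L^2}^3\norm{\nabla u}_{L^2}$ obtained from $0\leq V_\mu\leq\abs{x}^{-1}$, and Young's inequality. The differences are minor. You get that bound from Hardy's inequality rather than from Hardy--Littlewood--Sobolev, interpolation and Sobolev. Your energy $\frac12\norm{\nabla u}_{L^2}^2+\frac{\kappa}{2}\int(V_\mu*\abs{u}^2)\abs{u}^2\dd x$ is the correctly normalized conserved quantity for the equation with $-\frac12\Delta$; the paper's potential coefficient $\frac{\kappa}{4}$ is off by a factor of two, which does not affect the argument.
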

	
	\begin{proof}
		We prove the a priori estimate.
		The mass is conserved:
		\[
		\norm{u_\mu(t)}_{L^2}
		=
		\norm{u_{\mathrm{in}}}_{L^2}.
		\]
		The energy
		\[
		E_{\mu,\kappa}[u]
		:=
		\frac12\norm{\nabla u}_{L^2}^2
		+
		\frac{\kappa}{4}
		\int_{\R^3}\int_{\R^3}
		V_\mu(x-y)
		\abs{u(x)}^2
		\abs{u(y)}^2
		\,\dd x\,\dd y
		\]
		is also conserved.
		
		We first show a uniform bound on the potential energy.
		Since $0\leq V_\mu(x)\leq 1/{\abs{x}}$,
		the Hardy--Littlewood--Sobolev inequality, interpolation, and Sobolev embedding give
		\begin{equation}\label{eq:hartree-potential-energy-H1-subcritical}
			\begin{aligned}
				\iint
				V_\mu(x-y)
				\abs{u(x)}^2
				\abs{u(y)}^2
				\,\dd x\,\dd y
				\leq
				\snorm{u}_{L^2}^{2}
				\snorm{u}_{L^3}^2
				\leq
				C\snorm{u}_{L^2}^{3}
				\snorm{\nabla u}_{L^2}.
			\end{aligned}
		\end{equation}
		
		Let
		\[
		M:=\norm{u_{\mathrm{in}}}_{L^2},
		\qquad
		X(t):=\norm{\nabla u_\mu(t)}_{L^2}.
		\]
		Using conservation of energy and \eqref{eq:hartree-potential-energy-H1-subcritical}, we get
		\[
		X(t)^2
		\leq
		C\varepsilon^2
		+
		C\varepsilon^3 X(t).
		\]
		By Young's inequality, we obtain the desired uniform bound.
	\end{proof}
	
	\begin{proposition}
		\label{prop:uniform-Hartree-potential-bounds}
		For all $0\leq\mu\leq1$, one has
		\begin{equation}\label{eq:uniform-potential-Linfty-bound}
			\snorm{V_\mu*\abs{u}^2}_{L^\infty}
			\leq
			C
			\snorm{u}_{L^2}^{4/3}
			\snorm{u}_{L^\infty}^{2/3},
		\end{equation}
		and
		\begin{equation}\label{eq:uniform-gradient-potential-Linfty-bound}
			\snorm{\nabla V_\mu*\abs{u}^2}_{L^\infty}
			\leq
			C
			\snorm{u}_{L^2}^{2/3}
			\snorm{u}_{L^\infty}^{4/3}.
		\end{equation}
	\end{proposition}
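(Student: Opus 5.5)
The plan is to reduce both estimates to the Coulomb case through pointwise domination of the kernels, and then prove the Coulomb bounds by splitting each convolution integral at a radius $R$ and optimizing in $R$.

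For the domination, note that $0\le V_\mu(x)\le \abs{x}^{-1}$. Differentiating, one finds
\[
\nabla V_\mu(x)=-\ee^{-\mu\abs{x}}\Bigl(\frac{1}{\abs{x}^2}+\frac{\mu}{\abs{x}}\Bigr)\frac{x}{\abs{x}}.
\]
Since $s\ee^{-s}\le \ee^{-1}\le 1$ for $s=\mu\abs{x}\ge0$, we have $\mu\abs{x}^{-1}\ee^{-\mu\abs{x}}\le \abs{x}^{-2}$. This gives
\[
\abs{\nabla V_\mu(x)}\le \frac{2}{\abs{x}^2}
\]
uniformly in $0\le\mu\le1$; in fact the constraint $\mu\le1$ is not even needed here. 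It therefore suffices to bound $\int \abs{x-y}^{-\alpha}\abs{u(y)}^2\dd y$ for $\alpha=1,2$, uniformly in $x$.

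For a kernel $\abs{z}^{-\alpha}$ with $0<\alpha<3$, split the $y$-integral into the regions $\abs{x-y}<R$ and $\abs{x-y}\ge R$.

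On the inner region, bound $\abs{u}^2$ by $\snorm{u}_{L^\infty}^2$. Since $\alpha<3$, the kernel is integrable near the origin, and we get
\[
\int_{\abs{z}<R}\abs{z}^{-\alpha}\dd z=C_\alpha R^{3-\alpha}.
\]
This region therefore contributes at most $C R^{3-\alpha}\snorm{u}_{L^\infty}^2$.

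On the outer region, bound the kernel by $R^{-\alpha}$, so the contribution is at most $R^{-\alpha}\snorm{u}_{L^2}^2$.

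Combining the two pieces,
\[
\int \abs{x-y}^{-\alpha}\abs{u(y)}^2\dd y\le C\bigl(R^{3-\alpha}\snorm{u}_{L^\infty}^2+R^{-\alpha}\snorm{u}_{L^2}^2\bigr).
\]
Choose $R^3=\snorm{u}_{L^2}^2/\snorm{u}_{L^\infty}^2$ to balance the two terms. The bound then becomes
\[
C\snorm{u}_{L^2}^{2(3-\alpha)/3}\snorm{u}_{L^\infty}^{2\alpha/3}.
\]

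Specializing the exponent gives the two claims. With $\alpha=1$ we obtain $\snorm{u}_{L^2}^{4/3}\snorm{u}_{L^\infty}^{2/3}$, which is \eqref{eq:uniform-potential-Linfty-bound}. With $\alpha=2$ we obtain $\snorm{u}_{L^2}^{2/3}\snorm{u}_{L^\infty}^{4/3}$, which is \eqref{eq:uniform-gradient-potential-Linfty-bound}. The latter applies because $\abs{(\nabla V_\mu*\abs{u}^2)(x)}\le \int\abs{\nabla V_\mu(x-y)}\abs{u(y)}^2\dd y$.

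This step needs one justification: $\nabla(V_\mu*\abs{u}^2)=(\nabla V_\mu)*\abs{u}^2$. This holds in the distributional sense because $\nabla V_\mu$ is locally integrable, being $O(\abs{x}^{-2})$ in three dimensions. A standard density argument with $u\in L^2\cap L^\infty$ covers the general case. The trivial case $u=0$ is excluded when choosing $R$.

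There is no serious obstacle here. The only point that needs care is the uniform-in-$\mu$ gradient bound, and it is handled by the elementary inequality $s\ee^{-s}\le1$.
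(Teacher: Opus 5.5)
Your proposal is correct and follows the same route the paper sketches (and leaves to the reader): dominate $V_\mu\leq\abs{x}^{-1}$ and $\abs{\nabla V_\mu}\leq C\abs{x}^{-2}$ uniformly in $\mu$, split the convolution at radius $R$, and optimize at $R^3=\snorm{u}_{L^2}^2/\snorm{u}_{L^\infty}^2$. Your computation of $\nabla V_\mu$, the bound via $s\ee^{-s}\leq\ee^{-1}$, and the resulting exponents for $\alpha=1,2$ are all correct.
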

	These estimates are uniform in the screening parameter and will be used in the amplitude equation, the exact renormalization, and the continuity arguments. Their proof can be done by a standard optimization argument of integral regions via uniform bounds $V_\mu(x) \leq \frac1{\abs{x}}$ and $	\abs{\nabla V_\mu(x)} \leq	\frac{C}{\abs{x}^2}$. We leave the details to readers.
	
	\medskip
	
	\begin{proposition}
		\label{prop:uniform-HN-estimates}
		Under the assumptions of \Cref{thm:main-trichotomy}, one has
		\begin{equation}\label{eq:uniform-HN-Linfty-decay}
			\sup_{0\leq\mu\leq1}
			\norm{u_\mu(t)}_{L_x^\infty}
			\leq
			C\varepsilon t^{-3/2},
			\qquad
			t\geq1.
		\end{equation}
		Moreover,
		\begin{equation}\label{eq:uniform-HN-Sigma-growth}
			\sup_{0\leq\mu\leq1}
			\left(
			\snorm{u_\mu(t)}_{H_x^4}
			+
			\snorm{\abs{x}^4 f_\mu(t)}_{L_x^2}
			\right)
			\leq
			C\varepsilon t^{C\varepsilon^2},
			\qquad
			t\geq1,
		\end{equation}
		where
		\begin{equation}\label{eq:Schrodinger-profile-definition}
			f_\mu(t)
			:=
			\ee^{-\frac{\ii t}{2}\Delta}u_\mu(t).
		\end{equation}
		The constants are independent of $0\leq\mu\leq1$.
	\end{proposition}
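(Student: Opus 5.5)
We follow the Hayashi--Naumkin weighted-profile method in a bootstrap (continuity) argument, keeping every constant independent of $\mu$. The only input about the potential is the pointwise domination $0\le V_\mu\le\abs{x}^{-1}$, $\abs{\nabla V_\mu}\le C\abs{x}^{-2}$, together with \Cref{prop:uniform-Hartree-potential-bounds} and \Cref{prop:uniform-H1-bound-both-signs}. Local well-posedness in $\Sigma_4$ on $[0,1]$ with bounds uniform in $\mu$ is standard, since the nonlinearity is bounded in terms of the Coulomb one. We then fix $T>1$ and assume on $[1,T]$ the bootstrap hypothesis
\[
\norm{u_\mu(t)}_{L^\infty_x}\le 2C_1\varepsilon t^{-3/2},\qquad \snorm{u_\mu(t)}_{H^4}+\snorm{\abs{x}^4f_\mu(t)}_{L^2}\le 2C_1\varepsilon t^{C\varepsilon^2}.
\]
The goal is to recover both bounds with $C_1$ in place of $2C_1$.

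\emph{Energy-type estimates.} For $H^4$ we apply $\partial^\alpha$, $\abs{\alpha}\le4$, to the equation. For the weight we use the Galilean operator $J=x+\ii t\nabla$, which satisfies $\ee^{\frac{\ii t}{2}\Delta}x\ee^{-\frac{\ii t}{2}\Delta}=J$, so $\snorm{\abs{x}^4f_\mu}_{L^2}\sim\sum_{\abs{\alpha}\le4}\snorm{J^\alpha u_\mu}_{L^2}$. Since $J$ commutes with $\ii\partial_t+\frac12\Delta$ and acts on the real potential $V_\mu*\abs{u}^2$ through $J=\ii t\,\ee^{\frac{\ii\abs{x}^2}{2t}}\nabla\ee^{-\frac{\ii\abs{x}^2}{2t}}$, a Leibniz expansion gives
\[
\frac{d}{dt}\sum_{\abs{\alpha}\le4}\bigl(\snorm{\partial^\alpha u_\mu}_{L^2}+\snorm{J^\alpha u_\mu}_{L^2}\bigr)\lesssim \Bigl(\snorm{V_\mu*\abs{u}^2}_{W^{1,\infty}}+\text{lower-order terms}\Bigr)\,\mathcal X(t),
\]
where the top-order term with the real potential $V_\mu*\abs{u}^2$ cancels by skew-adjointness. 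The terms in which derivatives or $J$'s fall on $\abs{u}^2$ are handled by HLS with the kernel bound $\abs{x}^{-1}$ (uniform in $\mu$), by interpolation, and by $\snorm{u}_{L^\infty}\lesssim\varepsilon t^{-3/2}$. Via \Cref{prop:uniform-Hartree-potential-bounds} this yields a coefficient $C\varepsilon^2t^{-1}$. Gronwall then gives growth $t^{C\varepsilon^2}$, with constant independent of $\mu$ because only $\abs{x}^{-1}$ bounds were used.

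\emph{Sharp decay.} This is the main step. We write $u_\mu=\mathcal M\mathcal D\mathcal F\mathcal M f_\mu$ with $\mathcal M=\ee^{\ii\abs{x}^2/2t}$. The bound $\snorm{\abs{x}^2 f_\mu}_{L^2}\lesssim\varepsilon t^{C\varepsilon^2}$ gives
\[
\norm{u_\mu(t)-(\ii t)^{-3/2}\ee^{\ii\abs{x}^2/2t}\hat f_\mu(t,x/t)}_{L^\infty}\lesssim\varepsilon t^{-3/2-1/4+C\varepsilon^2}.
\]
It therefore suffices to bound $\snorm{\hat f_\mu(t)}_{L^\infty}$ uniformly. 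Computing the ODE for $\hat f_\mu(t,\xi)$ gives
\[
\ii\partial_t\hat f_\mu(t,\xi)=\kappa t^{-1}\bigl(K_{\mu t}*\abs{\hat f_\mu(t)}^2\bigr)(\xi)\,\hat f_\mu(t,\xi)+R(t,\xi),
\]
where the kernel rescaling $V_\mu(t\,\cdot)=t^{-1}K_{\mu t}$ is exactly what makes the principal term explicit. The remainder satisfies $\snorm{R}_{L^\infty}\lesssim\varepsilon^3t^{-1-\delta}$ for some $\delta>0$. This follows from the weighted bounds, using $H^2$-type control of $\hat f$ (from $\abs{x}^2f$) and Sobolev embedding for the commutator between $\mathcal M$ and the convolution, with $\abs{x}^{-1}$ and $\abs{x}^{-2}$ kernel bounds uniform in $\mu$. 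The principal term is real times $\hat f$, so $\partial_t\abs{\hat f_\mu}^2=2\,\mathrm{Im}(\bar{\hat f}R)$ and hence $\abs{\hat f_\mu(t,\xi)}\le\abs{\hat f_\mu(1,\xi)}+C\varepsilon^3$. This closes the $L^\infty$ bootstrap for $\varepsilon_0$ small.

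The main obstacle is proving the remainder estimate for $R$ uniformly in $\mu$: the rescaled kernel $K_{\mu t}$ varies with $t$. We handle this by never exploiting the exponential factor and estimating all convolution errors through $\abs{K_\lambda}\le\abs{z}^{-1}$ and $\abs{\nabla K_\lambda}\le C\abs{z}^{-2}$, uniformly in $\lambda\ge0$.
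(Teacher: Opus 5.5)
Your proposal is correct and follows essentially the same route as the paper. Both run the Hayashi--Naumkin bootstrap. The $\partial^\alpha$ and $J^\alpha$ energy estimates have coefficient $C\varepsilon^2t^{-1}$, which gives $t^{C\varepsilon^2}$ growth by Gr\"onwall. The sharp $L^\infty$ decay comes from the factorization $u_\mu=M(t)D(t)\widehat{f_\mu}$ plus a weighted-profile error, together with the ODE for $\widehat{f_\mu}$ whose resonant coefficient $t^{-1}K_{\mu t}*\abs{\widehat{f_\mu}}^2$ is real. All constants are made $\mu$-independent through $0\le K_\lambda\le K_0$ and $\abs{\nabla K_\lambda}\le C\abs{z}^{-2}$.

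Your modulus identity $\partial_t\abs{\widehat{f_\mu}}^2=2\,\mathrm{Im}(\overline{\widehat{f_\mu}}R)$ is the gauge-invariant form of the paper's ``phase-corrected'' Fourier-profile argument. Your explicit two-hypothesis bootstrap on $[1,T]$ is a somewhat cleaner organization of the same a priori argument.
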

	
	\begin{proof}
		We use the a priori argument of Hayashi--Naumkin \cite[Section 4]{HayashiNaumkin1998AJM}.
		
		Let
		\[
		U(t):=\ee^{\frac{\ii t}{2}\Delta},
		\qquad
		J(t):=x+\ii t\nabla
		=
		U(t)xU(-t).
		\]
		Then $J(t)$ commutes with the free Schr\"odinger operator
		\[
		L:=\ii\partial_t+\frac12\Delta.
		\]
		Furthermore,
		\[
		x^\alpha f_\mu(t)
		=
		U(-t)J^\alpha(t)u_\mu(t),
		\]
		and therefore
		\begin{equation}\label{eq:J-weight-equivalence}
			\snorm{\abs{x}^4 f_\mu(t)}_{L^2}
			\leq
			\sum_{\abs{\alpha}\leq4}
			\snorm{J^\alpha(t)u_\mu(t)}_{L^2}.
		\end{equation}
		
		The estimates in \cite[Section 4]{HayashiNaumkin1998AJM} are based on the Coulomb Hartree bounds for the operator $u\mapsto (\abs{x}^{-1}*\abs{u}^2)u$.
		They apply uniformly to the Yukawa kernels because
		\[
		0\leq V_\mu(x)
		=
		\frac{\ee^{-\mu\abs{x}}}{\abs{x}}
		\leq
		\frac1{\abs{x}},
		\qquad
		0\leq\mu\leq1,
		\]
		and, more generally, for every integer $m\geq0$,
		\begin{equation}\label{eq:Yukawa-derivative-Coulomb-domination}
			\abs{\nabla^m V_\mu(x)}
			\leq
			C_m\abs{x}^{-1-m},
			\qquad
			x\neq0,
			\qquad
			0\leq\mu\leq1.
		\end{equation}
		Each differentiated term has the form
		\[
		C\mu^\ell \ee^{-\mu\abs{x}}\abs{x}^{-1-m+\ell}
		\]
		with $0\leq\ell\leq m$, and this is bounded by $C_m\abs{x}^{-1-m}$ because $(\mu\abs{x})^\ell\ee^{-\mu\abs{x}}\leq C_\ell$.
		Thus all constants in the Hartree product estimates and commutator estimates of \cite[Section 4]{HayashiNaumkin1998AJM} may be chosen independently of $\mu$.
		
		Applying $\partial^\alpha$, $\abs{\alpha}\leq4$, to the equation and using the corresponding Hartree estimates gives
		\begin{equation}\label{eq:HN-H4-differential}
			\frac{\dd}{\dd t}
			\norm{u_\mu(t)}_{H^4}
			\leq
			C\norm{u_\mu(t)}_{L^\infty}^{2/3}
			\norm{u_\mu(t)}_{L^2}^{4/3}
			\norm{u_\mu(t)}_{H^4}.
		\end{equation}
		Similarly, since $J(t)$ commutes with $L$, the same argument applied to $J^\alpha(t)u_\mu$ gives
		\begin{equation}\label{eq:HN-J-differential}
			\frac{\dd}{\dd t}
			\sum_{\abs{\alpha}\leq4}
			\norm{J^\alpha(t)u_\mu(t)}_{L^2}
			\leq
			C\norm{u_\mu(t)}_{L^\infty}^{2/3}
			\norm{u_\mu(t)}_{L^2}^{4/3}
			\sum_{\abs{\alpha}\leq4}
			\norm{J^\alpha(t)u_\mu(t)}_{L^2}.
		\end{equation}
		Here the factor
		\[
		\norm{u_\mu(t)}_{L^\infty}^{2/3}
		\norm{u_\mu(t)}_{L^2}^{4/3}
		\]
		is the uniform bound for $\snorm{V_\mu*\abs{u_\mu}^2}_{L^\infty}$ in \Cref{prop:uniform-Hartree-potential-bounds}.
		
		The Hayashi--Naumkin bootstrap gives, uniformly in $0\leq\mu\leq1$,
		\begin{equation}\label{eq:HN-bootstrap-Linfty-input}
			\norm{u_\mu(t)}_{L^\infty}
			\leq
			C\varepsilon t^{-3/2},
			\qquad
			t\geq1.
		\end{equation}
		Substituting this into \eqref{eq:HN-H4-differential} and \eqref{eq:HN-J-differential}, and using mass conservation, we obtain
		\[
		\frac{\dd}{\dd t}
		\Bigl(
		\norm{u_\mu(t)}_{H^4}
		+
		\sum_{\abs{\alpha}\leq4}
		\norm{J^\alpha(t)u_\mu(t)}_{L^2}
		\Bigr)
		\leq
		C\varepsilon^2 t^{-1}
		\Bigl(
		\norm{u_\mu(t)}_{H^4}
		+
		\sum_{\abs{\alpha}\leq4}
		\norm{J^\alpha(t)u_\mu(t)}_{L^2}
		\Bigr).
		\]
		By Gr\"onwall's inequality,
		\[
		\norm{u_\mu(t)}_{H^4}
		+
		\sum_{\abs{\alpha}\leq4}
		\norm{J^\alpha(t)u_\mu(t)}_{L^2}
		\leq
		C
		\Bigl(
		\norm{u_\mu(1)}_{H^4}
		+
		\sum_{\abs{\alpha}\leq4}
		\norm{J^\alpha(1)u_\mu(1)}_{L^2}
		\Bigr)
		t^{C\varepsilon^2}.
		\]
		The local theory and the smallness of the initial datum in $\Sigma_4$ imply
		\[
		\sup_{0\leq\mu\leq1}
		\Bigl(
		\norm{u_\mu(1)}_{H^4}
		+
		\sum_{\abs{\alpha}\leq4}
		\norm{J^\alpha(1)u_\mu(1)}_{L^2}
		\Bigr)
		\leq
		C\varepsilon.
		\]
		Combining this with \eqref{eq:J-weight-equivalence} proves \eqref{eq:uniform-HN-Sigma-growth}.
		
		It remains to recall how \eqref{eq:HN-bootstrap-Linfty-input} is obtained.
		As in \cite[Section 4]{HayashiNaumkin1998AJM}, one writes
		\[
		u_\mu(t)
		=
		M(t)D(t)\widehat{f_\mu(t)}
		+
		M(t)D(t)\mathcal F\left((M(t)-1)f_\mu(t)\right),
		\]
		where
		\[
		M(t)\phi(x):=\ee^{\frac{\ii\abs{x}^2}{2t}}\phi(x),
		\qquad
		D(t)\phi(x):=(\ii t)^{-3/2}\phi(x/t).
		\]
		The second term is controlled by the weighted estimate for $f_\mu$.
		The first term is controlled by the phase-corrected Fourier-profile argument of Hayashi--Naumkin.
		In the present Yukawa case the resonant coefficient is
		\[
		K_{\mu t}*\abs{\widehat{f_\mu(t)}}^2,
		\qquad
		K_{\lambda}(v):=\frac{\ee^{-\lambda\abs{v}}}{\abs{v}},
		\]
		and all error terms are bounded by the same estimates as in the Coulomb case because $0\leq K_{\lambda}\leq K_0$.
		Thus
		\[
		\sup_{0\leq\mu\leq1}
		\snorm{\widehat{f_\mu(t)}}_{L^\infty}
		\leq
		C\varepsilon,
		\qquad
		t\geq1.
		\]
		Consequently,
		\[
		\snorm{u_\mu(t)}_{L^\infty}
		\leq
		Ct^{-3/2}
		\snorm{\widehat{f_\mu(t)}}_{L^\infty}
		+
		C\varepsilon t^{-3/2-a+C\varepsilon^2}
		\leq
		C\varepsilon t^{-3/2}.
		\]
		This proves \eqref{eq:uniform-HN-Linfty-decay} and closes the bootstrap.
	\end{proof}
	
	\section{Rescaled amplitude and frozen phase}
	\label{sec:rescaled-amplitude-frozen-phase}
	
	This section converts the equation into the exact equation for the rescaled amplitude and records the explicit frozen-phase identity.
	
	\subsection{Definition of the rescaled amplitude}
	\label{subsec:definition-rescaled-amplitude}
	
	For $t>0$, define the rescaled amplitude by
	\begin{equation}\label{eq:main-rescaled-amplitude-definition}
		a_\mu(t,v)
		:=
		(\ii t)^{3/2}
		\ee^{-\frac{\ii t}{2}\abs{v}^2}
		u_\mu(t,tv).
	\end{equation}
	Equivalently,
	\begin{equation}\label{eq:main-u-from-amplitude}
		u_\mu(t,x)
		=
		(\ii t)^{-3/2}
		\ee^{\frac{\ii\abs{x}^2}{2t}}
		a_\mu(t,x/t).
	\end{equation}
	The map $a\mapsto u$ in \eqref{eq:main-u-from-amplitude} is unitary from $L_v^2$ to $L_x^2$.
	
	\begin{corollary}
		\label{cor:weighted-profile-amplitude-consequences}
		The weighted-profile estimate implies the corresponding $v$-regularity control of the rescaled amplitude.
		In particular, for $\abs{\alpha}\leq4$,
		\begin{equation}\label{eq:amplitude-derivative-growth-bound}
			\sup_{0\leq\mu\leq1}
			\norm{\partial_v^\alpha a_\mu(t)}_{L_v^2}
			\leq
			C\varepsilon t^{C\varepsilon^2},
			\qquad
			t\geq1.
		\end{equation}
	\end{corollary}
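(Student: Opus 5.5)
The plan is to express $\partial_v^\alpha a_\mu$ through the weighted Fourier profile $x^\alpha f_\mu$, and then quote the $|x|^4$ bound in \eqref{eq:uniform-HN-Sigma-growth}.

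\emph{Step 1: a factorization of the amplitude.} Using the factorization $U(t)=M(t)D(t)\mathcal F M(t)$ recalled in the proof of \Cref{prop:uniform-HN-estimates}, with $D(t)\phi(x)=(\ii t)^{-3/2}\phi(x/t)$, we have
\[
u_\mu(t)=M(t)D(t)\mathcal F\bigl(M(t)f_\mu(t)\bigr).
\]
Inverting the modulation $M(t)$ and the dilation $D(t)$, the definition \eqref{eq:main-rescaled-amplitude-definition} gives exactly
\[
a_\mu(t)=\mathcal F\bigl(M(t)f_\mu(t)\bigr),
\qquad
M(t)\phi(x)=\ee^{\frac{\ii\abs{x}^2}{2t}}\phi(x),
\]
up to the sign and normalization convention of $\mathcal F$. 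I will fix that convention first so that the powers of $\ii$ cancel. Equivalently, one can write $\ee^{-\frac{\ii t}{2}\abs{v}^2}$ times $u_\mu(t,tv)$ and use that $\ee^{-\frac{\ii t}{2}\abs{v}^2}$ conjugates $J(t)$ into $t$ times a $v$-derivative; I prefer the Fourier form.

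\emph{Step 2: derivatives become weights.} Derivatives in $v$ of a Fourier transform correspond to multiplication by $x$ on the Fourier side. By Plancherel,
\[
\norm{\partial_v^\alpha a_\mu(t)}_{L_v^2}
=
\norm{x^\alpha \ee^{\frac{\ii\abs{x}^2}{2t}}f_\mu(t)}_{L_x^2}
=
\norm{x^\alpha f_\mu(t)}_{L_x^2},
\]
since $\abs{\ee^{\frac{\ii\abs{x}^2}{2t}}}=1$. Equivalently, $\partial_v^\alpha a_\mu$ is the rescaling of $J^\alpha(t)u_\mu(t)$, because $J(t)=M(t)\,\ii t\nabla\, M(-t)$.

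\emph{Step 3: weighted bound.} For $\abs{\alpha}\le4$, interpolation gives $\abs{x^\alpha}\le 1+\abs{x}^4$. Hence
\[
\norm{x^\alpha f_\mu(t)}_{L^2}
\le
\norm{f_\mu(t)}_{L^2}+\snorm{\abs{x}^4f_\mu(t)}_{L^2}.
\]
The first term equals $\norm{u_{\mathrm{in}}}_{L^2}\le\varepsilon$ by mass conservation and unitarity of $U(t)$. The second term is bounded by $C\varepsilon t^{C\varepsilon^2}$ by \eqref{eq:uniform-HN-Sigma-growth}, uniformly in $0\le\mu\le1$. This proves \eqref{eq:amplitude-derivative-growth-bound}.

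The only real obstacle is Step 1: the precise identity relating $a_\mu$ to $\mathcal F(M f_\mu)$, including the placement of the quadratic phase. It is essential that $M(t)$ sits \emph{inside} the Fourier transform, where it is harmless because it is unimodular. If it sat outside, $\partial_v$ would fall on $\ee^{-\frac{\ii t}{2}\abs{v}^2}$ and produce growing factors of $t$, so a careful check of this factorization is needed.
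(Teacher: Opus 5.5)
Your proof is correct and follows essentially the paper's route. The paper (see \Cref{rem:weighted-profile-controls-dispersive-error}) writes $a_\mu(t)=\ee^{-\frac{\ii}{2t}\Delta_v}\widehat{f_\mu(t)}$, which is the same identity as your $a_\mu(t)=\mathcal F\bigl(M(t)f_\mu(t)\bigr)$ because $\mathcal F M(t)\mathcal F^{-1}=\ee^{-\frac{\ii}{2t}\Delta}$; like you, it then converts $v$-derivatives into $x$-moments of $f_\mu$ and uses \eqref{eq:uniform-HN-Sigma-growth} together with mass conservation.
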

	
	\subsection{Derivation of the exact amplitude equation}
	\label{subsec:derivation-exact-amplitude-equation}
	
	Set $v=x/t$.
	A direct computation gives
	\begin{equation}\label{eq:linear-part-amplitude-computation}
		\left(\ii\partial_t+\frac12\Delta_x\right)u_\mu(t,x)
		=
		(\ii t)^{-3/2}
		\ee^{\frac{\ii\abs{x}^2}{2t}}
		\left(
		\ii\partial_t a_\mu(t,v)
		+
		\frac1{2t^2}\Delta_v a_\mu(t,v)
		\right).
	\end{equation}
	On the other hand,
	\[
	\abs{u_\mu(t,x)}^2
	=
	t^{-3}
	\abs{a_\mu(t,x/t)}^2.
	\]
	Changing variables $y=tw$, we obtain
	\begin{equation}\label{eq:rescaled-nonlinear-potential}
		\left(V_\mu*\abs{u_\mu(t)}^2\right)(tv)
		=
		\frac1t
		\left(K_{\mu t}*\abs{a_\mu(t)}^2\right)(v).
	\end{equation}
	Substituting \eqref{eq:linear-part-amplitude-computation} and \eqref{eq:rescaled-nonlinear-potential} into \eqref{eq:hartree-yukawa-coulomb}, we obtain the exact amplitude equation
	\begin{equation}\label{eq:exact-rescaled-amplitude-equation}
		\partial_t a_\mu(t,v)
		=
		\frac{\ii}{2t^2}\Delta_v a_\mu(t,v)
		-
		\frac{\ii\kappa}{t}
		\left(K_{\mu t}*\abs{a_\mu(t)}^2\right)(v)
		a_\mu(t,v).
	\end{equation}
	This identity is exact.
	In particular, the only parameter in the nonlinear kernel is the product $\mu t$.
	
	\subsection{The dispersive error}
	\label{subsec:dispersive-error}
	
	We isolate the integrable term in \eqref{eq:exact-rescaled-amplitude-equation} by writing
	\begin{equation}\label{eq:dispersive-error-definition}
		\cE_\mu^{\mathrm{disp}}(t,v)
		:=
		\frac{\ii}{2t^2}
		\Delta_v a_\mu(t,v).
	\end{equation}
	Then
	\begin{equation}\label{eq:amplitude-equation-with-dispersive-error}
		\partial_t a_\mu(t,v)
		=
		-
		\frac{\ii\kappa}{t}
		\left(K_{\mu t}*\abs{a_\mu(t)}^2\right)(v)
		a_\mu(t,v)
		+
		\cE_\mu^{\mathrm{disp}}(t,v).
	\end{equation}
	
	\begin{lemma}
		\label{lem:integrability-dispersive-error}
		Under the assumptions of \Cref{thm:main-trichotomy}, one has
		\begin{equation}\label{eq:dispersive-error-integrability-L2}
			\sup_{0\leq\mu\leq1}
			\int_1^\infty
			\snorm{\cE_\mu^{\mathrm{disp}}(t)}_{L_v^2}
			\,\dd t
			\leq
			C\varepsilon.
		\end{equation}
		Moreover,
		\begin{equation}\label{eq:dispersive-error-integrability-Linfty}
			\sup_{0\leq\mu\leq1}
			\int_1^\infty
			\snorm{\cE_\mu^{\mathrm{disp}}(t)}_{L_v^\infty}
			\,\dd t
			\leq
			C\varepsilon.
		\end{equation}
	\end{lemma}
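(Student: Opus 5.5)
The plan is to reduce both bounds to the amplitude derivative growth estimate \eqref{eq:amplitude-derivative-growth-bound} of \Cref{cor:weighted-profile-amplitude-consequences}, which is uniform in $0\leq\mu\leq1$.

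\textbf{The $L^2$ bound.} By definition, $\snorm{\cE_\mu^{\mathrm{disp}}(t)}_{L_v^2}=\frac{1}{2t^2}\snorm{\Delta_v a_\mu(t)}_{L_v^2}$. Since $\Delta_v$ is a sum of second derivatives $\partial_v^\alpha$ with $\abs{\alpha}=2\leq4$, the corollary gives
\[
\snorm{\cE_\mu^{\mathrm{disp}}(t)}_{L_v^2}\leq C\varepsilon\, t^{-2+C\varepsilon^2}.
\]
Choosing $\varepsilon_0$ so that $C\varepsilon_0^2\leq\frac12$, the right-hand side is integrable on $[1,\infty)$. Its integral is bounded by $C\varepsilon\int_1^\infty t^{-3/2}\dd t=2C\varepsilon$, uniformly in $\mu$. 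This proves \eqref{eq:dispersive-error-integrability-L2}.

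\textbf{The $L^\infty$ bound.} In three dimensions we use the Sobolev embedding $H^2(\R^3)\hookrightarrow L^\infty(\R^3)$ applied to $\Delta_v a_\mu(t)$. This gives
\[
\snorm{\Delta_v a_\mu(t)}_{L_v^\infty}\leq C\snorm{\Delta_v a_\mu(t)}_{H_v^2}\leq C\sum_{\abs{\alpha}\leq4}\snorm{\partial_v^\alpha a_\mu(t)}_{L_v^2}\leq C\varepsilon\, t^{C\varepsilon^2}.
\]
This is exactly why four derivatives are needed: two for the Laplacian and two for the embedding. The same time integration as in the $L^2$ case then yields \eqref{eq:dispersive-error-integrability-Linfty}.

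\textbf{The main step: justifying the corollary.} The only substantive point is the estimate \eqref{eq:amplitude-derivative-growth-bound} itself, which the paper states as a consequence of \Cref{prop:uniform-HN-estimates}. If it needs justification, I would argue as follows. From \eqref{eq:main-u-from-amplitude} and the factorization $U(t)=M(t)D(t)\mathcal F M(t)$, one has
\[
a_\mu(t)=\mathcal F\bigl(M(t)f_\mu(t)\bigr)
\]
up to a harmless constant unimodular factor (which depends on the normalization of $\mathcal F$). Hence
\[
\partial_v^\alpha a_\mu(t)=\mathcal F\bigl((-\ii x)^\alpha M(t) f_\mu(t)\bigr),
\]
and Plancherel gives
\[
\snorm{\partial_v^\alpha a_\mu(t)}_{L_v^2}=\snorm{x^\alpha f_\mu(t)}_{L_x^2}.
\]
Here $M(t)$ is a pure phase multiplier, so it does not affect the $L^2$ norm. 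This is at most $\snorm{\langle x\rangle^4 f_\mu(t)}_{L^2}$. By mass conservation and \eqref{eq:uniform-HN-Sigma-growth}, this is bounded by $C\varepsilon t^{C\varepsilon^2}$, uniformly in $\mu$.

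I expect no real obstacle beyond this point. The care needed is in keeping the constants independent of $\mu$, which is guaranteed because \Cref{prop:uniform-HN-estimates} is uniform, and in the smallness condition $C\varepsilon^2<1$ that makes $t^{-2+C\varepsilon^2}$ integrable.
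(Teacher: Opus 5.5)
Your proposal is correct and follows the paper's proof essentially line by line. The $L^2$ bound comes from \Cref{cor:weighted-profile-amplitude-consequences} with $\abs{\alpha}=2$ and the integrability of $t^{-2+C\varepsilon^2}$, and the $L^\infty$ bound from $H^2(\R^3)\hookrightarrow L^\infty(\R^3)$ applied to $\Delta_v a_\mu$, which uses four derivatives in total. Your added justification of the corollary, via $a_\mu(t)=\mathcal F(M(t)f_\mu(t))$ (unitary normalization of $\mathcal F$) and Plancherel, is also correct. It makes precise what the paper records only formally as $a_\mu(t)=\ee^{-\frac{\ii}{2t}\Delta_v}\widehat{f_\mu}(t)$.
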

	This lemma is the reason why the exact nonlinear phase captures the non-integrable part of the dynamics.
	
	\begin{proof}
		Recall that
		\[
		\cE_\mu^{\mathrm{disp}}(t,v)
		=
		\frac{\ii}{2t^2}\Delta_v a_\mu(t,v).
		\]
		By \Cref{cor:weighted-profile-amplitude-consequences}, with $\abs{\alpha}=2$, we have
		\[
		\sup_{0\leq\mu\leq1}
		\norm{\Delta_v a_\mu(t)}_{L_v^2}
		\leq
		C\varepsilon t^{C\varepsilon^2},
		\qquad
		t\geq1.
		\]
		Hence
		\[
		\sup_{0\leq\mu\leq1}
		\int_1^\infty
		\snorm{\cE_\mu^{\mathrm{disp}}(t)}_{L_v^2}
		\,\dd t
		\leq
		C
		\sup_{0\leq\mu\leq1}
		\int_1^\infty
		t^{-2}
		\norm{\Delta_v a_\mu(t)}_{L_v^2}
		\,\dd t
		\leq
		C\varepsilon
		\int_1^\infty
		t^{-2+C\varepsilon^2}
		\,\dd t.
		\]
		Choosing $\varepsilon_0>0$ sufficiently small so that $C\varepsilon^2<1/2$, the last integral is finite and bounded by an absolute constant.
		Hence
		\[
		\sup_{0\leq\mu\leq1}
		\int_1^\infty
		\snorm{\cE_\mu^{\mathrm{disp}}(t)}_{L_v^2}
		\,\dd t
		\leq
		C\varepsilon.
		\]
		
		We now prove the $L_v^\infty$ estimate.
		By Sobolev embedding $H_v^2(\R^3)\hookrightarrow L_v^\infty(\R^3)$,
		\[
		\norm{\Delta_v a_\mu(t)}_{L_v^\infty}
		\leq
		C
		\norm{\Delta_v a_\mu(t)}_{H_v^2}
		\leq
		C
		\norm{a_\mu(t)}_{H_v^4}.
		\]
		Using again \Cref{cor:weighted-profile-amplitude-consequences}, now for all $\abs{\alpha}\leq4$, we obtain
		\[
		\sup_{0\leq\mu\leq1}
		\norm{\Delta_v a_\mu(t)}_{L_v^\infty}
		\leq
		C\varepsilon t^{C\varepsilon^2},
		\qquad
		t\geq1.
		\]
		Therefore
		\[
		\begin{aligned}
			\sup_{0\leq\mu\leq1}
			\int_1^\infty
			\snorm{\cE_\mu^{\mathrm{disp}}(t)}_{L_v^\infty}
			\,\dd t
			&\leq
			C
			\sup_{0\leq\mu\leq1}
			\int_1^\infty
			t^{-2}
			\norm{\Delta_v a_\mu(t)}_{L_v^\infty}
			\,\dd t
			\\
			&\leq
			C\varepsilon
			\int_1^\infty
			t^{-2+C\varepsilon^2}
			\,\dd t
			\\
			&\leq
			C\varepsilon.
		\end{aligned}
		\]
		This proves both estimates.
	\end{proof}
	
	\begin{remark}
		\label{rem:weighted-profile-controls-dispersive-error}
		Formally,
		\[
		a_\mu(t)
		=
		\ee^{-\frac{\ii}{2t}\Delta_v}
		\widehat{f_\mu}(t).
		\]
		Hence derivatives in $v$ of $a_\mu$ are controlled by moments in $x$ of $f_\mu$.
		The estimate \eqref{eq:uniform-HN-Sigma-growth} therefore gives the integrability of $t^{-2}\Delta_v a_\mu(t)$.
	\end{remark}
	
	\begin{corollary}
		\label{cor:uniform-yukawa-potential-bounds}
		Let $\norm{u_{\mathrm{in}}}_{L^2}=\varepsilon$ small enough. 
		For all $\mu>0$,
		we have that
		\begin{equation}\label{eq:uniform-yukawa-potential-decay}
			\norm{V_\mu*|u(t)|^2}_{L^\infty}
			\leq
			C\varepsilon^2
			\min\left\{
			t^{-1},
			\mu^{-2}t^{-3}
			\right\}.
		\end{equation}
		The first estimate remains valid for $\mu=0$.
	\end{corollary}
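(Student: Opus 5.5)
The bound will come from two separate estimates for $V_\mu*\abs{u(t)}^2$ in $L^\infty$, which we then combine by taking the minimum. Throughout, the solution is assumed to satisfy the decay bound of \Cref{prop:uniform-HN-estimates}, $\norm{u(t)}_{L^\infty}\leq C\varepsilon t^{-3/2}$ for $t\geq1$, together with mass conservation $\norm{u(t)}_{L^2}=\varepsilon$. The statement only makes sense in this setting: for a general $L^2$-small datum no decay rate is available. It is also most natural for $t\geq1$, since for $t\leq1$ the factor $t^{-1}\geq1$ and one would instead appeal to the $H^1$ bound.

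\emph{First estimate: the $t^{-1}$ bound.} Insert the two facts above into \eqref{eq:uniform-potential-Linfty-bound} of \Cref{prop:uniform-Hartree-potential-bounds}:
\[
\snorm{V_\mu*\abs{u}^2}_{L^\infty}\leq C\varepsilon^{4/3}(\varepsilon t^{-3/2})^{2/3}=C\varepsilon^2t^{-1}.
\]
This step uses only $V_\mu\leq\abs{x}^{-1}$, so it holds equally for $\mu=0$, which gives the last sentence of the corollary.

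\emph{Second estimate: the $\mu^{-2}t^{-3}$ bound.} Use Young's inequality in the form $L^1*L^\infty$:
\[
\snorm{V_\mu*\abs{u}^2}_{L^\infty}\leq\snorm{V_\mu}_{L^1}\snorm{u}_{L^\infty}^2 .
\]
Integrating in polar coordinates,
\[
\snorm{V_\mu}_{L^1}=4\pi\int_0^\infty r\ee^{-\mu r}\dd r=4\pi\mu^{-2}.
\]
Combined with $\snorm{u}_{L^\infty}^2\leq C\varepsilon^2t^{-3}$, this yields $C\varepsilon^2\mu^{-2}t^{-3}$. Equivalently, one can work in rescaled variables via \eqref{eq:rescaled-nonlinear-potential}: there $\snorm{K_{\mu t}}_{L^1}=4\pi(\mu t)^{-2}$ and $\snorm{a}_{L^\infty}\lesssim\varepsilon$, and the factor $t^{-1}$ in \eqref{eq:rescaled-nonlinear-potential} again gives $\mu^{-2}t^{-3}$.

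Taking the minimum of the two estimates gives \eqref{eq:uniform-yukawa-potential-decay}. There is no serious obstacle here. The only point requiring care is that the constants are uniform in $\mu$. The first bound inherits uniformity from \Cref{prop:uniform-HN-estimates}. In the second, the $\mu$-dependence is exactly the explicit factor $\mu^{-2}$ coming from $\snorm{V_\mu}_{L^1}$, so no further uniformity is needed.
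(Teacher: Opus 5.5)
Your proof is correct and follows the paper's argument: the $t^{-1}$ bound comes from inserting mass conservation and the decay estimate of \Cref{prop:uniform-HN-estimates} into \eqref{eq:uniform-potential-Linfty-bound}, and the $\mu^{-2}t^{-3}$ bound comes from $\snorm{V_\mu}_{L^1}=4\pi\mu^{-2}$ together with $\snorm{u(t)}_{L^\infty}^2\leq C\varepsilon^2t^{-3}$. Your remark is also a fair clarification of the statement as written: the decay estimate, and hence the $\Sigma_4$ smallness assumption of \Cref{thm:main-trichotomy}, is implicitly required, and the bound is meant for $t\geq1$.
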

	
	\begin{proof}
		For each $x\in\R^3$, with mass conservation, \eqref{eq:uniform-HN-Linfty-decay}, \eqref{eq:uniform-H1-bound-both-signs}, and 
		\eqref{eq:uniform-potential-Linfty-bound}, 
		we have
		\[
		\norm{V_\mu*|u(t)|^2}_{L^\infty}
		\leq
		C\norm{u(t)}_{L^2}^{4/3}\norm{u(t)}_{L^\infty}^{2/3}
		\leq
		C\varepsilon^{4/3}(\varepsilon t^{-3/2})^{2/3}
		=
		C\varepsilon^2t^{-1}.
		\]
		and
		\[
		\bigl(V_\mu*|u(t)|^2\bigr)(x)
		\leq
		C\mu^{-2}\norm{u(t)}_{L^\infty}^2
		\leq
		C\mu^{-2}\varepsilon^{2}t^{-3}.
		\]
	\end{proof}
	
	\subsection{Frozen phase identity}
	\label{subsec:frozen-phase-identity}
	
	\begin{lemma}
		\label{lem:frozen-phase-decomposition}
		For every sufficiently regular profile $G$, one has
		\begin{equation}\label{eq:frozen-phase-decomposition}
			\Phi_\mu[G](t)
			=
			\cC[G]\log t
			+
			\cB_{\mu t}[G]
			-
			\cB_\mu[G].
		\end{equation}
	\end{lemma}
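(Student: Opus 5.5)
The plan is to prove the identity pointwise at the level of kernels and then integrate against $\abs{G(w)}^2$. Since
\[
K_{\mu s}(z)=\frac{\ee^{-\mu s\abs{z}}}{\abs{z}}=\frac1{\abs{z}}+\frac{\ee^{-\mu s\abs{z}}-1}{\abs{z}},
\]
the $s$-integral in the definition of $\Phi_\mu[G]$ splits into a Coulomb part and a correction part. The Coulomb part gives $\int_1^t s^{-1}\dd s\,(\abs{\cdot}^{-1}*\abs{G}^2)(v)=\log t\,\cC[G](v)/\kappa$.

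For the correction part, fix $z\neq0$ and substitute $r=\mu s$, so $\dd s/s=\dd r/r$. This gives
\[
\int_1^t\frac{\ee^{-\mu s\abs{z}}-1}{s\abs{z}}\dd s
=\frac1{\abs{z}}\int_\mu^{\mu t}\frac{\ee^{-r\abs{z}}-1}{r}\dd r
=b_{\mu t}(z)-b_\mu(z),
\]
using that $b_L$ is defined as an integral from $0$ to $L$. This integral converges at $r=0$ because $\abs{\ee^{-r\abs{z}}-1}/r\le\abs{z}$. When $\mu=0$ both sides vanish, which is consistent since $b_0=0$.

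Multiplying by $\kappa\abs{G(w)}^2$ with $z=v-w$ and integrating in $w$ then yields \eqref{eq:frozen-phase-decomposition}, provided we can interchange the $s$- and $w$-integrals. This justification is the only nontrivial point. I would use Tonelli on absolute values. The integrand $s^{-1}K_{\mu s}(v-w)\abs{G(w)}^2$ is nonnegative and bounded by $s^{-1}\abs{v-w}^{-1}\abs{G(w)}^2$, which is integrable on $[1,t]\times\R^3$ whenever $G\in L^2\cap L^\infty$, since then $\abs{\cdot}^{-1}*\abs{G}^2\in L^\infty$ by the local/far splitting behind \Cref{prop:uniform-Hartree-potential-bounds}.

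It remains to check that each of $\cC[G]$, $\cB_{\mu t}[G]$ and $\cB_\mu[G]$ is finite separately, so that the subtraction is legitimate. This follows from the bound
\[
\abs{b_L(z)}\le\frac{1}{\abs{z}}\int_0^L\min\{\abs{z},r^{-1}\}\dd r\le\frac{C\bigl(1+\log_+(L\abs{z})\bigr)}{\abs{z}},
\]
and the logarithmic growth is harmless against $\abs{G}^2\in L^1\cap L^\infty$. Alternatively, finiteness of $\cB_{\mu t}[G]-\cB_\mu[G]$ follows directly from the identity, since the correction integrand $(\ee^{-\mu s\abs{z}}-1)/(s\abs{z})$ has constant sign and is bounded by $1/(s\abs{z})$ on $[1,t]$. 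Here "sufficiently regular" means $G\in L^2\cap L^\infty$.
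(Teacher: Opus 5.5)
Your proposal is correct and follows essentially the same route as the paper: you split $K_{\mu s}(z)=\abs{z}^{-1}+(\ee^{-\mu s\abs{z}}-1)/\abs{z}$, integrate the Coulomb part to get $\cC[G]\log t$, and substitute $r=\mu s$ in the correction to obtain $b_{\mu t}(z)-b_\mu(z)$. The paper leaves several details implicit that you supply: the Tonelli justification for exchanging the $s$- and $w$-integrals, the separate check of the case $\mu=0$, and the finiteness of each $\cB_L[G]$ (which also follows at once from the simpler bound $\abs{b_L(z)}\leq L$ used in \Cref{lem:BL-functional-continuity}).
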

	
	\begin{proof}
		By definition,
		\[
		\Phi_\mu[G](t,v)
		=
		\kappa
		\int_{\R^3}
		\left[
		\int_1^t
		\frac1s
		\frac{\ee^{-\mu s\abs{v-w}}}{\abs{v-w}}
		\,\dd s
		\right]
		\abs{G(w)}^2
		\,\dd w.
		\]
		For $z=v-w$, write
		\[
		\int_1^t
		\frac1s
		\frac{\ee^{-\mu s\abs{z}}}{\abs{z}}
		\,\dd s
		=
		\frac{\log t}{\abs{z}}
		+
		\frac1{\abs{z}}
		\int_1^t
		\frac{\ee^{-\mu s\abs{z}}-1}{s}
		\,\dd s.
		\]
		Changing variables $r=\mu s$ in the second integral gives
		\[
		\frac1{\abs{z}}
		\int_\mu^{\mu t}
		\frac{\ee^{-r\abs{z}}-1}{r}
		\,\dd r
		=
		b_{\mu t}(z)-b_\mu(z).
		\]
		This proves \eqref{eq:frozen-phase-decomposition}.
	\end{proof}
	
	\section{Exact renormalization and asymptotic profiles}
	\label{sec:exact-renormalization-asymptotic-profiles}
	
	This section constructs the exact renormalized profile $Z_\mu$, proves the tail bound, defines the Yukawa correction $\Gamma_\mu^Y$, and constructs the corrected asymptotic profile $W_\mu$.
	
	\subsection{The exact renormalized amplitude}
	\label{subsec:exact-renormalized-amplitude}
	
	\begin{definition}[Exact nonlinear phase and renormalized amplitude]
		\label{def:exact-renormalization}
		Define
		\begin{equation}\label{eq:exact-nonlinear-phase-definition}
			\Theta_\mu(t,v)
			:=
			\kappa
			\int_1^t
			\frac1s
			\bigl(K_{\mu s}*\abs{a_\mu(s)}^2\bigr)(v)
			\,\dd s.
		\end{equation}
		and
		\begin{equation}\label{eq:renormalized-amplitude-definition}
			b_\mu(t,v)
			:=
			\ee^{\ii\Theta_\mu(t,v)}a_\mu(t,v).
		\end{equation}
	\end{definition}
	This removes the exact long-range phase generated by the evolving density $\abs{a_\mu(s)}^2$.
	
	\begin{proposition}
		\label{prop:exact-renormalized-profile}
		Under the assumptions of \Cref{thm:main-trichotomy}, there exists
		\[
		Z_\mu\in L^2(\R^3)\cap L^\infty(\R^3)
		\]
		such that
		\begin{equation}\label{eq:exact-profile-convergence}
			b_\mu(t)
			\to
			Z_\mu
		\end{equation}
		in $L_v^2\cap L_v^\infty$ as $t\to\infty$.
		Moreover, the convergence has the tail estimate
		\begin{equation}\label{eq:exact-profile-tail-bound}
			\norm{b_\mu(t)-Z_\mu}_{L_v^2\cap L_v^\infty}
			\leq
			C\varepsilon t^{-1+C\varepsilon^2},
			\qquad
			t\geq1,
		\end{equation}
		with $C$ independent of $0\leq\mu\leq1$.
	\end{proposition}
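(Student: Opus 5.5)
The plan is to differentiate $b_\mu$ in time, show that the derivative is integrable in $L^2_v\cap L^\infty_v$ with an explicit rate, and obtain $Z_\mu$ as the Cauchy limit. First we check that $\Theta_\mu(t,v)$ is real-valued and differentiable in $t$. Its integrand $\kappa s^{-1}(K_{\mu s}*\abs{a_\mu(s)}^2)(v)$ is real and bounded, since by \eqref{eq:rescaled-nonlinear-potential} and \Cref{cor:uniform-yukawa-potential-bounds} it equals $\kappa (V_{\mu}*\abs{u_\mu(s)}^2)(sv)$, which is bounded by $C\varepsilon^2 s^{-1}$. It is also continuous in $s$ by the local theory. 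Hence $\partial_t\Theta_\mu(t,v)=\kappa t^{-1}(K_{\mu t}*\abs{a_\mu(t)}^2)(v)$, and $\abs{\ee^{\ii\Theta_\mu}}=1$. Combining this with \eqref{eq:amplitude-equation-with-dispersive-error}, the nonlinear terms cancel exactly:
\[
\partial_t b_\mu(t,v)=\ee^{\ii\Theta_\mu(t,v)}\Bigl(\ii\partial_t\Theta_\mu\, a_\mu+\partial_t a_\mu\Bigr)=\ee^{\ii\Theta_\mu(t,v)}\,\cE_\mu^{\mathrm{disp}}(t,v).
\]
This identity needs only enough regularity of $a_\mu$, which is supplied by \Cref{cor:weighted-profile-amplitude-consequences}. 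It can be justified pointwise in $v$, or in the distributional sense in $t$ with values in $L^2_v$.

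Second, since the phase factor has modulus one, for $1\le t<t'$ we get
\[
\norm{b_\mu(t')-b_\mu(t)}_{L^2_v\cap L^\infty_v}\le\int_t^{t'}\snorm{\cE_\mu^{\mathrm{disp}}(s)}_{L^2_v\cap L^\infty_v}\dd s .
\]
We then rerun the proof of \Cref{lem:integrability-dispersive-error} on the interval $[t,\infty)$ instead of $[1,\infty)$. The bound $\snorm{\cE_\mu^{\mathrm{disp}}(s)}_{L^2\cap L^\infty}\le C\varepsilon s^{-2+C\varepsilon^2}$ comes from \Cref{cor:weighted-profile-amplitude-consequences} for the $L^2$ part, and from Sobolev embedding $H^2_v\hookrightarrow L^\infty_v$ applied to $\Delta_v a_\mu$ with $\abs{\alpha}\le4$ for the $L^\infty$ part. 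Integrating gives
\[
\int_t^\infty C\varepsilon s^{-2+C\varepsilon^2}\dd s\le C\varepsilon t^{-1+C\varepsilon^2},
\]
provided $C\varepsilon_0^2<1/2$. So $(b_\mu(t))_{t\ge1}$ is Cauchy in the Banach space $L^2_v\cap L^\infty_v$. Its limit $Z_\mu$ exists and lies in that space. Letting $t'\to\infty$ yields \eqref{eq:exact-profile-tail-bound}, and all constants are uniform in $\mu$ because every input is uniform in $0\le\mu\le1$.

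The main obstacle is the rigorous justification of the differentiation step. One must show that $t\mapsto\Theta_\mu(t)$ is $C^1$ in $t$ with values in $L^\infty_v$, and that $b_\mu$ is absolutely continuous with values in $L^2_v$, so that the fundamental theorem of calculus applies. I would handle this by checking that $t\mapsto\abs{a_\mu(t)}^2$ is continuous into $L^1_v\cap L^\infty_v$, which follows from the continuity $u_\mu\in C([1,\infty);H^4)$. The continuity of $s\mapsto K_{\mu s}$ in the convolution, uniformly bounded by the Coulomb kernel, then gives continuity of the integrand via the bound \eqref{eq:uniform-potential-Linfty-bound}. After that, the product rule in $L^2_v$ is legitimate, since $a_\mu\in C^1([1,\infty);L^2_v)$ by the equation and the $H^4$ bound.
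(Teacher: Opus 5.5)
Your proposal is correct and follows essentially the same route as the paper. Both arguments use the exact cancellation $\partial_t b_\mu=\ee^{\ii\Theta_\mu}\cE_\mu^{\mathrm{disp}}$, the unimodularity of the phase, the pointwise bound $\snorm{\cE_\mu^{\mathrm{disp}}(s)}_{L^2_v\cap L^\infty_v}\le C\varepsilon s^{-2+C\varepsilon^2}$ from the proof of \Cref{lem:integrability-dispersive-error}, and integration over $[t,\infty)$ to get both the Cauchy property and the tail estimate. Your only addition is the justification of the time differentiation and of the fundamental theorem of calculus, which the paper carries out only formally.
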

	
	\begin{proof}
		We first recall the exact amplitude equation, including the sign parameter $\kappa$:
		\[
		\partial_t a_\mu(t,v)
		=
		\frac{\ii}{2t^2}\Delta_v a_\mu(t,v)
		-
		\frac{\ii\kappa}{t}
		\left(K_{\mu t}*\abs{a_\mu(t)}^2\right)(v)
		a_\mu(t,v).
		\]
		Thus, with
		\[
		\cE_\mu^{\mathrm{disp}}(t,v)
		:=
		\frac{\ii}{2t^2}\Delta_v a_\mu(t,v),
		\]
		we may write
		\[
		\partial_t a_\mu(t,v)
		=
		-\frac{\ii\kappa}{t}
		\left(K_{\mu t}*\abs{a_\mu(t)}^2\right)(v)
		a_\mu(t,v)
		+
		\cE_\mu^{\mathrm{disp}}(t,v).
		\]
		Using \eqref{eq:exact-nonlinear-phase-definition}
		and \eqref{eq:renormalized-amplitude-definition}, we obtain
		\[
		\begin{aligned}
			\partial_t b_\mu(t,v)
			&=
			\ii(\partial_t\Theta_\mu)
			\ee^{\ii\Theta_\mu}a_\mu
			+
			\ee^{\ii\Theta_\mu}\partial_t a_\mu
			\\
			&=
			\ee^{\ii\Theta_\mu}
			\left[
			\frac{\ii\kappa}{t}
			\bigl(K_{\mu t}*\abs{a_\mu(t)}^2\bigr)a_\mu
			-
			\frac{\ii\kappa}{t}
			\bigl(K_{\mu t}*\abs{a_\mu(t)}^2\bigr)a_\mu
			+
			\cE_\mu^{\mathrm{disp}}(t)
			\right]
			\\
			&=
			\ee^{\ii\Theta_\mu(t)}
			\cE_\mu^{\mathrm{disp}}(t).
		\end{aligned}
		\]
		Because $\Theta_\mu$ is real-valued, $\abs{\ee^{\ii\Theta_\mu}}=1$. Hence, for $p=2,\infty$,
		\[
		\snorm{\partial_t b_\mu(t)}_{L_v^p}
		\leq
		\snorm{\cE_\mu^{\mathrm{disp}}(t)}_{L_v^p}.
		\]
		By \Cref{lem:integrability-dispersive-error},
		\[
		\sup_{0\leq\mu\leq1}
		\int_1^\infty
		\norm{\partial_t b_\mu(t)}_{L_v^2\cap L_v^\infty}
		\,\dd t
		\leq
		C\varepsilon.
		\]
		Therefore $b_\mu(t)$ is Cauchy in $L_v^2\cap L_v^\infty$ as $t\to\infty$.
		Consequently, there exists
		\[
		Z_\mu\in L_v^2\cap L_v^\infty
		\]
		such that
		\[
		b_\mu(t)\to Z_\mu
		\qquad
		\text{in }L_v^2\cap L_v^\infty.
		\]
		
		It remains to prove the tail estimate.
		For $T>t\geq1$, we have
		\[
		b_\mu(T)-b_\mu(t)
		=
		\int_t^T
		\ee^{\ii\Theta_\mu(s)}
		\cE_\mu^{\mathrm{disp}}(s)
		\,\dd s.
		\]
		Hence, for $p=2,\infty$,
		\[
		\norm{b_\mu(T)-b_\mu(t)}_{L_v^p}
		\leq
		\int_t^T
		\snorm{\cE_\mu^{\mathrm{disp}}(s)}_{L_v^p}
		\,\dd s.
		\]
		Letting $T\to\infty$, we obtain
		\[
		\norm{Z_\mu-b_\mu(t)}_{L_v^p}
		\leq
		\int_t^\infty
		\snorm{\cE_\mu^{\mathrm{disp}}(s)}_{L_v^p}
		\,\dd s.
		\]
		By the proof of \Cref{lem:integrability-dispersive-error}, for $p=2,\infty$,
		\[
		\snorm{\cE_\mu^{\mathrm{disp}}(s)}_{L_v^p}
		\leq
		C\varepsilon s^{-2+C\varepsilon^2},
		\qquad
		s\geq1,
		\]
		uniformly in $0\leq\mu\leq1$.
		Therefore
		\[
		\norm{Z_\mu-b_\mu(t)}_{L_v^p}
		\leq
		C\varepsilon
		\int_t^\infty
		s^{-2+C\varepsilon^2}
		\,\dd s.
		\]
		Choosing $\varepsilon_0>0$ small enough so that $C\varepsilon^2<1/2$, we get
		\[
		\int_t^\infty
		s^{-2+C\varepsilon^2}
		\,\dd s
		\leq
		Ct^{-1+C\varepsilon^2}.
		\]
		Thus
		\[
		\norm{Z_\mu-b_\mu(t)}_{L_v^p}
		\leq
		C\varepsilon t^{-1+C\varepsilon^2},
		\qquad
		p=2,\infty.
		\]
		Combining the two estimates gives
		\[
		\norm{Z_\mu-b_\mu(t)}_{L_v^2\cap L_v^\infty}
		\leq
		C\varepsilon t^{-1+C\varepsilon^2}.
		\]
		Equivalently,
		\[
		\norm{b_\mu(t)-Z_\mu}_{L_v^2\cap L_v^\infty}
		\leq
		C\varepsilon t^{-1+C\varepsilon^2}.
		\]
		The constants are uniform in $0\leq\mu\leq1$.
	\end{proof}
	
	\begin{proposition}
		\label{prop:Zmu-tail-bound}
		Under the assumptions of \Cref{thm:main-trichotomy}, the exact renormalized profile satisfies
		\[
		Z_\mu\in H_v^1(\R^3)
		\]
		and obeys the uniform tail estimate
		\begin{equation}\label{eq:Zmu-tail-H1-bound}
			\norm{Z_\mu-b_\mu(t)}_{H_v^1}
			\leq
			C\varepsilon t^{-1+C\varepsilon^2},
			\qquad
			t\geq1,
			\qquad
			0\leq\mu\leq1.
		\end{equation}
	\end{proposition}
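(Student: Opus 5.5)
The plan is to repeat the Cauchy argument of \Cref{prop:exact-renormalized-profile} one derivative higher, now in $H_v^1$. Since the $L_v^2$ part of \eqref{eq:Zmu-tail-H1-bound} is already contained in \eqref{eq:exact-profile-tail-bound}, it suffices to show that $\nabla_v b_\mu(t)$ is Cauchy in $L_v^2$ with an integrable time derivative. Differentiating the identity $\partial_t b_\mu=\ee^{\ii\Theta_\mu}\cE_\mu^{\mathrm{disp}}$ obtained in the proof of \Cref{prop:exact-renormalized-profile} gives
\[
\partial_t\nabla_v b_\mu(t)
=
\ee^{\ii\Theta_\mu(t)}
\left(
\ii\nabla_v\Theta_\mu(t)\,\cE_\mu^{\mathrm{disp}}(t)
+
\nabla_v\cE_\mu^{\mathrm{disp}}(t)
\right).
\]
Hence
\[
\norm{\partial_t\nabla_v b_\mu(t)}_{L_v^2}
\leq
\norm{\nabla_v\Theta_\mu(t)}_{L_v^\infty}\norm{\cE_\mu^{\mathrm{disp}}(t)}_{L_v^2}
+
\norm{\nabla_v\cE_\mu^{\mathrm{disp}}(t)}_{L_v^2}.
\]

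The second term is handled exactly as in \Cref{lem:integrability-dispersive-error}. Since $\nabla_v\cE_\mu^{\mathrm{disp}}=\frac{\ii}{2t^2}\nabla_v\Delta_v a_\mu$, \Cref{cor:weighted-profile-amplitude-consequences} with $\abs{\alpha}=3$ gives the bound $C\varepsilon t^{-2+C\varepsilon^2}$, uniformly in $\mu$.

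The main point, and the step I expect to be the real obstacle, is controlling the phase gradient $\nabla_v\Theta_\mu$. I would first record the amplitude bounds
\[
\norm{a_\mu(s)}_{L_v^2}=\norm{u_{\mathrm{in}}}_{L^2}\leq\varepsilon,
\qquad
\norm{a_\mu(s)}_{L_v^\infty}=s^{3/2}\norm{u_\mu(s)}_{L_x^\infty}\leq C\varepsilon,
\]
where the second uses \eqref{eq:uniform-HN-Linfty-decay}. Next, the rescaled kernels satisfy $\abs{\nabla K_\lambda(z)}\leq C\abs{z}^{-2}$ uniformly in $\lambda\geq0$; this is the $m=1$ case of \eqref{eq:Yukawa-derivative-Coulomb-domination}, since $(\lambda\abs{z})\ee^{-\lambda\abs{z}}$ is bounded. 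Therefore the argument behind \eqref{eq:uniform-gradient-potential-Linfty-bound} applies to $K_{\mu s}$ with constants independent of $\mu s$, and yields
\[
\norm{\nabla K_{\mu s}*\abs{a_\mu(s)}^2}_{L_v^\infty}\leq C\varepsilon^2.
\]
Integrating in $s$ against $s^{-1}$ gives the logarithmic growth
\[
\norm{\nabla_v\Theta_\mu(t)}_{L_v^\infty}\leq C\varepsilon^2\log t .
\]
This growth cannot be removed, since it is precisely the long-range phase, but it is harmless against the $t^{-2}$ decay of $\cE_\mu^{\mathrm{disp}}$. Using $\varepsilon^2\log t\leq t^{\varepsilon^2}$ for $t\geq1$, which follows from $\log t\leq\varepsilon^{-2}t^{\varepsilon^2}$, the first term is bounded by $C\varepsilon t^{-2+C\varepsilon^2}$ after enlarging $C$.

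Combining the two terms,
\[
\norm{\partial_t\nabla_v b_\mu(t)}_{L_v^2}\leq C\varepsilon t^{-2+C\varepsilon^2}
\]
uniformly in $0\leq\mu\leq1$. Choosing $\varepsilon_0$ so that $C\varepsilon^2<1/2$ makes this integrable on $[1,\infty)$. Thus $\nabla_v b_\mu(t)$ is Cauchy in $L_v^2$, and $b_\mu(t)$ converges in $H_v^1$. Since $b_\mu(t)\to Z_\mu$ in $L_v^2$ by \Cref{prop:exact-renormalized-profile}, the $H_v^1$ limit must be $Z_\mu$, so $Z_\mu\in H_v^1$. Finally, for $T>t$,
\[
\norm{\nabla_v b_\mu(T)-\nabla_v b_\mu(t)}_{L_v^2}\leq\int_t^T C\varepsilon s^{-2+C\varepsilon^2}\dd s .
\]
Letting $T\to\infty$ gives the bound $C\varepsilon t^{-1+C\varepsilon^2}$. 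Adding the $L_v^2$ estimate \eqref{eq:exact-profile-tail-bound} yields \eqref{eq:Zmu-tail-H1-bound}, with all constants independent of $\mu$.
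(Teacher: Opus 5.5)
Your proposal is correct and follows essentially the same route as the paper. Like the paper, you differentiate $\partial_t b_\mu=\ee^{\ii\Theta_\mu}\cE_\mu^{\mathrm{disp}}$ in $v$, bound $\nabla_v\cE_\mu^{\mathrm{disp}}$ through the $H^3_v$ amplitude bound, bound $\nabla_v\Theta_\mu$ by $C\varepsilon^2\log t$ via the uniform gradient Hartree estimate, absorb the logarithm into $t^{C\varepsilon^2}$, and conclude with the Cauchy and tail argument (your explicit inequality $\varepsilon^2\log t\leq t^{\varepsilon^2}$ simply makes precise the paper's ``increase the constant in the exponent'' step).
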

	This estimate is used to compare the exact renormalized profile with frozen-profile asymptotics and to prove continuity as $\mu\to0$.
	
	\begin{proof}
		Recall that
		\[
		b_\mu(t,v)
		=
		\ee^{\ii\Theta_\mu(t,v)}a_\mu(t,v),
		\]
		where, with $\Theta_\mu$ in \eqref{eq:exact-nonlinear-phase-definition}.
		By the exact amplitude equation, the long-range nonlinear term is cancelled by this phase, and hence
		\[
		\partial_t b_\mu(t)
		=
		\ee^{\ii\Theta_\mu(t)}
		\cE_\mu^{\mathrm{disp}}(t),
		\]
		where
		\[
		\cE_\mu^{\mathrm{disp}}(t)
		=
		\frac{\ii}{2t^2}\Delta_v a_\mu(t).
		\]
		Therefore
		\[
		\norm{\partial_t b_\mu(t)}_{L_v^2}
		\leq
		\snorm{\cE_\mu^{\mathrm{disp}}(t)}_{L_v^2}.
		\]
		
		We now estimate one derivative. Differentiating the identity for $\partial_t b_\mu$, we get
		\[
		\nabla_v\partial_t b_\mu(t)
		=
		\ee^{\ii\Theta_\mu(t)}
		\nabla_v\cE_\mu^{\mathrm{disp}}(t)
		+
		\ii
		\ee^{\ii\Theta_\mu(t)}
		\left(\nabla_v\Theta_\mu(t)\right)
		\cE_\mu^{\mathrm{disp}}(t).
		\]
		Thus
		\begin{equation}\label{eq:bt-H1-derivative-bound-preliminary}
			\norm{\partial_t b_\mu(t)}_{H_v^1}
			\leq
			C
			\snorm{\cE_\mu^{\mathrm{disp}}(t)}_{H_v^1}
			+
			C
			\norm{\nabla_v\Theta_\mu(t)}_{L_v^\infty}
			\snorm{\cE_\mu^{\mathrm{disp}}(t)}_{L_v^2}.
		\end{equation}
		
		By definition of $\cE_\mu^{\mathrm{disp}}$,
		\[
		\snorm{\cE_\mu^{\mathrm{disp}}(t)}_{H_v^1}
		\leq
		Ct^{-2}
		\norm{a_\mu(t)}_{H_v^3}.
		\]
		Using \Cref{cor:weighted-profile-amplitude-consequences}, we obtain
		\begin{equation}\label{eq:disp-error-H1-pointwise}
			\snorm{\cE_\mu^{\mathrm{disp}}(t)}_{H_v^1}
			\leq
			C\varepsilon t^{-2+C\varepsilon^2},
			\qquad
			t\geq1.
		\end{equation}
		
		It remains to bound $\nabla_v\Theta_\mu(t)$. Since
		\[
		\nabla_v\Theta_\mu(t)
		=
		\kappa
		\int_1^t
		\frac1s
		\nabla K_{\mu s}*\abs{a_\mu(s)}^2
		\,\dd s,
		\]
		the uniform gradient Hartree estimate gives
		\[
		\snorm{\nabla K_{\mu s}*\abs{a_\mu(s)}^2}_{L_v^\infty}
		\leq
		C
		\snorm{a_\mu(s)}_{L_v^2}^{2/3}
		\snorm{a_\mu(s)}_{L_v^\infty}^{4/3}.
		\]
		By unitarity,
		\[
		\norm{a_\mu(s)}_{L_v^2}
		=
		\norm{u_\mu(s)}_{L_x^2}
		\leq
		\varepsilon.
		\]
		Moreover, using the $L^\infty$ decay estimate,
		\[
		\norm{a_\mu(s)}_{L_v^\infty}
		=
		s^{3/2}\norm{u_\mu(s)}_{L_x^\infty}
		\leq
		C\varepsilon.
		\]
		Therefore
		\[
		\snorm{\nabla K_{\mu s}*\abs{a_\mu(s)}^2}_{L_v^\infty}
		\leq
		C\varepsilon^2,
		\]
		uniformly in $0\leq\mu\leq1$. Hence
		\[
		\norm{\nabla_v\Theta_\mu(t)}_{L_v^\infty}
		\leq
		C\varepsilon^2\log t.
		\]
		Together with
		\[
		\snorm{\cE_\mu^{\mathrm{disp}}(t)}_{L_v^2}
		\leq
		C\varepsilon t^{-2+C\varepsilon^2},
		\]
		this gives
		\[
		\norm{\nabla_v\Theta_\mu(t)}_{L_v^\infty}
		\snorm{\cE_\mu^{\mathrm{disp}}(t)}_{L_v^2}
		\leq
		C\varepsilon^3(\log t)t^{-2+C\varepsilon^2}.
		\]
		After increasing the implicit constant in the exponent and choosing $\varepsilon_0>0$ sufficiently small, this is bounded by
		\[
		C\varepsilon t^{-2+C\varepsilon^2}.
		\]
		Combining this with \eqref{eq:bt-H1-derivative-bound-preliminary} and \eqref{eq:disp-error-H1-pointwise}, we obtain
		\[
		\norm{\partial_t b_\mu(t)}_{H_v^1}
		\leq
		C\varepsilon t^{-2+C\varepsilon^2},
		\qquad
		t\geq1,
		\]
		uniformly in $0\leq\mu\leq1$.
		
		Therefore $b_\mu(t)$ is Cauchy in $H_v^1$ as $t\to\infty$. Denote its limit by $Z_\mu$. This limit agrees with the $L_v^2\cap L_v^\infty$ limit constructed in \Cref{prop:exact-renormalized-profile}. In particular,
		\[
		Z_\mu\in H_v^1.
		\]
		Finally,
		\[
		Z_\mu-b_\mu(t)
		=
		\int_t^\infty
		\partial_s b_\mu(s)
		\,\dd s
		\]
		in $H_v^1$, and hence
		\[
		\norm{Z_\mu-b_\mu(t)}_{H_v^1}
		\leq
		\int_t^\infty
		\norm{\partial_s b_\mu(s)}_{H_v^1}
		\,\dd s
		\leq
		C\varepsilon
		\int_t^\infty
		s^{-2+C\varepsilon^2}
		\,\dd s
		\leq
		C\varepsilon t^{-1+C\varepsilon^2}.
		\]
		This proves \eqref{eq:Zmu-tail-H1-bound}.
	\end{proof}
	
	\subsection{The Yukawa correction and the profile $W_\mu$}
	\label{subsec:yukawa-correction-profile-Wmu}
	
	The exact profile $Z_\mu$ is not yet the profile which appears in the explicit frozen phase.
	One must absorb the finite phase error produced by replacing $\abs{a_\mu(s)}^2$ by $\abs{Z_\mu}^2$.
	
	\begin{definition}[Yukawa correction and $W_\mu$]
		\label{def:yukawa-correction-Wmu}
		Define the Yukawa phase correction by
		\begin{equation}\label{eq:GammaY-definition}
			\Gamma_\mu^Y(v)
			:=
			\kappa
			\int_1^\infty
			\frac1s
			\left(
			K_{\mu s}*
			\Bigl(
			\abs{a_\mu(s)}^2
			-
			\abs{Z_\mu}^2
			\Bigr)
			\right)(v)
			\,\dd s.
		\end{equation}
		Then define the frozen-profile normalized asymptotic profile by
		\begin{equation}\label{eq:Wmu-definition}
			W_\mu
			=
			\ee^{-\ii\Gamma_\mu^Y}Z_\mu.
		\end{equation}
	\end{definition}
	
	The integral is absolutely convergent in $L_v^\infty$ by the tail estimate for $b_\mu(t)-Z_\mu$ and the convolution bound \eqref{eq:K-convolution-L1-Linfty-bound}.
	
	\begin{remark}
		The correction $\Gamma_\mu^Y$ converts the exact profile $Z_\mu$, obtained from the evolving density $\abs{a_\mu(s)}^2$, into the frozen-profile normalization used in \Cref{prop:frozen-profile-asymptotics}.
	\end{remark}

	\section{Frozen-profile asymptotics}
	\label{sec:frozen-profile-asymptotics}
	
	This section replaces the exact evolving phase by the frozen phase generated by $W_\mu$.
	
	\begin{proposition}
		\label{prop:frozen-profile-asymptotics}
		Under the assumptions of \Cref{thm:main-trichotomy}, one has
		\begin{equation}\label{eq:frozen-profile-asymptotic-quantitative}
			\sup_{0\leq\mu\leq1}
			\norm{
				a_\mu(t)
				-
				\ee^{-\ii\Phi_\mu[W_\mu](t)}W_\mu
			}_{L_v^2}
			\leq
			C\varepsilon t^{-1+C\varepsilon^2},
			\qquad
			t\geq1.
		\end{equation}
		In particular, for each fixed $0\leq\mu\leq1$,
		\begin{equation}\label{eq:frozen-profile-asymptotic}
			\norm{
				a_\mu(t)
				-
				\ee^{-\ii\Phi_\mu[W_\mu](t)}W_\mu
			}_{L_v^2}
			\longrightarrow0
			\qquad
			\text{as }t\to\infty.
		\end{equation}
		Moreover, the convergence is uniform enough to be used along sequences $\mu_n\to0$ and $t_n\to\infty$.
	\end{proposition}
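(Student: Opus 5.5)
We start from the identity $a_\mu(t)=\ee^{-\ii\Theta_\mu(t)}b_\mu(t)$ and compare the two factors with $\ee^{-\ii\Phi_\mu[W_\mu](t)}$ and $W_\mu$. The key algebraic observation is that $\Phi_\mu$ depends on its argument only through $\abs{\cdot}^2$, and $\abs{W_\mu}=\abs{Z_\mu}$. Hence $\Phi_\mu[W_\mu](t)=\Phi_\mu[Z_\mu](t)$. Splitting the exact phase accordingly gives
\[
\Theta_\mu(t)
=
\Phi_\mu[Z_\mu](t)
+
\Gamma_\mu^Y
-
R_\mu(t),
\qquad
R_\mu(t,v)
:=
\kappa\int_t^\infty\frac1s\Bigl(K_{\mu s}*\bigl(\abs{a_\mu(s)}^2-\abs{Z_\mu}^2\bigr)\Bigr)(v)\,\dd s .
\]
Substituting, and using $W_\mu=\ee^{-\ii\Gamma_\mu^Y}Z_\mu$, we obtain
\[
a_\mu(t)-\ee^{-\ii\Phi_\mu[W_\mu](t)}W_\mu
=
\ee^{-\ii(\Phi_\mu[Z_\mu](t)+\Gamma_\mu^Y)}
\Bigl(\ee^{\ii R_\mu(t)}b_\mu(t)-Z_\mu\Bigr).
\]
Since the prefactor is unimodular, the left side is bounded in $L^2_v$ by
\[
\norm{b_\mu(t)-Z_\mu}_{L_v^2}+\norm{R_\mu(t)}_{L_v^\infty}\norm{b_\mu(t)}_{L_v^2}.
\]

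The first term is at most $C\varepsilon t^{-1+C\varepsilon^2}$ by \Cref{prop:exact-renormalized-profile}. For the second, $\norm{b_\mu(t)}_{L^2_v}=\norm{a_\mu(t)}_{L^2_v}\le\varepsilon$, so it suffices to show
\[
\norm{R_\mu(t)}_{L^\infty_v}\le C\varepsilon t^{-1+C\varepsilon^2}
\]
uniformly in $\mu$. This is also the step that justifies the absolute convergence of the definition of $\Gamma_\mu^Y$.

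Since $\abs{a_\mu(s)}=\abs{b_\mu(s)}$, we write
\[
\abs{a_\mu(s)}^2-\abs{Z_\mu}^2=\mathrm{Re}\bigl((b_\mu(s)-Z_\mu)\overline{(b_\mu(s)+Z_\mu)}\bigr)=:\rho_\mu(s).
\]
Using $0\le K_{\mu s}\le\abs{z}^{-1}$, we split the convolution at $\abs{z}=1$. This gives
\[
\norm{K_{\mu s}*\rho}_{L^\infty}\le C\bigl(\norm{\rho}_{L^\infty}+\norm{\rho}_{L^1}\bigr).
\]
This is presumably the convolution bound \eqref{eq:K-convolution-L1-Linfty-bound} referred to in the paper; alternatively, the interpolation bound of \Cref{prop:uniform-Hartree-potential-bounds} can be used. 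By Cauchy--Schwarz and Hölder,
\[
\norm{\rho_\mu(s)}_{L^1\cap L^\infty}
\le
\norm{b_\mu(s)-Z_\mu}_{L^2\cap L^\infty}
\bigl(\norm{b_\mu(s)}_{L^2\cap L^\infty}+\norm{Z_\mu}_{L^2\cap L^\infty}\bigr)
\le
C\varepsilon\cdot C\varepsilon s^{-1+C\varepsilon^2}.
\]
Here we use $\norm{b_\mu(s)}_{L^\infty}=\norm{a_\mu(s)}_{L^\infty}\le C\varepsilon$ from \eqref{eq:uniform-HN-Linfty-decay}, together with the tail bound \eqref{eq:exact-profile-tail-bound}. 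Integrating against $s^{-1}\,\dd s$ over $[t,\infty)$ then gives
\[
\norm{R_\mu(t)}_{L^\infty}\le C\varepsilon^2t^{-1+C\varepsilon^2}.
\]
Finally we use $\abs{\ee^{\ii R}-1}\le\abs{R}$, and all constants are independent of $\mu$.

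The only delicate point is the uniform-in-$\mu$ control of $K_{\mu s}*\rho$ without losing logarithms, which is exactly what the domination $K_\lambda\le K_0$ provides. With that in hand, \eqref{eq:frozen-profile-asymptotic-quantitative} follows, and \eqref{eq:frozen-profile-asymptotic} is immediate. The uniformity in $\mu$ is what makes the bound applicable along sequences $(\mu_n,t_n)$.
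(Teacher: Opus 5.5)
Your proposal is correct and follows the paper's proof essentially line by line. It uses the same decomposition $\Theta_\mu(t)=\Phi_\mu[Z_\mu](t)+\Gamma_\mu^Y-R_\mu(t)$, the identity $\Phi_\mu[W_\mu]=\Phi_\mu[Z_\mu]$ from $\abs{W_\mu}=\abs{Z_\mu}$, and the tail bound $\norm{R_\mu(t)}_{L^\infty}\le C\varepsilon^2t^{-1+C\varepsilon^2}$ from the $L^1\cap L^\infty$ control of $\abs{b_\mu(s)}^2-\abs{Z_\mu}^2$; your splitting of the kernel at $\abs{z}=1$ is just the $R=1$ case of the paper's optimized bound \eqref{eq:K-convolution-L1-Linfty-bound} and works equally well.
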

	This proposition is the master asymptotic formula before the three regimes are separated.
	
	\begin{proof}
		We use the exact renormalized amplitude
		\[
		b_\mu(t)
		=
		\ee^{\ii\Theta_\mu(t)}a_\mu(t),
		\qquad
		b_\mu(t)\to Z_\mu
		\]
		from \Cref{prop:exact-renormalized-profile}. Since $\Theta_\mu$ in \eqref{eq:Theta-frozen-Gamma-tail-decomposition} is real-valued, we have
		\[
		\abs{a_\mu(t)}
		=
		\abs{b_\mu(t)}.
		\]
		Decomposing the density as
		\[
		\abs{a_\mu(s)}^2
		=
		\abs{Z_\mu}^2
		+
		\left(
		\abs{a_\mu(s)}^2
		-
		\abs{Z_\mu}^2
		\right)
		\]
		and using \eqref{eq:GammaY-definition}, we have
		\begin{equation}\label{eq:Theta-frozen-Gamma-tail-decomposition}
			\Theta_\mu(t)
			=
			\Phi_\mu[Z_\mu](t)
			+
			\Gamma_\mu^Y
			-
			R_\mu(t),
		\end{equation}
		where the remaining tail is
		\begin{equation}\label{eq:Rmu-tail-definition}
			R_\mu(t,v)
			:=
			\kappa
			\int_t^\infty
			\frac1s
			\left(
			K_{\mu s}*
			\left(
			\abs{a_\mu(s)}^2
			-
			\abs{Z_\mu}^2
			\right)
			\right)(v)
			\,\dd s.
		\end{equation}
		
		We first estimate $R_\mu(t)$. We use the elementary convolution bound
		\begin{equation}\label{eq:K-convolution-L1-Linfty-bound}
			\norm{K_{\lambda}*h}_{L^\infty}
			\leq
			C\norm{h}_{L^1}^{2/3}
			\norm{h}_{L^\infty}^{1/3},
			\qquad
			\lambda\geq0.
		\end{equation}
		Since $K_\lambda(z)\leq\abs{z}^{-1}$, for every $R>0$,
		\[
		\abs{(K_\lambda*h)(v)}
		\leq
		C R^2\norm{h}_{L^\infty}
		+
		R^{-1}\norm{h}_{L^1},
		\]
		and choosing $R =(\norm{h}_{L^1}/\norm{h}_{L^\infty})^{1/3}$, we obtain \eqref{eq:K-convolution-L1-Linfty-bound}.
		
		Let
		\[
		h_\mu(s)
		:=
		\abs{a_\mu(s)}^2-\abs{Z_\mu}^2.
		\]
		Since $\abs{a_\mu(s)}=\abs{b_\mu(s)}$, we have
		\[
		h_\mu(s)
		=
		\abs{b_\mu(s)}^2-\abs{Z_\mu}^2.
		\]
		Hence, by using H\"older inequality, 
		\[
		\norm{h_\mu(s)}_{L^1}
		\leq
		\left(
		\norm{b_\mu(s)}_{L^2}
		+
		\norm{Z_\mu}_{L^2}
		\right)
		\norm{b_\mu(s)-Z_\mu}_{L^2},
		\]
		and
		\[
		\norm{h_\mu(s)}_{L^\infty}
		\leq
		\left(
		\norm{b_\mu(s)}_{L^\infty}
		+
		\norm{Z_\mu}_{L^\infty}
		\right)
		\norm{b_\mu(s)-Z_\mu}_{L^\infty}.
		\]
		By \Cref{prop:exact-renormalized-profile},
		\[
		\norm{b_\mu(s)-Z_\mu}_{L^2\cap L^\infty}
		\leq
		C\varepsilon s^{-1+C\varepsilon^2},
		\]
		and together with the uniform $L^2$ and $L^\infty$ bounds,
		\[
		\norm{b_\mu(s)}_{L^2\cap L^\infty}
		+
		\norm{Z_\mu}_{L^2\cap L^\infty}
		\leq
		C\varepsilon.
		\]
		Hence
		\[
		\norm{h_\mu(s)}_{L^1}
		+
		\norm{h_\mu(s)}_{L^\infty}
		\leq
		C\varepsilon^2s^{-1+C\varepsilon^2}.
		\]
		Using \eqref{eq:K-convolution-L1-Linfty-bound}, we obtain
		\[
		\norm{K_{\mu s}*h_\mu(s)}_{L^\infty}
		\leq
		C\varepsilon^2s^{-1+C\varepsilon^2}.
		\]
		Thus
		\begin{equation}
			\norm{R_\mu(t)}_{L^\infty}
			\leq
			C
			\int_t^\infty
			\frac1s
			\norm{K_{\mu s}*h_\mu(s)}_{L^\infty}
			\,\dd s
			\leq
			C\varepsilon^2
			\int_t^\infty
			s^{-2+C\varepsilon^2}
			\,\dd s
			\leq
			C\varepsilon^2t^{-1+C\varepsilon^2}.
			\label{eq:Rmu-tail-Linfty-bound}
		\end{equation}
		
		Now, since $\abs{W_\mu}=\abs{Z_\mu}$,
		\begin{equation}\label{eq:Phi-Wmu-equals-Phi-Zmu}
			\Phi_\mu[W_\mu](t)
			=
			\Phi_\mu[Z_\mu](t).
		\end{equation}
		Using $a_\mu(t)=\ee^{-\ii\Theta_\mu(t)}b_\mu(t)$, \eqref{eq:Theta-frozen-Gamma-tail-decomposition}, and \eqref{eq:Phi-Wmu-equals-Phi-Zmu}, we get
		\[
		a_\mu(t)
		=
		\ee^{-\ii\Theta_\mu(t)}b_\mu(t)
		=
		\ee^{-\ii\Phi_\mu[Z_\mu](t)}
		\ee^{-\ii\Gamma_\mu^Y}
		\ee^{\ii R_\mu(t)}
		b_\mu(t)
		=
		\ee^{-\ii\Phi_\mu[W_\mu](t)}
		\ee^{\ii R_\mu(t)}
		\ee^{-\ii\Gamma_\mu^Y}
		b_\mu(t).
		\]
		On the other hand,
		\[
		\ee^{-\ii\Phi_\mu[W_\mu](t)}W_\mu
		=
		\ee^{-\ii\Phi_\mu[W_\mu](t)}
		\ee^{-\ii\Gamma_\mu^Y}
		Z_\mu.
		\]
		Since multiplication by a complex exponential of real phase is unitary on $L^2$, we obtain
		\[
		\norm{
			a_\mu(t)
			-
			\ee^{-\ii\Phi_\mu[W_\mu](t)}W_\mu
		}_{L^2}
		=
		\norm{
			\ee^{\ii R_\mu(t)}b_\mu(t)
			-
			Z_\mu
		}_{L^2}
		\leq
		\norm{b_\mu(t)-Z_\mu}_{L^2}
		+
		\norm{
			\left(
			\ee^{\ii R_\mu(t)}-1
			\right)
			b_\mu(t)
		}_{L^2}.
		\]
		Using $\abs{\ee^{\ii r}-1}\leq C\abs{r}$, \eqref{eq:Rmu-tail-Linfty-bound}, and $\norm{b_\mu(t)}_{L^2}\leq C\varepsilon$, we get
		\[
		\norm{
			\left(
			\ee^{\ii R_\mu(t)}-1
			\right)
			b_\mu(t)
		}_{L^2}
		\leq
		C\norm{R_\mu(t)}_{L^\infty}
		\norm{b_\mu(t)}_{L^2}
		\leq
		C\varepsilon^3t^{-1+C\varepsilon^2}.
		\]
		Also, by \Cref{prop:exact-renormalized-profile},
		\[
		\norm{b_\mu(t)-Z_\mu}_{L^2}
		\leq
		C\varepsilon t^{-1+C\varepsilon^2}.
		\]
		Therefore, combining the last two estimates gives \eqref{eq:frozen-profile-asymptotic-quantitative}.
		Since all constants are uniform in $0\leq\mu\leq1$, we also obtain also \eqref{eq:frozen-profile-asymptotic}.
	\end{proof}
	
	\begin{corollary}
		\label{cor:frozen-profile-physical-space}
		Under the assumptions of \Cref{thm:main-trichotomy}, one has
		\begin{equation}\label{eq:frozen-profile-physical-space-quantitative}
			\sup_{0\leq\mu\leq1}
			\norm{
				u_\mu(t,x)
				-
				(\ii t)^{-3/2}
				\ee^{\frac{\ii\abs{x}^2}{2t}}
				\ee^{-\ii\Phi_\mu[W_\mu](t,x/t)}
				W_\mu(x/t)
			}_{L_x^2}
			\leq
			C\varepsilon t^{-1+C\varepsilon^2}.
		\end{equation}
		In particular,
		\begin{equation}\label{eq:frozen-profile-physical-space}
			u_\mu(t,x)
			=
			(\ii t)^{-3/2}
			\ee^{\frac{\ii\abs{x}^2}{2t}}
			\ee^{-\ii\Phi_\mu[W_\mu](t,x/t)}
			W_\mu(x/t)
			+
			o_{L_x^2}(1)
		\end{equation}
		as $t\to\infty$, uniformly in $0\leq\mu\leq1$.
	\end{corollary}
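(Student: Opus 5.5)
The corollary is a direct transfer of \Cref{prop:frozen-profile-asymptotics} from the rescaled variable $v$ back to physical space, using the unitary change of variables \eqref{eq:main-u-from-amplitude}. Define the linear map
\[
(\mathcal T_t g)(x):=(\ii t)^{-3/2}\ee^{\frac{\ii\abs{x}^2}{2t}}g(x/t).
\]
By \eqref{eq:main-u-from-amplitude} we have $u_\mu(t)=\mathcal T_t a_\mu(t)$. We also have $\snorm{\mathcal T_t g}_{L_x^2}=\snorm{g}_{L_v^2}$: the Gaussian phase has modulus one, $\abs{(\ii t)^{-3/2}}^2=t^{-3}$, and the substitution $x=tv$ produces the Jacobian $t^3$.

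The steps are as follows. First, I apply $\mathcal T_t$ to $g=\ee^{-\ii\Phi_\mu[W_\mu](t)}W_\mu$. Since $\Phi_\mu[W_\mu](t,\cdot)$ is evaluated at $v=x/t$, this gives exactly
\[
(\ii t)^{-3/2}\ee^{\frac{\ii\abs{x}^2}{2t}}\ee^{-\ii\Phi_\mu[W_\mu](t,x/t)}W_\mu(x/t).
\]
Second, by linearity and the isometry, the $L_x^2$ norm in \eqref{eq:frozen-profile-physical-space-quantitative} equals
\[
\norm{a_\mu(t)-\ee^{-\ii\Phi_\mu[W_\mu](t)}W_\mu}_{L_v^2}.
\]
Third, \eqref{eq:frozen-profile-asymptotic-quantitative} bounds this norm by $C\varepsilon t^{-1+C\varepsilon^2}$, uniformly in $0\leq\mu\leq1$, so taking the supremum over $\mu$ gives \eqref{eq:frozen-profile-physical-space-quantitative}. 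Finally, once $\varepsilon_0$ is chosen with $C\varepsilon^2<1/2$, the bound $t^{-1+C\varepsilon^2}$ tends to $0$. This gives \eqref{eq:frozen-profile-physical-space}, uniformly in $\mu$, and therefore also along any sequences $(\mu_n,t_n)$ with $t_n\to\infty$.

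There is no real obstacle. The only points needing care are that $\Phi_\mu[W_\mu](t,\cdot)$ is real-valued and measurable, so the phase factor has modulus one, and that $W_\mu\in L^2$, so both sides are genuine $L^2$ functions. Both follow from \Cref{prop:exact-renormalized-profile}, which gives $Z_\mu\in L^2\cap L^\infty$ with $\abs{W_\mu}=\abs{Z_\mu}$, together with the finiteness of $\cC$ and $\cB$ on such profiles via \eqref{eq:K-convolution-L1-Linfty-bound}.
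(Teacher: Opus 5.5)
Your proposal is correct and takes essentially the same approach as the paper: the paper likewise writes $u_\mu(t)$ via \eqref{eq:main-u-from-amplitude}, uses that $A(v)\mapsto(\ii t)^{-3/2}\ee^{\frac{\ii\abs{x}^2}{2t}}A(x/t)$ is unitary from $L_v^2$ to $L_x^2$ to identify the two norms, and reads off both claims directly from \Cref{prop:frozen-profile-asymptotics}. Your extra remarks that the phase $\Phi_\mu[W_\mu]$ is real-valued and that $W_\mu\in L^2$ are harmless added care that the paper leaves implicit.
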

	
	\begin{proof}
		By \eqref{eq:main-u-from-amplitude},
		\[
		u_\mu(t,x)
		=
		(\ii t)^{-3/2}
		\ee^{\frac{\ii\abs{x}^2}{2t}}
		a_\mu(t,x/t).
		\]
		The map
		\[
		A(v)
		\mapsto
		(\ii t)^{-3/2}
		\ee^{\frac{\ii\abs{x}^2}{2t}}
		A(x/t)
		\]
		is unitary from $L_v^2$ to $L_x^2$. Therefore
		\[
		\norm{
			u_\mu(t,x)
			-
			(\ii t)^{-3/2}
			\ee^{\frac{\ii\abs{x}^2}{2t}}
			\ee^{-\ii\Phi_\mu[W_\mu](t,x/t)}
			W_\mu(x/t)
		}_{L_x^2}
		=
		\norm{
			a_\mu(t,v)
			-
			\ee^{-\ii\Phi_\mu[W_\mu](t,v)}
			W_\mu(v)
		}_{L_v^2}.
		\]
		The claim follows directly from \Cref{prop:frozen-profile-asymptotics}.
	\end{proof}
	
	\section{Quantitative continuity of the profiles}
	\label{sec:quantitative-continuity-profiles}
	
	This section proves that the asymptotic profiles converge strongly enough as $\mu\to0$ to absorb the large logarithmic phase.
	
	\begin{lemma}
		\label{lem:short-time-continuity-mu}
		Under the assumptions of \Cref{thm:main-trichotomy}, one has
		\begin{equation}\label{eq:short-time-continuity-mu}
			\sup_{0\leq t\leq1}
			\norm{u_\mu(t)-u_0(t)}_{L_x^2}
			\leq
			C\varepsilon^3\mu.
		\end{equation}
		In particular,
		\begin{equation}\label{eq:short-time-continuity-mu-at-one}
			\norm{u_\mu(1)-u_0(1)}_{L_x^2}
			\leq
			C\varepsilon^3\mu.
		\end{equation}
	\end{lemma}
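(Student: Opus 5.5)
Writing $w:=u_\mu-u_0$ and comparing the two Duhamel formulas gives an $L^2$ energy estimate on $[0,1]$. Both solutions start from the same datum, so $w(0)=0$, and
\[
\ii\partial_t w+\tfrac12\Delta w
=
\kappa\bigl(V_\mu*\abs{u_\mu}^2\bigr)u_\mu-\kappa\bigl(V_0*\abs{u_0}^2\bigr)u_0 .
\]
We split the right-hand side into three pieces:
\[
N_1=\kappa\bigl((V_\mu-V_0)*\abs{u_\mu}^2\bigr)u_\mu,\quad
N_2=\kappa\bigl(V_0*(\abs{u_\mu}^2-\abs{u_0}^2)\bigr)u_\mu,\quad
N_3=\kappa\bigl(V_0*\abs{u_0}^2\bigr)w .
\]
Pairing with $\bar w$ and taking imaginary parts, $N_3$ drops out because its coefficient is a real potential. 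This leaves $\frac{\dd}{\dd t}\norm{w}_{L^2}\le \norm{N_1}_{L^2}+\norm{N_2}_{L^2}$.

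\textbf{The source term $N_1$.} This is the only place where $\mu$ enters, and also the only genuinely new point. The key is the elementary bound
\[
0\le V_0(x)-V_\mu(x)=\frac{1-\ee^{-\mu\abs{x}}}{\abs{x}}\le \mu ,
\]
so that $\norm{(V_\mu-V_0)*\abs{u_\mu}^2}_{L^\infty}\le\mu\norm{u_\mu}_{L^2}^2\le \mu\varepsilon^2$ by mass conservation. Hence
\[
\norm{N_1}_{L^2}\le \mu\varepsilon^2\norm{u_\mu(t)}_{L^2}\le \mu\varepsilon^3 .
\]
This bound holds for every $t$, not only on $[0,1]$.

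\textbf{The difference term $N_2$.} We write $\abs{u_\mu}^2-\abs{u_0}^2=\mathrm{Re}\bigl((\bar u_\mu+\bar u_0)w\bigr)$, and estimate
\[
\norm{N_2}_{L^2}\le \norm{V_0*(\abs{u_\mu}^2-\abs{u_0}^2)}_{L^\infty}\norm{u_\mu}_{L^2}.
\]
For the $L^\infty$ norm we use Hardy's inequality in the form $\sup_x\int \abs{x-y}^{-1}\abs{f(y)}\abs{g(y)}\dd y\le C\norm{f}_{L^2}\norm{\nabla g}_{L^2}$ (Cauchy--Schwarz plus Hardy), with $f=w$ and $g=u_\mu+u_0$. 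This gives a bound by $C\norm{w}_{L^2}\bigl(\norm{u_\mu}_{H^1}+\norm{u_0}_{H^1}\bigr)\le C\varepsilon\norm{w}_{L^2}$, using \Cref{prop:uniform-H1-bound-both-signs}. Therefore $\norm{N_2}_{L^2}\le C\varepsilon^2\norm{w}_{L^2}$.

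\textbf{Closing with Gr\"onwall.} Combining the two bounds,
\[
\frac{\dd}{\dd t}\norm{w(t)}_{L^2}\le \mu\varepsilon^3+C\varepsilon^2\norm{w(t)}_{L^2},\qquad w(0)=0,
\]
and Gr\"onwall on $[0,1]$ yields $\norm{w(t)}_{L^2}\le \mu\varepsilon^3\ee^{C\varepsilon^2}\le C\varepsilon^3\mu$. Evaluating at $t=1$ gives \eqref{eq:short-time-continuity-mu-at-one}.

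\textbf{Justification and main obstacle.} The formal energy identity is justified by the $H^4$ regularity from \Cref{prop:uniform-HN-estimates} and the local theory, or alternatively by running the argument directly on the Duhamel integral. The main point to check carefully is the $N_2$ bound, which must use only quantities controlled uniformly in $\mu$. The Hardy-type estimate requires just the $H^1$ bound, so this should be unproblematic. If needed, one can instead use \eqref{eq:uniform-potential-Linfty-bound}-type interpolation with the $H^4$ control from local theory on $[0,1]$.
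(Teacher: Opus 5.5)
Your proof is correct. It shares the paper's skeleton: Gr\"onwall for $w=u_\mu-u_0$ on $[0,1]$, with the only $\mu$-dependent source controlled by $0\le V_0-V_\mu\le\mu$ and mass conservation. It differs in two technical choices.

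The paper estimates the Duhamel integral directly, using the split
$(V_\mu*\abs{u_\mu}^2)w+(V_\mu*(\abs{u_\mu}^2-\abs{u_0}^2))u_0+((V_\mu-V_0)*\abs{u_0}^2)u_0$.
It bounds the first piece through \Cref{prop:uniform-Hartree-potential-bounds}. That requires $\norm{u_\mu(s)}_{L^\infty}\le C\varepsilon$ on $[0,1]$, which in three dimensions needs more than the $H^1$ bound and is taken from the higher-regularity local theory. The second piece is bounded through Hardy--Littlewood--Sobolev $L^{6/5}\to L^6$ together with $\norm{u_0(s)}_{L^3}\le C\varepsilon$.

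By contrast, your $L^2$ energy identity removes the real-potential term $N_3$ altogether. Your Hardy/Cauchy--Schwarz bound for $N_2$ needs only the uniform $H^1$ bound of \Cref{prop:uniform-H1-bound-both-signs} and mass conservation, so your argument is more economical.

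The paper's choice pays off in \Cref{lem:polynomial-finite-time-continuity-mu}, where the same decomposition is run on $[1,S]$. There $\norm{u(s)}_{L^3}\le C\varepsilon s^{-1/2}$ makes the Gr\"onwall coefficient $C\varepsilon^2 s^{-1}$. This yields the polynomial growth $S^{1+C\varepsilon^2}$ that the later $\log(1/\mu)$-rate arguments need, with $S_\mu=(\log(1/\mu))^p$ and $p>1$. Your Hardy bound has a time-independent coefficient $C\varepsilon^2$, so on $[1,S]$ it would only give $\ee^{C\varepsilon^2 S}$, which is too weak there. To extend your argument past $t=1$, you would keep the energy identity but replace Hardy by the $L^{6/5}\to L^6$, $L^3$ estimate. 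On $[0,1]$ this distinction is immaterial, and your proof of the lemma as stated is complete.
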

	
	\begin{proof}
		Let
		\[
		w_\mu(t):=u_\mu(t)-u_0(t).
		\]
		Since the initial datum is the same for all $\mu$, we have
		\[
		w_\mu(0)=0.
		\]
		By Duhamel's formula and the unitarity of the free Schr\"odinger group on $L^2$, for $0\leq t\leq1$,
		\[
		\begin{aligned}
			\norm{w_\mu(t)}_{L^2}
			&\leq
			\int_0^t
			\norm{
				\bigl(V_\mu*\abs{u_\mu(s)}^2\bigr)u_\mu(s)
				-
				\bigl(V_0*\abs{u_0(s)}^2\bigr)u_0(s)
			}_{L^2}
			\,\dd s.
		\end{aligned}
		\]
		We decompose the nonlinear difference as
		\[
		\begin{aligned}
			&\bigl(V_\mu*\abs{u_\mu}^2\bigr)u_\mu
			-
			\bigl(V_0*\abs{u_0}^2\bigr)u_0
			\\
			&=
			\bigl(V_\mu*\abs{u_\mu}^2\bigr)(u_\mu-u_0)
			+
			\bigl(V_\mu*(\abs{u_\mu}^2-\abs{u_0}^2)\bigr)u_0
			+
			\bigl((V_\mu-V_0)*\abs{u_0}^2\bigr)u_0.
		\end{aligned}
		\]
		For $0\leq s\leq1$, the uniform small-data bounds and Sobolev embedding give
		\[
		\norm{u_\mu(s)}_{H^1}
		+
		\norm{u_0(s)}_{H^1}
		+
		\norm{u_\mu(s)}_{L^\infty}
		+
		\norm{u_0(s)}_{L^\infty}
		\leq
		C\varepsilon.
		\]
		Using \Cref{prop:uniform-Hartree-potential-bounds}, we get
		\[
		\norm{
			\bigl(V_\mu*\abs{u_\mu(s)}^2\bigr)w_\mu(s)
		}_{L^2}
		\leq
		C\varepsilon^2\norm{w_\mu(s)}_{L^2}.
		\]
		For the second term, since $V_\mu\leq V_0$, Hardy--Littlewood--Sobolev gives
		\[
		\norm{V_\mu*(\abs{u_\mu(s)}^2-\abs{u_0(s)}^2)}_{L^6}
		\leq
		C\norm{\abs{u_\mu(s)}^2-\abs{u_0(s)}^2}_{L^{6/5}}.
		\]
		Moreover,
		\[
		\norm{\abs{u_\mu(s)}^2-\abs{u_0(s)}^2}_{L^{6/5}}
		\leq
		C\varepsilon\norm{w_\mu(s)}_{L^2},
		\]
		and $\norm{u_0(s)}_{L^3}\leq C\varepsilon$. Hence
		\[
		\norm{
			\bigl(V_\mu*(\abs{u_\mu(s)}^2-\abs{u_0(s)}^2)\bigr)u_0(s)
		}_{L^2}
		\leq
		C\varepsilon^2\norm{w_\mu(s)}_{L^2}.
		\]
		Finally, using
		\[
		0\leq V_0(x)-V_\mu(x)
		=
		\frac{1-\ee^{-\mu\abs{x}}}{\abs{x}}
		\leq
		\mu,
		\]
		we obtain
		\[
		\norm{(V_\mu-V_0)*\abs{u_0(s)}^2}_{L^\infty}
		\leq
		\mu\norm{u_0(s)}_{L^2}^2
		\leq
		C\varepsilon^2\mu.
		\]
		Therefore
		\[
		\norm{
			\bigl((V_\mu-V_0)*\abs{u_0(s)}^2\bigr)u_0(s)
		}_{L^2}
		\leq
		C\varepsilon^3\mu.
		\]
		Combining the three estimates gives
		\[
		\norm{w_\mu(t)}_{L^2}
		\leq
		C\varepsilon^2
		\int_0^t
		\norm{w_\mu(s)}_{L^2}
		\,\dd s
		+
		C\varepsilon^3\mu t.
		\]
		By Gr\"onwall's inequality, for $0\leq t\leq1$,
		\[
		\norm{w_\mu(t)}_{L^2}
		\leq
		C\varepsilon^3\mu t\,\ee^{C\varepsilon^2t}
		\leq
		C\varepsilon^3\mu.
		\]
		This proves \eqref{eq:short-time-continuity-mu}.
	\end{proof}
	
	\begin{lemma}
		\label{lem:polynomial-finite-time-continuity-mu}
		For every $S\geq1$, one has
		\begin{equation}\label{eq:polynomial-finite-time-continuity-mu}
			\sup_{1\leq t\leq S}
			\norm{u_\mu(t)-u_0(t)}_{L_x^2}
			\leq
			C\varepsilon^3\mu S^{1+C\varepsilon^2}.
		\end{equation}
	\end{lemma}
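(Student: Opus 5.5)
We follow the argument of \Cref{lem:short-time-continuity-mu}, now on $[1,S]$, and exploit the time decay of the solutions. Set $w_\mu(t):=u_\mu(t)-u_0(t)$. By Duhamel's formula started at time $1$ and unitarity of the free group on $L^2$, for $1\leq t\leq S$,
\[
\norm{w_\mu(t)}_{L^2}
\leq
\norm{w_\mu(1)}_{L^2}
+
\int_1^t
\norm{
\bigl(V_\mu*\abs{u_\mu}^2\bigr)u_\mu
-
\bigl(V_0*\abs{u_0}^2\bigr)u_0
}_{L^2}(s)
\,\dd s .
\]
The initial term is bounded by $C\varepsilon^3\mu$ using \eqref{eq:short-time-continuity-mu-at-one}. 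We split the nonlinear difference into the same three pieces as in the short-time lemma.

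For the first piece we use \Cref{cor:uniform-yukawa-potential-bounds}: $\norm{V_\mu*\abs{u_\mu(s)}^2}_{L^\infty}\leq C\varepsilon^2 s^{-1}$, uniformly in $\mu$. This gives a contribution $C\varepsilon^2 s^{-1}\norm{w_\mu(s)}_{L^2}$.

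For the second piece, $V_\mu*(\abs{u_\mu}^2-\abs{u_0}^2)=V_\mu*\bigl(\mathrm{Re}\,(\overline{(u_\mu+u_0)}\,w_\mu)\bigr)$. We bound it in $L^\infty$ using $V_\mu\leq\abs{x}^{-1}$ and splitting the kernel at radius $R$: near field by Cauchy--Schwarz with $\abs{x}^{-1}\mathbf 1_{\abs{x}<R}\in L^2$, far field by $R^{-1}$ times an $L^1$ norm. This yields
\[
\norm{V_\mu*(\overline{u}\,w)}_{L^\infty}
\leq
C\norm{u}_{L^\infty}^{1/3}\norm{u}_{L^2}^{2/3}\norm{w}_{L^2}.
\]
Multiplying by $\norm{u_0(s)}_{L^\infty}\leq C\varepsilon s^{-3/2}$ from \eqref{eq:uniform-HN-Linfty-decay}, the piece is bounded by $C\varepsilon^2 s^{-2}\norm{w_\mu(s)}_{L^2}$. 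This is even better than $s^{-1}$; it suffices to have $s^{-1}$.

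For the third piece, $0\leq V_0-V_\mu\leq\mu$ gives $\norm{(V_0-V_\mu)*\abs{u_0}^2}_{L^\infty}\leq\mu\varepsilon^2$. Hence this piece is bounded by $C\varepsilon^3\mu$. We do not try to improve it using decay, because a crude bound suffices.

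Collecting the three pieces,
\[
\norm{w_\mu(t)}_{L^2}
\leq
C\varepsilon^3\mu
+
C\varepsilon^3\mu(t-1)
+
C\varepsilon^2\int_1^t s^{-1}\norm{w_\mu(s)}_{L^2}\,\dd s .
\]
Gr\"onwall's inequality with weight $s^{-1}$ then gives
\[
\norm{w_\mu(t)}_{L^2}\leq C\varepsilon^3\mu\, t\,\exp\!\Bigl(C\varepsilon^2\int_1^t s^{-1}\,\dd s\Bigr)=C\varepsilon^3\mu\, t^{1+C\varepsilon^2}.
\]
Taking the supremum over $t\leq S$ proves \eqref{eq:polynomial-finite-time-continuity-mu}.

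The main obstacle is keeping the Gr\"onwall coefficient of size $\varepsilon^2 s^{-1}$ rather than $\varepsilon^2$. A constant coefficient would produce an exponential in $S$ instead of the power $S^{C\varepsilon^2}$. This is why the linear terms must use the sharp $t^{-3/2}$ decay of \Cref{prop:uniform-HN-estimates} and \Cref{cor:uniform-yukawa-potential-bounds}, uniformly in $\mu$. In particular, the HLS-type $L^6$ estimate used in the short-time lemma is replaced by the $L^\infty$ potential bound above, which is the step to check carefully.
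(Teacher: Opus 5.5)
Your skeleton matches the paper's proof. You use Duhamel from $t=1$ with $\norm{w_\mu(1)}_{L^2}\leq C\varepsilon^3\mu$, the same three-term splitting, the bound $C\varepsilon^2s^{-1}\norm{w_\mu(s)}_{L^2}$ for the first piece from the uniform Yukawa potential decay, the crude bound $C\varepsilon^3\mu$ for the kernel-difference piece, and Gr\"onwall with weight $s^{-1}$. All of that is correct.

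\textbf{The second piece is wrong as written, in two places.}

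First, your interpolation exponents are swapped. Splitting at radius $R$ gives
\[
\sup_x\int\frac{\abs{u(y)}\abs{w(y)}}{\abs{x-y}}\dd y
\leq
(4\pi R)^{1/2}\norm{u}_{L^\infty}\norm{w}_{L^2}
+
R^{-1}\norm{u}_{L^2}\norm{w}_{L^2}.
\]
Optimizing in $R$ yields $C\norm{u}_{L^\infty}^{2/3}\norm{u}_{L^2}^{1/3}\norm{w}_{L^2}$. The scaling $u,w\mapsto u(\lambda\cdot),w(\lambda\cdot)$ also forces the exponent $2/3$ on the $L^\infty$ norm.

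Second, you only control $V_\mu*h_\mu$ in $L^\infty$, where $h_\mu=\abs{u_\mu}^2-\abs{u_0}^2$. H\"older therefore forces $\norm{(V_\mu*h_\mu)u_0}_{L^2}\leq\norm{V_\mu*h_\mu}_{L^\infty}\norm{u_0}_{L^2}$. You cannot multiply by $\norm{u_0}_{L^\infty}$; you paired correctly in the third piece.

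The two slips happen to compensate. The corrected computation gives
\[
\norm{\bigl(V_\mu*h_\mu(s)\bigr)u_0(s)}_{L^2}
\leq
C(\varepsilon s^{-3/2})^{2/3}\varepsilon^{1/3}\cdot\varepsilon\,\norm{w_\mu(s)}_{L^2}
=
C\varepsilon^2s^{-1}\norm{w_\mu(s)}_{L^2}.
\]
So your argument does close, but at exactly the borderline rate $s^{-1}$, not $s^{-2}$. Your claim that this is ``even better than $s^{-1}$'' is false. If you fixed only the H\"older pairing and kept your exponents, you would get $\varepsilon^2s^{-1/2}$. Gr\"onwall would then produce $\exp(C\varepsilon^2\sqrt{t})$ instead of a power of $S$. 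So this is indeed the step that needed checking.

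\textbf{Comparison with the paper.} Once repaired, your route differs from the paper's only in this piece. The paper keeps Hardy--Littlewood--Sobolev:
\[
\norm{V_\mu*h_\mu}_{L^6}\leq C\norm{h_\mu}_{L^{6/5}}\leq C\norm{u_\mu+u_0}_{L^3}\norm{w_\mu}_{L^2},
\]
paired with $\norm{u_0}_{L^3}$. The interpolated decay $\norm{u(s)}_{L^3}\leq C\varepsilon s^{-1/2}$ supplies $s^{-1/2}$ twice, giving the same $C\varepsilon^2s^{-1}$. So your closing remark that the HLS estimate of the short-time lemma must be replaced is unfounded: it only needs the $L^3$ decay as input.

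Your repaired version is a bilinear form of the near/far-field splitting behind the uniform potential bounds, and it avoids HLS. It places all of the decay on the potential factor and uses only mass conservation for $u_0$. The two routes are equally efficient and give the same $S^{1+C\varepsilon^2}$ growth.
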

	
	\begin{proof}
		Let
		\[
		w_\mu(t):=u_\mu(t)-u_0(t).
		\]
		By \Cref{lem:short-time-continuity-mu}, we have
		\[
		\norm{w_\mu(1)}_{L^2}
		\leq
		C\varepsilon^3\mu.
		\]
		For $t\geq1$, Duhamel's formula gives
		\[
		\norm{w_\mu(t)}_{L^2}
		\leq
		\norm{w_\mu(1)}_{L^2}
		+
		\int_1^t
		\norm{
			\bigl(V_\mu*\abs{u_\mu(s)}^2\bigr)u_\mu(s)
			-
			\bigl(V_0*\abs{u_0(s)}^2\bigr)u_0(s)
		}_{L^2}
		\,\dd s.
		\]
		We decompose the nonlinear difference as before:
		\[
		\bigl(V_\mu*\abs{u_\mu}^2\bigr)u_\mu
		-
		\bigl(V_0*\abs{u_0}^2\bigr)u_0
		=
		\bigl(V_\mu*\abs{u_\mu}^2\bigr)(u_\mu-u_0)
		+
		\bigl(V_\mu*(\abs{u_\mu}^2-\abs{u_0}^2)\bigr)u_0
		+
		\bigl((V_\mu-V_0)*\abs{u_0}^2\bigr)u_0.
		\]
		
		By \Cref{cor:uniform-yukawa-potential-bounds}, since $s>1$, 
		\[
		\norm{V_\mu*\abs{u_\mu(s)}^2}_{L^\infty}
		\leq
		C\varepsilon^2s^{-1}.
		\]
		Therefore
		\[
		\norm{
			\bigl(V_\mu*\abs{u_\mu(s)}^2\bigr)w_\mu(s)
		}_{L^2}
		\leq
		C\varepsilon^2s^{-1}\norm{w_\mu(s)}_{L^2}.
		\]
		
		For the second term, set
		\[
		h_\mu(s):=\abs{u_\mu(s)}^2-\abs{u_0(s)}^2.
		\]
		Since $V_\mu\leq V_0$, Hardy--Littlewood--Sobolev gives
		\[
		\norm{V_\mu*h_\mu(s)}_{L^6}
		\leq
		C\norm{h_\mu(s)}_{L^{6/5}}.
		\]
		By interpolation between $L^2$ and $L^\infty$, the decay estimate gives
		\[
		\norm{u_\mu(s)}_{L^3}
		+
		\norm{u_0(s)}_{L^3}
		\leq
		C\varepsilon s^{-1/2}.
		\]
		Hence
		\[
		\norm{h_\mu(s)}_{L^{6/5}}
		\leq
		C\varepsilon s^{-1/2}\norm{w_\mu(s)}_{L^2}.
		\]
		Using again $\norm{u_0(s)}_{L^3}\leq C\varepsilon s^{-1/2}$, we get
		\[
		\norm{
			\bigl(V_\mu*h_\mu(s)\bigr)u_0(s)
		}_{L^2}
		\leq
		\norm{V_\mu*h_\mu(s)}_{L^6}
		\norm{u_0(s)}_{L^3}
		\leq
		C\varepsilon^2s^{-1}\norm{w_\mu(s)}_{L^2}.
		\]
		
		For the kernel error, since $0\leq V_0(x)-V_\mu(x)\leq\mu$, we have
		\[
		\norm{(V_\mu-V_0)*\abs{u_0(s)}^2}_{L^\infty}
		\leq
		\mu\norm{u_0(s)}_{L^2}^2
		\leq
		\varepsilon^2\mu.
		\]
		Thus
		\[
		\norm{
			\bigl((V_\mu-V_0)*\abs{u_0(s)}^2\bigr)u_0(s)
		}_{L^2}
		\leq
		C\varepsilon^3\mu.
		\]
		
		Combining the estimates, for $1\leq t\leq S$,
		\[
		\norm{w_\mu(t)}_{L^2}
		\leq
		C\varepsilon^3\mu
		+
		C\varepsilon^2
		\int_1^t
		s^{-1}
		\norm{w_\mu(s)}_{L^2}
		\,\dd s
		+
		C\varepsilon^3\mu t.
		\]
		Therefore
		\[
		\norm{w_\mu(t)}_{L^2}
		\leq
		C\varepsilon^3\mu t
		+
		C\varepsilon^2
		\int_1^t
		s^{-1}
		\norm{w_\mu(s)}_{L^2}
		\,\dd s.
		\]
		By Gr\"onwall's inequality,
		\[
		\norm{w_\mu(t)}_{L^2}
		\leq
		C\varepsilon^3\mu t
		\exp\left(
		C\varepsilon^2\int_1^t s^{-1}\,\dd s
		\right)
		=
		C\varepsilon^3\mu t^{1+C\varepsilon^2}.
		\]
		Taking the supremum over $1\leq t\leq S$ proves \eqref{eq:polynomial-finite-time-continuity-mu}.
	\end{proof}
	
	\begin{lemma}
		\label{lem:continuity-Zmu}
		The exact renormalized profiles satisfy
		\begin{equation}\label{eq:Zmu-converges-Z0-H1}
			\norm{Z_\mu-Z_0}_{H_v^1}
			\longrightarrow0
			\qquad
			\text{as }\mu\to0.
		\end{equation}
	\end{lemma}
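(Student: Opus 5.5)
The plan is a three-epsilon argument with a time cutoff $S$, chosen after $\mu$:
\[
\norm{Z_\mu-Z_0}_{H_v^1}
\leq
\norm{Z_\mu-b_\mu(S)}_{H_v^1}
+
\norm{b_\mu(S)-b_0(S)}_{H_v^1}
+
\norm{b_0(S)-Z_0}_{H_v^1}.
\]
By \Cref{prop:Zmu-tail-bound}, the first and third terms are bounded by $C\varepsilon S^{-1+C\varepsilon^2}$, uniformly in $0\leq\mu\leq1$. Everything therefore reduces to showing that, for each fixed $S$, $\norm{b_\mu(S)-b_0(S)}_{H_v^1}\to0$ as $\mu\to0$. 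We then choose $S$ large first and afterwards $\mu$ small.

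For the middle term, write
\[
b_\mu(S)-b_0(S)
=
\ee^{\ii\Theta_\mu(S)}\bigl(a_\mu(S)-a_0(S)\bigr)
+
\bigl(\ee^{\ii\Theta_\mu(S)}-\ee^{\ii\Theta_0(S)}\bigr)a_0(S).
\]
The amplitude map is unitary, so \Cref{lem:polynomial-finite-time-continuity-mu} gives $\norm{a_\mu(s)-a_0(s)}_{L_v^2}\leq C\varepsilon^3\mu s^{1+C\varepsilon^2}$ for $1\leq s\leq S$. Next I would upgrade this to $H_v^1$ by interpolation. Indeed, $\norm{a_\mu(S)-a_0(S)}_{H_v^4}\leq C\varepsilon S^{C\varepsilon^2}$ by \Cref{cor:weighted-profile-amplitude-consequences}, hence
\[
\norm{a_\mu(S)-a_0(S)}_{H^1_v}
\leq
C\bigl(\mu S^{1+C\varepsilon^2}\bigr)^{3/4}
\bigl(S^{C\varepsilon^2}\bigr)^{1/4},
\]
which tends to $0$ as $\mu\to0$ for fixed $S$. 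For the gradient falling on the phase, the uniform bound $\norm{\nabla_v\Theta_\mu(S)}_{L^\infty}\leq C\varepsilon^2\log S$ from the proof of \Cref{prop:Zmu-tail-bound} suffices.

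For the phase-difference term, it is enough to show $\Theta_\mu(S)-\Theta_0(S)\to0$ in $L_v^\infty$ and $\nabla_v\Theta_\mu(S)-\nabla_v\Theta_0(S)\to0$ in $L_v^\infty$. Multiplying by $a_0(S)\in H^1\cap L^\infty$ then gives convergence in $H^1$, using $\abs{\ee^{\ii x}-\ee^{\ii y}}\leq\abs{x-y}$ and the product rule. Split
\[
K_{\mu s}*\abs{a_\mu}^2-K_0*\abs{a_0}^2
=
(K_{\mu s}-K_0)*\abs{a_\mu}^2
+
K_0*\bigl(\abs{a_\mu}^2-\abs{a_0}^2\bigr).
\]
Since $0\leq K_0-K_{\mu s}\leq\mu s$ pointwise, the first piece is at most $\mu s\,\varepsilon^2$ in $L^\infty$. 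For the second piece, apply \eqref{eq:K-convolution-L1-Linfty-bound} with $h=\abs{a_\mu}^2-\abs{a_0}^2$. Its $L^1$ norm is $\leq C\varepsilon\norm{a_\mu-a_0}_{L^2}$, and its $L^\infty$ norm is $\leq C\varepsilon^2$ (or, better, it is small via Sobolev interpolation as above). Integrating against $\dd s/s$ over $[1,S]$ gives a bound $o(1)$ as $\mu\to0$ for fixed $S$.

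The gradient of the phase is the main obstacle. The kernel difference satisfies
\[
\abs{\nabla(K_0-K_{\lambda})(z)}
\lesssim
\min\bigl(\lambda/\abs{z},\,\abs{z}^{-2}\bigr),
\]
which is not bounded. I would handle it with the same $R$-splitting as in \eqref{eq:K-convolution-L1-Linfty-bound}. On $\abs{z}<R$, bound by $\lambda R^2\norm{\abs{a}^2}_{L^\infty}$; on $\abs{z}>R$, bound by $R^{-2}\norm{\abs{a}^2}_{L^1}$. Optimizing in $R$ gives $C\varepsilon^2\lambda^{1/2}$, which still tends to $0$. For $\nabla K_0*h$, use the gradient analogue of \eqref{eq:K-convolution-L1-Linfty-bound},
\[
\norm{\nabla K_0*h}_{L^\infty}\leq C\norm{h}_{L^1}^{1/3}\norm{h}_{L^\infty}^{2/3}.
\]
Here $h\to0$ in $L^1$, and $\norm{h}_{L^\infty}$ is bounded by $C\varepsilon^2$, so this too tends to $0$. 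Collecting terms, $\limsup_{\mu\to0}\norm{Z_\mu-Z_0}_{H^1_v}\leq C\varepsilon S^{-1+C\varepsilon^2}$ for every $S$. Letting $S\to\infty$ proves \eqref{eq:Zmu-converges-Z0-H1}.
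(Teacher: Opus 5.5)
Your proposal is correct and follows essentially the same route as the paper. It uses the same three-term split with the uniform tail bound of \Cref{prop:Zmu-tail-bound}, the same fixed-$S$ comparison of $a_\mu(S)$ in $H_v^1$ and of $\Theta_\mu(S)$ in $W_v^{1,\infty}$, and the same near/far splitting with $R=(\mu s)^{-1/4}$ for $\nabla(K_{\mu s}-K_0)$. The only difference is cosmetic. You upgrade $L^2$ to $H^1$ by interpolating directly between $L_v^2$ and the uniform $H_v^4$ bound on $a_\mu$. The paper instead interpolates the weight $\abs{x}$ on $f_\mu(S)-f_0(S)$ and transfers it to $a_\mu$ via $a_\mu(S)=C\ee^{-\frac{\ii}{2S}\Delta_v}\widehat{f_\mu(S)}$; the two arguments are equivalent.
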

	This lemma follows by combining finite-time continuity with the uniform tail bound \eqref{eq:Zmu-tail-H1-bound}.
	
	\begin{proof}
		For $S\geq1$, we write
		\[
		\norm{Z_\mu-Z_0}_{H_v^1}
		\leq
		\norm{Z_\mu-b_\mu(S)}_{H_v^1}
		+
		\norm{b_\mu(S)-b_0(S)}_{H_v^1}
		+
		\norm{b_0(S)-Z_0}_{H_v^1}.
		\]
		
		For the first term and the third term, by \Cref{prop:Zmu-tail-bound},
		\[
		\norm{Z_\mu-b_\mu(S)}_{H_v^1}
		+
		\norm{Z_0-b_0(S)}_{H_v^1}
		\leq
		C\varepsilon S^{-1+C\varepsilon^2},
		\]
		uniformly in $0\leq\mu\leq1$. 
		
		From now all the limits are taken for $\mu \to 0$.
		
		It remains to prove that, for every fixed $S\geq1$,
		\begin{equation}\label{eq:fixed-time-bmu-continuity-H1}
			\norm{b_\mu(S)-b_0(S)}_{H_v^1}
			\longrightarrow0.
		\end{equation}
		We first prove the fixed-time convergence of the rescaled amplitudes. By \Cref{lem:polynomial-finite-time-continuity-mu},
		\[
		\norm{u_\mu(S)-u_0(S)}_{L_x^2}
		\longrightarrow0.
		\]
		Thus, for the Schr\"odinger profiles $f_\mu(t)=\ee^{-\frac{\ii t}{2}\Delta}u_\mu(t)$,
		\[
		\norm{f_\mu(S)-f_0(S)}_{L_x^2}
		=
		\norm{u_\mu(S)-u_0(S)}_{L_x^2}
		\longrightarrow0.
		\]
		On the other hand, from \Cref{prop:uniform-HN-estimates} and by interpolation between $L^2$ and $\abs{x}^4L^2$, we obtain
		\begin{equation}\label{eq:fixed-time-weighted-profile-continuity}
			\norm{\abs{x}(f_\mu(S)-f_0(S))}_{L_x^2}
			\longrightarrow0.
		\end{equation}
		Using 	$a_\mu(S) = C \ee^{-\frac{\ii}{2S}\Delta_v}\widehat{f_\mu(S)}$, we have
		\begin{equation}\label{eq:fixed-time-amplitude-H1-continuity}
			\norm{a_\mu(S)-a_0(S)}_{H_v^1}
			\longrightarrow0.
		\end{equation}
		Moreover, by interpolation with the uniform $H_v^4$-bound from \Cref{cor:weighted-profile-amplitude-consequences}, we also have
		\begin{equation}\label{eq:fixed-time-amplitude-Linfty-continuity}
			\norm{a_\mu(S)-a_0(S)}_{L_v^\infty}
			\longrightarrow0.
		\end{equation}
		
		We next prove convergence of the exact nonlinear phases.
		Recalling \eqref{eq:exact-nonlinear-phase-definition}, we decompose
		\[
		\Theta_\mu(S)-\Theta_0(S)
		=
		\kappa
		\int_1^S
		\frac1s
		K_{\mu s}*
		\left(
		\abs{a_\mu(s)}^2-\abs{a_0(s)}^2
		\right)
		\,\dd s
		+
		\kappa
		\int_1^S
		\frac1s
		\left(
		(K_{\mu s}-K_0)*\abs{a_0(s)}^2
		\right)
		\,\dd s.
		\]
		For the first term, \eqref{eq:fixed-time-amplitude-H1-continuity}, \eqref{eq:fixed-time-amplitude-Linfty-continuity}, and the uniform bounds imply
		\[
		\abs{a_\mu(s)}^2-\abs{a_0(s)}^2
		\longrightarrow0
		\qquad
		\text{in }L_v^1\cap L_v^\infty
		\]
		for every fixed $s\in[1,S]$, uniformly with an integrable bound in $s$. Therefore, by
		\[
		\norm{K_\lambda*h}_{L^\infty}
		\leq
		C\norm{h}_{L^1}^{2/3}\norm{h}_{L^\infty}^{1/3}
		\quad
		\text{and}
		\quad
		\norm{\nabla K_\lambda*h}_{L^\infty}
		\leq
		C\norm{h}_{L^1}^{1/3}\norm{h}_{L^\infty}^{2/3},
		\]
		we obtain
		\[
		\int_1^S
		\frac1s
		\norm{
			K_{\mu s}*
			\left(
			\abs{a_\mu(s)}^2-\abs{a_0(s)}^2
			\right)
		}_{W_v^{1,\infty}}
		\,\dd s
		\longrightarrow0.
		\]
		
		For the second term, we use the pointwise convergence $K_{\mu s}\to K_0$ as $\mu\to0$, together with the bounds
		\[
		0\leq K_0(z)-K_{\mu s}(z)
		=
		\frac{1-\ee^{-\mu s\abs{z}}}{\abs{z}}
		\leq
		\mu s
		\]
		and
		\[
		\norm{(K_{\mu s}-K_0)*\abs{a_0(s)}^2}_{L^\infty}
		\leq
		\mu s\norm{a_0(s)}_{L^2}^2.
		\]
		The corresponding gradient convergence follows by the same near-field/far-field splitting used for the Coulomb gradient kernel. More explicitly, for $R>0$,
		\[
		\norm{\nabla(K_{\mu s}-K_0)*\abs{a_0(s)}^2}_{L^\infty}
		\leq
		C\mu s R^2\norm{a_0(s)}_{L^\infty}^2
		+
		CR^{-2}\norm{a_0(s)}_{L^2}^2.
		\]
		Choosing $R=(\mu s)^{-1/4}$, and using $1\leq s\leq S$, gives
		\[
		\norm{\nabla(K_{\mu s}-K_0)*\abs{a_0(s)}^2}_{L^\infty}
		\leq
		C_S\varepsilon^2(\mu s)^{1/2}
		\longrightarrow0.
		\]
		Therefore
		\begin{equation}\label{eq:fixed-time-phase-W1infty-continuity}
			\norm{\Theta_\mu(S)-\Theta_0(S)}_{W_v^{1,\infty}}
			\longrightarrow0
			\qquad
			\text{as }\mu\to0.
		\end{equation}
		
		Now recall that
		\[
		b_\mu(S)
		=
		\ee^{\ii\Theta_\mu(S)}a_\mu(S).
		\]
		Using \eqref{eq:fixed-time-amplitude-H1-continuity} and \eqref{eq:fixed-time-phase-W1infty-continuity}, we obtain
		\[
		\begin{aligned}
			\norm{b_\mu(S)-b_0(S)}_{H_v^1}
			&=
			\norm{
				\ee^{\ii\Theta_\mu(S)}a_\mu(S)
				-
				\ee^{\ii\Theta_0(S)}a_0(S)
			}_{H_v^1}
			\\
			&=
			\norm{
				\ee^{\ii\Theta_\mu(S)}a_\mu(S)
				-\ee^{\ii\Theta_\mu(S)}a_0(S)
				+\ee^{\ii\Theta_\mu(S)}a_0(S)
				-\ee^{\ii\Theta_0(S)}a_0(S)
			}_{H_v^1}
			\\
			&\leq
			\norm{
				\ee^{\ii\Theta_\mu(S)}a_\mu(S)
				-\ee^{\ii\Theta_\mu(S)}a_0(S)
			}_{H_v^1}
			+
			\norm{
				\ee^{\ii\Theta_\mu(S)}a_0(S)
				-\ee^{\ii\Theta_0(S)}a_0(S)
			}_{H_v^1}
			\\
			&\leq
			C_S
			\norm{a_\mu(S)-a_0(S)}_{H_v^1}
			+
			C_S
			\norm{\Theta_\mu(S)-\Theta_0(S)}_{W_v^{1,\infty}}
			\norm{a_0(S)}_{H_v^1}
			\\
			&\to 0.
		\end{aligned}
		\]
		This proves \eqref{eq:fixed-time-bmu-continuity-H1}.
		
		Finally, taking the limsup as $\mu\to0$ in the initial decomposition gives
		\[
		\limsup_{\mu\to0}
		\norm{Z_\mu-Z_0}_{H_v^1}
		\leq
		C\varepsilon S^{-1+C\varepsilon^2}.
		\]
		Letting $S\to\infty$, and choosing $\varepsilon_0>0$ small enough so that $C\varepsilon^2<1$, we conclude that
		\[
		\norm{Z_\mu-Z_0}_{H_v^1}
		\longrightarrow0
		\qquad
		\text{as }\mu\to0.
		\]
		This proves the lemma.
	\end{proof}
	
	\begin{proposition}
		\label{prop:quantitative-continuity-Wmu}
		The frozen-profile normalized profiles satisfy
		\begin{equation}\label{eq:log-continuity-Wmu}
			\log(1/\mu)
			\norm{W_\mu-W_0}_{H_v^1}
			\longrightarrow0
			\qquad
			\text{as }\mu\to0.
		\end{equation}
	\end{proposition}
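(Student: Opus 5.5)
The plan is to upgrade the qualitative argument of \Cref{lem:continuity-Zmu} into a quantitative one, with the splitting time $S$ chosen as a negative power of $\mu$. The target is a power rate $\norm{W_\mu-W_0}_{H_v^1}\leq C\mu^{\delta}$ for some $\delta>0$ (for instance $\delta=1/8$) once $\varepsilon_0$ is small. Such a rate trivially beats $\log(1/\mu)$. We write
\[
W_\mu-W_0
=
\ee^{-\ii\Gamma_\mu^Y}(Z_\mu-Z_0)
+
\bigl(\ee^{-\ii\Gamma_\mu^Y}-\ee^{-\ii\Gamma_0^Y}\bigr)Z_0 .
\]
This gives
\[
\norm{W_\mu-W_0}_{H_v^1}
\leq
C\bigl(1+\snorm{\nabla\Gamma_\mu^Y}_{L^\infty}\bigr)\norm{Z_\mu-Z_0}_{H_v^1}
+
C\snorm{\Gamma_\mu^Y-\Gamma_0^Y}_{W^{1,\infty}}\norm{Z_0}_{H_v^1}.
\]
A uniform bound $\snorm{\Gamma_\mu^Y}_{W^{1,\infty}}\leq C\varepsilon^2$ follows exactly as the bound \eqref{eq:Rmu-tail-Linfty-bound} on $R_\mu$. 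One applies \eqref{eq:K-convolution-L1-Linfty-bound} and its gradient analogue $\snorm{\nabla K_\lambda*h}_{L^\infty}\leq C\snorm{h}_{L^1}^{1/3}\snorm{h}_{L^\infty}^{2/3}$ to $h_\mu(s)$, whose $L^1\cap L^\infty$ norm is $O(\varepsilon^2 s^{-1+C\varepsilon^2})$, and integrates against $s^{-1}\,\dd s$. This reduces the proposition to two power-rate estimates: one for $Z_\mu-Z_0$ in $H^1_v$ and one for $\Gamma_\mu^Y-\Gamma_0^Y$ in $W^{1,\infty}$.

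\emph{Step 1: quantitative $Z$-continuity.} We repeat the three-term splitting in the proof of \Cref{lem:continuity-Zmu}, but keep track of all $S$-dependence. The outer terms are at most $C\varepsilon S^{-1+C\varepsilon^2}$ by \Cref{prop:Zmu-tail-bound}. For the middle term, \Cref{lem:polynomial-finite-time-continuity-mu} gives $\norm{f_\mu(s)-f_0(s)}_{L^2}\leq C\varepsilon^3\mu s^{1+C\varepsilon^2}$ for $s\leq S$. Interpolating with the uniform bounds $C\varepsilon s^{C\varepsilon^2}$ on $\abs{x}^4 f$ and $H^4$ from \Cref{prop:uniform-HN-estimates} gives $\norm{a_\mu(s)-a_0(s)}_{H_v^1\cap L_v^\infty}\leq C\varepsilon(\mu s)^{\theta}s^{C\varepsilon^2}$ for a fixed $\theta>0$ (say $\theta=1/4$). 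The phase difference $\Theta_\mu(S)-\Theta_0(S)$ is then handled term by term as in that proof.
\begin{itemize}
\item The density-difference part is controlled in $W^{1,\infty}$ through the two convolution bounds by $C\varepsilon^2(\mu S)^{\theta/3}S^{C\varepsilon^2}\log S$.
\item The kernel-difference part is at most $\int_1^S\mu\,\varepsilon^2\,\dd s\leq\varepsilon^2\mu S$ in $L^\infty$.
\item Its gradient, using the $R$-splitting already shown, is at most $C\varepsilon^2\int_1^S s^{-1}(\mu s)^{1/2}\,\dd s\leq C\varepsilon^2(\mu S)^{1/2}$.
\end{itemize}
Since $\snorm{\nabla\Theta_0(S)}_{L^\infty}\leq C\varepsilon^2\log S$, the product estimate gives
\[
\norm{b_\mu(S)-b_0(S)}_{H_v^1}\leq C\varepsilon(\mu S)^{\theta/3}S^{C\varepsilon^2}\log S.
\]
Choosing $S=\mu^{-1/2}$ yields $\norm{Z_\mu-Z_0}_{H_v^1}\leq C\mu^{\theta/6-C\varepsilon^2}$.

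\emph{Step 2: $\Gamma$-continuity.} We split $\Gamma_\mu^Y-\Gamma_0^Y$ at the same $S$. For $s\geq S$, each of $\Gamma_\mu^Y$ and $\Gamma_0^Y$ separately contributes at most $C\varepsilon^2S^{-1+C\varepsilon^2}$ in $W^{1,\infty}$, by the same argument as for $R_\mu$. On $[1,S]$ we write
\[
K_{\mu s}*h_\mu(s)-K_0*h_0(s)
=
K_{\mu s}*(h_\mu-h_0)(s)+(K_{\mu s}-K_0)*h_0(s),
\]
where $h_\mu-h_0=(\abs{a_\mu}^2-\abs{a_0}^2)-(\abs{Z_\mu}^2-\abs{Z_0}^2)$. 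Steps 1 and 2's finite-time bounds control $h_\mu-h_0$ in $L^1\cap L^\infty$, using Sobolev embedding for the $Z$ part; note that $H^1$ alone does not give $L^\infty$, so there we instead use the $L^\infty$ closeness of $b$ from \Cref{prop:exact-renormalized-profile} combined with the $b$-difference at time $S$. The kernel-difference term is treated exactly as in Step 1. This again produces a bound of the form $\mu^{\delta'}$.

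The main obstacle is the bookkeeping in Step 1: every constant must grow at most like $S^{C\varepsilon^2}$ or $\log S$ rather than exponentially in $S$. That is exactly why the choice $S=\mu^{-1/2}$ still leaves a positive power of $\mu$. A related delicate point is obtaining $L^\infty$ (not merely $H^1$) control of $Z_\mu-Z_0$ for the density difference in Step 2. I would get it from the $L_v^\infty$ tail in \Cref{prop:exact-renormalized-profile} together with the $L_v^\infty$ closeness of $a_\mu(S)$ and $a_0(S)$, rather than from Sobolev embedding.
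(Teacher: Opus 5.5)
Your proof is correct and has the same architecture as the paper's. Both use the splitting $W_\mu-W_0=\ee^{-\ii\Gamma_\mu^Y}(Z_\mu-Z_0)+(\ee^{-\ii\Gamma_\mu^Y}-\ee^{-\ii\Gamma_0^Y})Z_0$. Both bound the tails beyond a cutoff time $S$ by \Cref{prop:Zmu-tail-bound} and the $R_\mu$-type estimate. Both obtain finite-time continuity from \Cref{lem:polynomial-finite-time-continuity-mu} by weighted interpolation. Both control $Z_\mu-Z_0$ in $L_v^\infty$ through the $L_v^\infty$ tails of $b_\mu$, which is exactly how \Cref{lem:log-continuity-Zmu} proceeds.

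The real difference is the choice of cutoff. The paper records the finite-time error only as $C\mu^\theta S^A$ with $A$ unspecified (\Cref{lem:quantitative-finite-time-bmu}), and therefore takes $S_\mu=(\log(1/\mu))^p$. Since $\mu^\theta$ beats every power of $\log(1/\mu)$, this needs no bookkeeping and gives exactly the $o(1/\log(1/\mu))$ rate the proposition asks for.

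Your choice $S=\mu^{-1/2}$ would fail for an arbitrary $A$, since you need $A<2\theta$. It therefore rests on your extra observation: every finite-time quantity depends on $(\mu,S)$ only through powers of $\mu S$, up to factors $S^{C\varepsilon^2}$ and powers of $\log S$. That observation is correct. It starts from $C\varepsilon^3\mu S^{1+C\varepsilon^2}$ and survives each later step: the $\abs{x}^4$-interpolation, the $H^1$--$H^4$ interpolation, the two convolution bounds, the kernel-difference bounds $\mu s$ and $(\mu s)^{1/2}$, and the $s^{-1}\,\dd s$ integrations. In return you get a genuine power rate $\snorm{W_\mu-W_0}_{H_v^1}\lesssim\mu^\delta$, which is strictly stronger than the proposition.

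Two cosmetic remarks. First, your own Step 1 exponents ($(\mu S)^{\theta/3}$ with $\theta=1/4$ and $S=\mu^{-1/2}$) give $\delta$ of order $1/24-C\varepsilon^2$, not the $1/8$ you mention; this is harmless. Second, in Step 2 you do not actually need $L_v^\infty$ closeness of $Z_\mu$ and $Z_0$. The bound $\snorm{K_\lambda*h}_{L^\infty}\lesssim\snorm{h}_{L^1}^{2/3}\snorm{h}_{L^\infty}^{1/3}$ and its gradient analogue only require $h$ to be small in $L^1$ and bounded in $L^\infty$, at the cost of a fractional power.
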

	This is the key continuity estimate needed to replace $W_\mu$ by $W_0$ inside the logarithmic Coulomb phase.
	
	To prove this proposition, we first prove the following three lemmas:
	
	\begin{lemma}
		\label{lem:quantitative-finite-time-bmu}
		There exist constants $\theta>0$, $A>0$, and $C>0$, independent of $0<\mu\leq1$, such that for every $S\geq1$,
		\begin{equation}\label{eq:quantitative-finite-time-bmu}
			\sup_{1\leq s\leq S}
			\norm{b_\mu(s)-b_0(s)}_{H_v^1}
			\leq
			C\mu^\theta S^A.
		\end{equation}
		Moreover,
		\begin{equation}\label{eq:quantitative-finite-time-bmu-Linfty}
			\sup_{1\leq s\leq S}
			\norm{b_\mu(s)-b_0(s)}_{L_v^\infty}
			\leq
			C\mu^\theta S^A.
		\end{equation}
	\end{lemma}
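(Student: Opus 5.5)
The plan is to make the qualitative argument of \Cref{lem:continuity-Zmu} quantitative. We interpolate between the $L^2$ difference bound of \Cref{lem:polynomial-finite-time-continuity-mu}, which is linear in $\mu$, and the uniform high-regularity bounds of \Cref{prop:uniform-HN-estimates}, which grow only like $s^{C\varepsilon^2}$. Interpolation costs a fractional power of $\mu$, which is why the statement has $\mu^\theta$ rather than $\mu$.

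\textbf{Step 1 (amplitude differences).} For $1\leq s\leq S$, \Cref{lem:polynomial-finite-time-continuity-mu} gives
\[
\norm{f_\mu(s)-f_0(s)}_{L^2}=\norm{u_\mu(s)-u_0(s)}_{L^2}\leq C\varepsilon^3\mu S^{1+C\varepsilon^2}.
\]
\Cref{prop:uniform-HN-estimates} gives $\norm{\abs{x}^4(f_\mu-f_0)(s)}_{L^2}\leq C\varepsilon S^{C\varepsilon^2}$. Interpolating the weights then yields
\[
\norm{\langle x\rangle^{k}(f_\mu-f_0)(s)}_{L^2}\leq C\mu^{1-k/4}S^{A}, \qquad 0\leq k\leq 4.
\]
Since $a_\mu(s)=c\,\ee^{-\frac{\ii}{2s}\Delta_v}\widehat{f_\mu(s)}$ and $\ee^{-\frac{\ii}{2s}\Delta_v}$ commutes with derivatives and is unitary on $H^k$, we obtain $\norm{a_\mu(s)-a_0(s)}_{H_v^k}\leq C\mu^{1-k/4}S^A$. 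Taking $k=1$ gives the $H^1$ bound with exponent $3/4$. For the $L^\infty$ bound we use $H^2\hookrightarrow L^\infty$ with $k=2$, which gives $\mu^{1/2}S^A$.

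\textbf{Step 2 (phase differences).} We write $\Theta_\mu(s)-\Theta_0(s)$ as the sum of the density-difference integral and the kernel-difference integral, exactly as in \Cref{lem:continuity-Zmu}.

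For the density term, let $h=\abs{a_\mu}^2-\abs{a_0}^2$. The uniform $L^2\cap L^\infty$ bounds $\norm{a_\mu(s)}_{L^2\cap L^\infty}\leq C\varepsilon$ together with Step 1 give $\norm{h}_{L^1\cap L^\infty}\leq C\mu^{1/2}S^A$. The interpolation bounds for $K_\lambda*h$ and $\nabla K_\lambda*h$ then give a contribution to $\norm{\Theta_\mu(s)-\Theta_0(s)}_{W^{1,\infty}}$ of size $C\mu^{1/2}S^A\log S$.

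For the kernel term, we use $0\leq K_0-K_{\mu s}\leq\mu s$ for the $L^\infty$ part, and the near/far splitting with $R=(\mu s)^{-1/4}$ for the gradient part. This gives $C\varepsilon^2(\mu s)^{1/2}$ per unit of $s$-integration, hence a total of $C(\mu S)^{1/2}\log S$. Absorbing the logarithms into $S^A$, we get
\[
\norm{\Theta_\mu(s)-\Theta_0(s)}_{W^{1,\infty}}\leq C\mu^{1/2}S^{A}.
\]
One point needs checking here: the gradient splitting uses $\abs{\nabla(K_0-K_{\mu s})(z)}\lesssim \mu s\abs{z}^{-1}$ near the origin. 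This holds, since the difference is $(1-\ee^{-\mu s\abs{z}})/\abs{z}$, whose gradient is $O(\mu s/\abs{z})$ for small $\mu s\abs{z}$. That near bound yields a term $\mu s R^2$, consistent with the choice of $R$ above.

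\textbf{Step 3 (combine).} We write
\[
b_\mu(s)-b_0(s)=\ee^{\ii\Theta_\mu}(a_\mu-a_0)+(\ee^{\ii\Theta_\mu}-\ee^{\ii\Theta_0})a_0 .
\]
In $H^1$, the first term is bounded by $(1+\norm{\nabla\Theta_\mu}_{L^\infty})\norm{a_\mu-a_0}_{H^1}$. Since $\norm{\nabla\Theta_\mu}_{L^\infty}\leq C\varepsilon^2\log S$ by the proof of \Cref{prop:Zmu-tail-bound}, this term is at most $C\mu^{3/4}S^A$. The second term is bounded by $\norm{\Theta_\mu-\Theta_0}_{W^{1,\infty}}(1+\norm{\nabla\Theta_0}_{L^\infty})\norm{a_0}_{H^1}$, using $\abs{\ee^{\ii\alpha}-\ee^{\ii\beta}}\leq\abs{\alpha-\beta}$ and the corresponding gradient identity. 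This is at most $C\mu^{1/2}S^A$. The $L^\infty$ estimate follows the same way, using $\norm{a_0}_{L^\infty}\leq C\varepsilon$. Therefore $\theta=1/2$ works.

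\textbf{Main obstacle.} The main difficulty is Step 1: controlling $\norm{a_\mu-a_0}_{H^1}$ and $\norm{a_\mu-a_0}_{L^\infty}$ by a power of $\mu$ rather than just showing they tend to zero. This depends on the weight interpolation being valid for the difference $f_\mu-f_0$. It is, since both profiles are uniformly bounded in $\abs{x}^4L^2$ with only $S^{C\varepsilon^2}$ growth, so every loss is polynomial in $S$.
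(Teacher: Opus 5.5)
Your proposal is correct and follows essentially the same route as the paper. The steps match: the finite-time $L^2$ stability bound $O(\mu S^{1+C\varepsilon^2})$, interpolation of weights against the uniform $\abs{x}^4$ bound, transfer to the amplitudes via $a_\mu(s)=c\,\ee^{-\frac{\ii}{2s}\Delta_v}\widehat{f_\mu(s)}$, the density/kernel splitting of $\Theta_\mu-\Theta_0$ with the near/far gradient estimate at $R=(\mu s)^{-1/4}$, and the product rule for $b_\mu=\ee^{\ii\Theta_\mu}a_\mu$. You are slightly more explicit than the paper: you identify $\theta=1/2$ and check the kernel bound $\abs{\nabla(K_0-K_{\mu s})(z)}\lesssim\mu s\abs{z}^{-1}$ that the paper's near-field estimate uses implicitly. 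That bound in fact holds for all $z$, not only where $\mu s\abs{z}$ is small.
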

	
	\begin{proof}
		We only indicate the quantitative mechanism, since the estimates are the same as in the proof of \Cref{lem:polynomial-finite-time-continuity-mu}, with derivatives and the exact phase included.
		
		First, the finite-time stability estimate for $u_\mu-u_0$, combined with the dispersive decay bounds, gives
		\begin{equation}\label{eq:finite-time-u-L2-polynomial}
			\sup_{0\leq t\leq S}
			\norm{u_\mu(t)-u_0(t)}_{L_x^2}
			\leq
			C\mu S^A.
		\end{equation}
		Here the growth in $S$ is polynomial because the Lipschitz coefficient in the stability inequality is $O(\varepsilon^2t^{-1+C\varepsilon^2})$, not $O(1)$, for $t\geq1$.
		
		We next pass from $u_\mu-u_0$ to the rescaled amplitudes.
		Let
		\[
		f_\mu(t)
		=
		\ee^{-\frac{\ii t}{2}\Delta}u_\mu(t).
		\]
		By unitarity of the free Schr\"odinger group and \eqref{eq:finite-time-u-L2-polynomial},
		\[
		\sup_{1\leq t\leq S}
		\norm{f_\mu(t)-f_0(t)}_{L_x^2}
		\leq
		C\mu S^A.
		\]
		On the other hand, by the uniform weighted-profile estimate,
		\[
		\sup_{1\leq t\leq S}
		\norm{\abs{x}^4(f_\mu(t)-f_0(t))}_{L_x^2}
		\leq
		C\varepsilon S^{C\varepsilon^2}.
		\]
		Interpolating between $L_x^2$ and $\abs{x}^{-4}L_x^2$, we obtain
		\[
		\begin{aligned}
			\sup_{1\leq t\leq S}
			\norm{\abs{x}(f_\mu(t)-f_0(t))}_{L_x^2}
			&\leq
			C
			\left(
			\sup_{1\leq t\leq S}
			\norm{f_\mu(t)-f_0(t)}_{L_x^2}
			\right)^{3/4}
			\left(
			\sup_{1\leq t\leq S}
			\norm{\abs{x}^4(f_\mu(t)-f_0(t))}_{L_x^2}
			\right)^{1/4}
			\\
			&\leq
			C\mu^{3/4}S^A.
		\end{aligned}
		\]
		Using the pseudo-conformal identity,
		\[
		a_\mu(t)
		=
		C \ee^{-\frac{\ii}{2t}\Delta_v}
		\widehat{f_\mu(t)},
		\]
		we have
		\[
		\norm{a_\mu(t)-a_0(t)}_{H_v^1}
		\leq
		C
		\norm{\abs{x}(f_\mu(t)-f_0(t))}_{L_x^2}.
		\]
		Therefore, after decreasing $\theta>0$ and increasing $A>0$ if necessary,
		\begin{equation}\label{eq:finite-time-a-H1-continuity}
			\sup_{1\leq t\leq S}
			\norm{a_\mu(t)-a_0(t)}_{H_v^1}
			\leq
			C\mu^\theta S^A.
		\end{equation}
		
		Finally, by interpolation with the uniform $H_v^4$-bound from \Cref{cor:weighted-profile-amplitude-consequences}, we obtain
		\begin{equation}\label{eq:finite-time-a-Linfty-continuity}
			\sup_{1\leq t\leq S}
			\norm{a_\mu(t)-a_0(t)}_{L_v^\infty}
			\leq
			C\mu^\theta S^A.
		\end{equation}
		
		We next compare the exact phases. Recall that
		\[
		\Theta_\mu(t)
		=
		\kappa
		\int_1^t
		\frac1s
		K_{\mu s}*\abs{a_\mu(s)}^2
		\,\dd s.
		\]
		We write
		\[
		\Theta_\mu(t)-\Theta_0(t)
		=
		\kappa
		\int_1^t
		\frac1s
		K_{\mu s}*
		\left(
		\abs{a_\mu(s)}^2-\abs{a_0(s)}^2
		\right)
		\,\dd s
		+
		\kappa
		\int_1^t
		\frac1s
		(K_{\mu s}-K_0)*\abs{a_0(s)}^2
		\,\dd s.
		\]
		Using \eqref{eq:finite-time-a-H1-continuity}, \eqref{eq:finite-time-a-Linfty-continuity}, and the convolution estimates
		\[
		\norm{K_\lambda*h}_{L^\infty}
		\leq
		C\norm{h}_{L^1}^{2/3}\norm{h}_{L^\infty}^{1/3},
		\qquad
		\norm{\nabla K_\lambda*h}_{L^\infty}
		\leq
		C\norm{h}_{L^1}^{1/3}\norm{h}_{L^\infty}^{2/3},
		\]
		we get
		\[
		\sup_{1\leq t\leq S}
		\norm{
			\int_1^t
			\frac1s
			K_{\mu s}*
			\left(
			\abs{a_\mu(s)}^2-\abs{a_0(s)}^2
			\right)
			\,\dd s
		}_{W_v^{1,\infty}}
		\leq
		C\mu^\theta S^A.
		\]
		For the kernel difference, one uses
		\[
		0\leq
		K_0(z)-K_{\mu s}(z)
		=
		\frac{1-\ee^{-\mu s\abs{z}}}{\abs{z}}
		\leq
		\mu s
		\]
		and the analogous near-field/far-field estimate for the gradient. This gives
		\[
		\sup_{1\leq t\leq S}
		\norm{
			\int_1^t
			\frac1s
			(K_{\mu s}-K_0)*\abs{a_0(s)}^2
			\,\dd s
		}_{W_v^{1,\infty}}
		\leq
		C\mu^\theta S^A.
		\]
		Consequently,
		\begin{equation}\label{eq:finite-time-Theta-W1infty}
			\sup_{1\leq t\leq S}
			\norm{\Theta_\mu(t)-\Theta_0(t)}_{W_v^{1,\infty}}
			\leq
			C\mu^\theta S^A.
		\end{equation}
		
		Finally,
		\[
		b_\mu(t)=\ee^{\ii\Theta_\mu(t)}a_\mu(t).
		\]
		Using \eqref{eq:finite-time-a-H1-continuity}, \eqref{eq:finite-time-a-Linfty-continuity}, and \eqref{eq:finite-time-Theta-W1infty}, we obtain
		\[
		\sup_{1\leq t\leq S}
		\norm{b_\mu(t)-b_0(t)}_{H_v^1}
		\leq
		C\mu^\theta S^A.
		\]
		The $L_v^\infty$-estimate follows in the same way. This proves the lemma.
	\end{proof}
	
	\begin{lemma}
		\label{lem:log-continuity-Zmu}
		One has
		\begin{equation}\label{eq:log-continuity-Zmu}
			\log(1/\mu)
			\left(
			\norm{Z_\mu-Z_0}_{H_v^1}
			+
			\norm{Z_\mu-Z_0}_{L_v^\infty}
			\right)
			\longrightarrow0
			\qquad
			\text{as }\mu\to0.
		\end{equation}
	\end{lemma}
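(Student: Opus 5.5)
We argue as in the proof of \Cref{lem:continuity-Zmu}, but now choose the intermediate time $S$ depending on $\mu$ to obtain a polynomial rate in $\mu$. For $S\geq1$ we split
\[
\norm{Z_\mu-Z_0}_{X}
\leq
\norm{Z_\mu-b_\mu(S)}_{X}
+
\norm{b_\mu(S)-b_0(S)}_{X}
+
\norm{b_0(S)-Z_0}_{X},
\qquad
X\in\{H_v^1,L_v^\infty\}.
\]
For $X=H_v^1$ the first and third terms are bounded by $C\varepsilon S^{-1+C\varepsilon^2}$ by \Cref{prop:Zmu-tail-bound}. For $X=L_v^\infty$ the same bound follows from \eqref{eq:exact-profile-tail-bound} in \Cref{prop:exact-renormalized-profile}. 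In both cases the constants are uniform in $0\leq\mu\leq1$. The middle term is controlled by \Cref{lem:quantitative-finite-time-bmu}, through \eqref{eq:quantitative-finite-time-bmu} and \eqref{eq:quantitative-finite-time-bmu-Linfty}, which give $C\mu^\theta S^A$.

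Combining these bounds, we get for every $S\geq1$
\[
\norm{Z_\mu-Z_0}_{H_v^1}+\norm{Z_\mu-Z_0}_{L_v^\infty}
\leq
C\varepsilon S^{-1+C\varepsilon^2}+C\mu^\theta S^A.
\]
We now balance the two terms. Take $S=\mu^{-\beta}$ with $\beta:=\theta/(2A)$, which is admissible since $S\geq1$ for $0<\mu\leq1$. The second term becomes $C\mu^{\theta/2}$. If $\varepsilon_0$ is small enough that $C\varepsilon^2\leq1/2$, the first term is at most $C\varepsilon\mu^{\beta/2}$. This yields the polynomial bound
\[
\norm{Z_\mu-Z_0}_{H_v^1}+\norm{Z_\mu-Z_0}_{L_v^\infty}
\leq
C\mu^{\delta},
\qquad
\delta:=\min\{\theta/2,\beta/2\}>0.
\]
Since $\mu^\delta\log(1/\mu)\to0$ as $\mu\to0$, the claim \eqref{eq:log-continuity-Zmu} follows.

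The only real input is \Cref{lem:quantitative-finite-time-bmu}: the rate must be polynomial in $S$, not exponential, and this is the main point to check. Exponential growth such as $\ee^{CS}$ would only allow $S\sim\log(1/\mu)$. The tail would then be $(\log(1/\mu))^{-1+C\varepsilon^2}$, which fails to beat the factor $\log(1/\mu)$. The polynomial growth comes from the $s^{-1}$ Lipschitz coefficient in the Gr\"onwall argument of \Cref{lem:polynomial-finite-time-continuity-mu}, and we take it as given from the preceding lemma. A second point to check is that the constants $\theta$, $A$ and the tail constants do not depend on $S$ or $\mu$. The tail estimates were proved uniformly in $\mu$, and \Cref{lem:quantitative-finite-time-bmu} is stated uniformly in $S$, so the choice $S=\mu^{-\beta}$ is legitimate.
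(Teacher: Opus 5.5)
Your proof is correct and follows the paper's argument. It uses the same three-term splitting at an intermediate time $S$, the uniform tail bounds \eqref{eq:exact-profile-tail-bound} and \eqref{eq:Zmu-tail-H1-bound}, and \Cref{lem:quantitative-finite-time-bmu} for the middle term. The only difference is the choice of cutoff: the paper takes $S_\mu=(\log(1/\mu))^p$ with $p(1-C\varepsilon^2)>1$, which just suffices to beat the factor $\log(1/\mu)$, whereas your choice $S=\mu^{-\theta/(2A)}$ uses the polynomial growth in $S$ more fully and gives the stronger rate $\norm{Z_\mu-Z_0}_{H_v^1}+\norm{Z_\mu-Z_0}_{L_v^\infty}\le C\mu^\delta$.
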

	
	\begin{proof}
		Let $S\geq1$. We decompose
		\[
		\begin{aligned}
			\norm{Z_\mu-Z_0}_{H_v^1}
			+
			\norm{Z_\mu-Z_0}_{L_v^\infty}
			&\leq
			\norm{Z_\mu-b_\mu(S)}_{H_v^1}
			+
			\norm{Z_\mu-b_\mu(S)}_{L_v^\infty}
			\\
			&\quad
			+
			\norm{b_\mu(S)-b_0(S)}_{H_v^1}
			+
			\norm{b_\mu(S)-b_0(S)}_{L_v^\infty}
			\\
			&\quad
			+
			\norm{b_0(S)-Z_0}_{H_v^1}
			+
			\norm{b_0(S)-Z_0}_{L_v^\infty}.
		\end{aligned}
		\]
		By the uniform tail estimates \eqref{eq:Zmu-tail-H1-bound} and \eqref{eq:exact-profile-tail-bound},
		\[
		\norm{Z_\mu-b_\mu(S)}_{H_v^1}
		+
		\norm{Z_\mu-b_\mu(S)}_{L_v^\infty}
		+
		\norm{b_0(S)-Z_0}_{H_v^1}
		+
		\norm{b_0(S)-Z_0}_{L_v^\infty}
		\leq
		C\varepsilon S^{-1+C\varepsilon^2}.
		\]
		By \Cref{lem:quantitative-finite-time-bmu},
		\[
		\norm{b_\mu(S)-b_0(S)}_{H_v^1}
		+
		\norm{b_\mu(S)-b_0(S)}_{L_v^\infty}
		\leq
		C\mu^\theta S^A.
		\]
		Hence
		\begin{equation}\label{eq:Zmu-Z0-H1-Linfty-quantitative-cutoff}
			\norm{Z_\mu-Z_0}_{H_v^1}
			+
			\norm{Z_\mu-Z_0}_{L_v^\infty}
			\leq
			C\varepsilon S^{-1+C\varepsilon^2}
			+
			C\mu^\theta S^A.
		\end{equation}
		Set
		\[
		L_\mu:=\log(1/\mu),
		\qquad
		S_\mu:=L_\mu^p,
		\]
		where $p>0$ is chosen so large that
		\[
		p(1-C\varepsilon^2)>1.
		\]
		Multiplying \eqref{eq:Zmu-Z0-H1-Linfty-quantitative-cutoff} by $L_\mu$, with $S=S_\mu$, gives
		\[
		\begin{aligned}
			L_\mu
			\left(
			\norm{Z_\mu-Z_0}_{H_v^1}
			+
			\norm{Z_\mu-Z_0}_{L_v^\infty}
			\right)
			&\leq
			C\varepsilon
			L_\mu
			S_\mu^{-1+C\varepsilon^2}
			+
			C
			L_\mu
			\mu^\theta
			S_\mu^A
			\\
			&=
			C\varepsilon
			L_\mu^{1-p(1-C\varepsilon^2)}
			+
			C
			\mu^\theta
			L_\mu^{1+pA}.
		\end{aligned}
		\]
		The first term tends to zero by the choice of $p$, and the second term tends to zero because any power of $\log(1/\mu)$ is dominated by any negative power of $\mu$. Therefore
		\[
		\log(1/\mu)
		\left(
		\norm{Z_\mu-Z_0}_{H_v^1}
		+
		\norm{Z_\mu-Z_0}_{L_v^\infty}
		\right)
		\longrightarrow0.
		\]
		This proves the lemma.
	\end{proof}
	
	\begin{lemma}
		\label{lem:log-Gamma-Y-convergence}
		One has
		\begin{equation}\label{eq:log-Gamma-Y-convergence}
			\log(1/\mu)
			\norm{\Gamma_\mu^Y-\Gamma_0^Y}_{W_v^{1,\infty}}
			\longrightarrow0
			\qquad
			\text{as }\mu\to0.
		\end{equation}
	\end{lemma}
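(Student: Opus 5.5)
Write $h_\mu(s):=\abs{a_\mu(s)}^2-\abs{Z_\mu}^2=\abs{b_\mu(s)}^2-\abs{Z_\mu}^2$. The plan is to split the time integral in \eqref{eq:GammaY-definition} at a cutoff $S=S_\mu$, treat the two pieces differently, and then choose $S_\mu$ as a power of $\log(1/\mu)$, exactly as in the proof of \Cref{lem:log-continuity-Zmu}. Concretely,
\[
\Gamma_\mu^Y-\Gamma_0^Y
=
\kappa\int_1^S\frac1s\Bigl(K_{\mu s}*h_\mu(s)-K_0*h_0(s)\Bigr)\dd s
+
\kappa\int_S^\infty\frac1s\Bigl(K_{\mu s}*h_\mu(s)-K_0*h_0(s)\Bigr)\dd s .
\]

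\textbf{Tail piece.} Each of $h_\mu$ and $h_0$ is estimated separately, uniformly in $\mu$. The proof of \Cref{prop:frozen-profile-asymptotics} gives $\norm{h_\mu(s)}_{L^1\cap L^\infty}\le C\varepsilon^2 s^{-1+C\varepsilon^2}$. Combined with \eqref{eq:K-convolution-L1-Linfty-bound}, this yields an $L^\infty$ bound by $C\varepsilon^2 S^{-1+C\varepsilon^2}$. For the gradient I would use the companion bound $\norm{\nabla K_\lambda*h}_{L^\infty}\le C\norm{h}_{L^1}^{1/3}\norm{h}_{L^\infty}^{2/3}$. This bound is uniform in $\lambda$ because $\abs{\nabla K_\lambda}\le C\abs{z}^{-2}$, and it gives the same decay. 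Hence the tail contributes $C\varepsilon^2 S^{-1+C\varepsilon^2}$ in $W^{1,\infty}$.

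\textbf{Finite-time piece.} I would decompose
\[
K_{\mu s}*h_\mu-K_0*h_0=K_{\mu s}*(h_\mu-h_0)+(K_{\mu s}-K_0)*h_0 .
\]
For the first term, write
\[
h_\mu-h_0=\bigl(\abs{b_\mu}^2-\abs{b_0}^2\bigr)-\bigl(\abs{Z_\mu}^2-\abs{Z_0}^2\bigr).
\]
Its $L^1\cap L^\infty$ norm is bounded by $C\varepsilon$ times
\[
\norm{b_\mu(s)-b_0(s)}_{L^2\cap L^\infty}+\norm{Z_\mu-Z_0}_{L^2\cap L^\infty}.
\]
By \Cref{lem:quantitative-finite-time-bmu} and the cutoff estimate \eqref{eq:Zmu-Z0-H1-Linfty-quantitative-cutoff}, this is at most $C\mu^\theta S^A+C\varepsilon S^{-1+C\varepsilon^2}$. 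Both convolution bounds then give a power of this quantity, say with exponent $1/3$ in the worst case, times $\log S$ after integrating $\dd s/s$.

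For the kernel-difference term, use $0\le K_0-K_{\mu s}\le\mu s$ for the $L^\infty$ part, giving $\mu s\norm{h_0}_{L^1}\le C\varepsilon^2\mu s$. For the gradient part, use the near/far splitting from \Cref{lem:continuity-Zmu}, which gives $C\varepsilon^2(\mu s)^{1/2}$. Integrated over $[1,S]$, this term is $C\mu^{1/2}S^{1/2}$.

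\textbf{Choice of cutoff.} Altogether,
\[
\norm{\Gamma_\mu^Y-\Gamma_0^Y}_{W^{1,\infty}}\le C S^{-1+C\varepsilon^2}+C\log S\,\bigl(\mu^\theta S^A+S^{-1+C\varepsilon^2}\bigr)^{1/3}+C\mu^{1/2}S^{1/2}.
\]
The main obstacle is the loss of exponent from interpolation. The $1/3$ power degrades the $S^{-1}$ tail into $S^{-(1-C\varepsilon^2)/3}$, but this is still a negative power of $S$. So I would take $S_\mu=L_\mu^p$ with $L_\mu=\log(1/\mu)$ and $p(1-C\varepsilon^2)/3>1$. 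Then every $S$-decay term times $L_\mu\log S_\mu$ tends to zero, and the $\mu^\theta$ and $\mu^{1/2}$ terms beat any power of $L_\mu$. If the cubic-root loss causes trouble, the fallback is to avoid interpolating the $Z$-difference. Since $\abs{Z_\mu}^2-\abs{Z_0}^2$ is independent of $s$, one can bound $\norm{Z_\mu-Z_0}$ directly by \Cref{lem:log-continuity-Zmu}, which already has the $\log(1/\mu)$ factor. Its cube root times $\log S_\mu\cdot L_\mu$ does not obviously vanish, so the preferred route is the explicit polynomial rate \eqref{eq:Zmu-Z0-H1-Linfty-quantitative-cutoff} with $S=S_\mu$.
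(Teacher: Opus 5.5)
Your proposal is correct and follows the paper's proof essentially step by step. It uses the same split of the time integral at $S$, the same uniform $W^{1,\infty}$ tail bound $C\varepsilon^2S^{-1+C\varepsilon^2}$, and the same decomposition $K_{\mu s}*(h_\mu-h_0)+(K_{\mu s}-K_0)*h_0$ on $[1,S]$, controlled by \Cref{lem:quantitative-finite-time-bmu} and the quantitative cutoff bound \eqref{eq:Zmu-Z0-H1-Linfty-quantitative-cutoff} rather than the qualitative \Cref{lem:log-continuity-Zmu}, followed by the choice $S_\mu=\log(1/\mu)^p$. The cube-root loss you guard against never actually occurs: both $\norm{h_\mu(s)-h_0(s)}_{L^1}$ and $\norm{h_\mu(s)-h_0(s)}_{L^\infty}$ are small, so the interpolated convolution bounds reduce to the linear estimate $\norm{K_\lambda*h}_{W^{1,\infty}}\le C(\norm{h}_{L^1}+\norm{h}_{L^\infty})$ used in the paper, and $p(1-C\varepsilon^2)>1$ already suffices.
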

	
	\begin{proof}
		For $S\geq1$, decompose
		\[
		\Gamma_\mu^Y
		=
		\Gamma_{\mu,\leq S}^Y
		+
		\Gamma_{\mu,>S}^Y,
		\]
		where
		\[
		\Gamma_{\mu,\leq S}^Y(v)
		:=
		\kappa
		\int_1^S
		\frac1s
		\left(
		K_{\mu s}*
		\left(
		\abs{a_\mu(s)}^2-\abs{Z_\mu}^2
		\right)
		\right)(v)
		\,\dd s,
		\]
		and
		\[
		\Gamma_{\mu,>S}^Y(v)
		:=
		\kappa
		\int_S^\infty
		\frac1s
		\left(
		K_{\mu s}*
		\left(
		\abs{a_\mu(s)}^2-\abs{Z_\mu}^2
		\right)
		\right)(v)
		\,\dd s.
		\]
		By the same estimate used in the proof of \Cref{prop:frozen-profile-asymptotics}, together with \eqref{eq:Zmu-tail-H1-bound}, we have
		\begin{equation}\label{eq:Gamma-tail-W1infty}
			\sup_{0\leq\mu\leq1}
			\norm{\Gamma_{\mu,>S}^Y}_{W_v^{1,\infty}}
			\leq
			C\varepsilon^2S^{-1+C\varepsilon^2}.
		\end{equation}
		Thus
		\[
		\norm{\Gamma_{\mu,>S}^Y-\Gamma_{0,>S}^Y}_{W_v^{1,\infty}}
		\leq
		C\varepsilon^2S^{-1+C\varepsilon^2}.
		\]
		
		It remains to estimate the finite part. Write
		\[
		h_\mu(s)
		:=
		\abs{a_\mu(s)}^2-\abs{Z_\mu}^2.
		\]
		Then
		\[
		\Gamma_{\mu,\leq S}^Y-\Gamma_{0,\leq S}^Y
		=
		\kappa
		\int_1^S
		\frac1s
		K_{\mu s}*(h_\mu(s)-h_0(s))
		\,\dd s
		+
		\kappa
		\int_1^S
		\frac1s
		(K_{\mu s}-K_0)*h_0(s)
		\,\dd s.
		\]
		
		We first estimate $h_\mu-h_0$. Since $\abs{a_\mu}=\abs{b_\mu}$, we have
		\[
		h_\mu(s)
		=
		\abs{b_\mu(s)}^2-\abs{Z_\mu}^2.
		\]
		Therefore
		\[
		\begin{aligned}
			h_\mu(s)-h_0(s)
			&=
			\left(
			\abs{b_\mu(s)}^2-\abs{b_0(s)}^2
			\right)
			-
			\left(
			\abs{Z_\mu}^2-\abs{Z_0}^2
			\right).
		\end{aligned}
		\]
		Using the uniform $L^2\cap L^\infty$ bounds on $b_\mu(s)$ and $Z_\mu$, together with \Cref{lem:quantitative-finite-time-bmu}, we obtain, for $1\leq s\leq S$,
		\[
		\norm{h_\mu(s)-h_0(s)}_{L^1}
		\leq
		C\mu^\theta S^A
		+
		C\norm{Z_\mu-Z_0}_{L^2_v},
		\]
		and
		\[
		\norm{h_\mu(s)-h_0(s)}_{L^\infty}
		\leq
		C\mu^\theta S^A
		+
		C\norm{Z_\mu-Z_0}_{L^\infty_v}.
		\]
		Using the convolution bounds
		\[
		\norm{K_\lambda*h}_{W_v^{1,\infty}}
		\leq
		C
		\left(
		\norm{h}_{L^1}
		+
		\norm{h}_{L^\infty}
		\right),
		\]
		in the form already used in the proof of \Cref{prop:frozen-profile-asymptotics}, we get
		\[
		\norm{
			\int_1^S
			\frac1s
			K_{\mu s}*(h_\mu(s)-h_0(s))
			\,\dd s
		}_{W_v^{1,\infty}}
		\leq
		C(\log S)
		\left(
		\mu^\theta S^A
		+
		\norm{Z_\mu-Z_0}_{H_v^1}
		+
		\norm{Z_\mu-Z_0}_{L_v^\infty}
		\right).
		\]
		
		For the kernel-difference term, we use the quantitative kernel convergence
		\[
		\norm{(K_{\mu s}-K_0)*h_0(s)}_{W_v^{1,\infty}}
		\leq
		C\mu^\theta s^A
		\norm{h_0(s)}_{L^1\cap L^\infty},
		\]
		which follows from the pointwise bound
		\[
		0\leq
		K_0(z)-K_{\mu s}(z)
		\leq
		\mu s
		\]
		and the corresponding near-field/far-field estimate for the gradient. Since
		\[
		\norm{h_0(s)}_{L^1\cap L^\infty}
		\leq
		C\varepsilon^2s^{-1+C\varepsilon^2},
		\]
		we obtain
		\[
		\norm{
			\int_1^S
			\frac1s
			(K_{\mu s}-K_0)*h_0(s)
			\,\dd s
		}_{W_v^{1,\infty}}
		\leq
		C\mu^\theta S^A.
		\]
		Combining the finite and tail estimates yields
		\begin{equation}\label{eq:Gamma-difference-cutoff-bound}
			\norm{\Gamma_\mu^Y-\Gamma_0^Y}_{W_v^{1,\infty}}
			\leq
			C\varepsilon^2S^{-1+C\varepsilon^2}
			+
			C(\log S)
			\left(
			\mu^\theta S^A
			+
			\norm{Z_\mu-Z_0}_{H_v^1}
			+
			\norm{Z_\mu-Z_0}_{L_v^\infty}
			\right)
			+
			C\mu^\theta S^A.
		\end{equation}
		
		Now set again
		\[
		L_\mu:=\log(1/\mu),
		\qquad
		S_\mu:=L_\mu^p,
		\]
		where $p>0$ is chosen sufficiently large so that
		\[
		p(1-C\varepsilon^2)>1.
		\]
		By \Cref{lem:log-continuity-Zmu},
		\[
		L_\mu
		\left(
		\norm{Z_\mu-Z_0}_{H_v^1}
		+
		\norm{Z_\mu-Z_0}_{L_v^\infty}
		\right)
		\to0.
		\]
		Moreover, $\log S_\mu=p\log L_\mu$. Multiplying \eqref{eq:Gamma-difference-cutoff-bound} by $L_\mu$, we get
		\[
		L_\mu
		\norm{\Gamma_\mu^Y-\Gamma_0^Y}_{W_v^{1,\infty}}
		\leq
		C\varepsilon^2
		L_\mu^{1-p(1-C\varepsilon^2)}
		+
		C
		L_\mu(\log S_\mu)\mu^\theta S_\mu^A
		+
		C
		(\log S_\mu)
		L_\mu\norm{Z_\mu-Z_0}_{H_v^1}.
		\]
		The first term tends to zero by the choice of $p$. The second term tends to zero because
		\[
		L_\mu(\log S_\mu)\mu^\theta S_\mu^A
		=
		p\mu^\theta L_\mu^{1+pA}\log L_\mu
		\to0.
		\]
		For the last term, use the quantitative bound from
		\eqref{eq:Zmu-Z0-H1-Linfty-quantitative-cutoff} with the same $S_\mu$, rather than only the qualitative conclusion:
		\[
		\norm{Z_\mu-Z_0}_{H_v^1}
		+
		\norm{Z_\mu-Z_0}_{L_v^\infty}
		\leq
		C\varepsilon S_\mu^{-1+C\varepsilon^2}
		+
		C\mu^\theta S_\mu^A.
		\]
		Thus
		\[
		(\log S_\mu)
		L_\mu
		\left(
		\norm{Z_\mu-Z_0}_{H_v^1}
		+
		\norm{Z_\mu-Z_0}_{L_v^\infty}
		\right)
		\leq
		C(\log L_\mu)
		L_\mu^{1-p(1-C\varepsilon^2)}
		+
		C\mu^\theta L_\mu^{1+pA}\log L_\mu
		\to0.
		\]
		Therefore
		\[
		\log(1/\mu)
		\norm{\Gamma_\mu^Y-\Gamma_0^Y}_{W_v^{1,\infty}}
		\longrightarrow0.
		\]
		This proves the lemma.
	\end{proof}
	
	Now we are ready to prove \Cref{prop:quantitative-continuity-Wmu}.
	
	\begin{proof}[Proof of \Cref{prop:quantitative-continuity-Wmu}]
		Recall that
		\[
		W_\mu
		=
		\ee^{-\ii\Gamma_\mu^Y}Z_\mu,
		\qquad
		W_0
		=
		\ee^{-\ii\Gamma_0^Y}Z_0.
		\]
		Hence
		\[
		W_\mu-W_0
		=
		\ee^{-\ii\Gamma_\mu^Y}(Z_\mu-Z_0)
		+
		\left(
		\ee^{-\ii\Gamma_\mu^Y}
		-
		\ee^{-\ii\Gamma_0^Y}
		\right)Z_0.
		\]
		
		Since $\Gamma_\mu^Y$ is real-valued and uniformly bounded in $W_v^{1,\infty}$, multiplication by $\ee^{-\ii\Gamma_\mu^Y}$ is uniformly bounded on $H_v^1$. Therefore
		\[
		\norm{
			\ee^{-\ii\Gamma_\mu^Y}(Z_\mu-Z_0)
		}_{H_v^1}
		\leq
		C\norm{Z_\mu-Z_0}_{H_v^1}.
		\]
		Also,
		\[
		\norm{
			\ee^{-\ii\Gamma_\mu^Y}
			-
			\ee^{-\ii\Gamma_0^Y}
		}_{W_v^{1,\infty}}
		\leq
		C
		\norm{\Gamma_\mu^Y-\Gamma_0^Y}_{W_v^{1,\infty}},
		\]
		and hence
		\[
		\norm{
			\left(
			\ee^{-\ii\Gamma_\mu^Y}
			-
			\ee^{-\ii\Gamma_0^Y}
			\right)Z_0
		}_{H_v^1}
		\leq
		C
		\norm{\Gamma_\mu^Y-\Gamma_0^Y}_{W_v^{1,\infty}}
		\norm{Z_0}_{H_v^1}.
		\]
		Combining these estimates gives
		\[
		\norm{W_\mu-W_0}_{H_v^1}
		\leq
		C
		\norm{Z_\mu-Z_0}_{H_v^1}
		+
		C
		\norm{\Gamma_\mu^Y-\Gamma_0^Y}_{W_v^{1,\infty}}.
		\]
		Multiplying by $\log(1/\mu)$ and using \Cref{lem:log-continuity-Zmu,lem:log-Gamma-Y-convergence}, we obtain
		\[
		\log(1/\mu)
		\norm{W_\mu-W_0}_{H_v^1}
		\longrightarrow0.
		\]
		This proves the proposition.
	\end{proof}
	
	\section{Phase functional estimates}
	\label{sec:phase-functional-estimates}
	
	This section proves the mapping and continuity properties of $\cC$, $\cB_L$, and $\cB_\infty$ needed in the three regimes.
	
	\begin{definition}[Profile topology]
		We say that $G_n\to G$ in the profile topology if
		\[
		G_n\to G
		\quad\text{strongly in }H_v^1,
		\qquad
		\sup_n \norm{G_n}_{L_v^\infty}
		+
		\norm{G}_{L_v^\infty}
		<\infty.
		\]
		Equivalently, the convergence is strong in $H_v^1$ along a uniformly $L_v^\infty$-bounded profile class.
	\end{definition}
	
	\begin{lemma}
		\label{lem:C-functional-continuity}
		If $G_n\to G$ in the profile topology, then
		\begin{equation}\label{eq:C-functional-continuity}
			\norm{\cC[G_n]-\cC[G]}_{L_v^\infty}
			\longrightarrow0.
		\end{equation}
	\end{lemma}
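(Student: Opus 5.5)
The argument is linear in the density. Set $h_n:=\abs{G_n}^2-\abs{G}^2$, so that
\[
\cC[G_n]-\cC[G]=\kappa\,\abs{\cdot}^{-1}*h_n .
\]
The convolution bound \eqref{eq:K-convolution-L1-Linfty-bound}, taken with $\lambda=0$ (so $K_0=\abs{z}^{-1}$), then gives
\[
\norm{\cC[G_n]-\cC[G]}_{L_v^\infty}\leq C\norm{h_n}_{L^1}^{2/3}\norm{h_n}_{L^\infty}^{1/3}.
\]
It therefore suffices to show that $\norm{h_n}_{L^1}\to0$ while $\norm{h_n}_{L^\infty}$ stays bounded.

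For the $L^1$ norm, I would write $h_n=\overline{G_n}(G_n-G)+G\,\overline{(G_n-G)}$ and apply Cauchy--Schwarz:
\[
\norm{h_n}_{L^1}\leq\bigl(\norm{G_n}_{L^2}+\norm{G}_{L^2}\bigr)\norm{G_n-G}_{L^2}.
\]
Strong $H_v^1$ convergence gives $\norm{G_n-G}_{L^2}\to0$ and also $\sup_n\norm{G_n}_{L^2}<\infty$, so $\norm{h_n}_{L^1}\to0$. For the $L^\infty$ norm, the uniform bound in the definition of the profile topology gives directly
\[
\norm{h_n}_{L^\infty}\leq\sup_n\norm{G_n}_{L^\infty}^2+\norm{G}_{L^\infty}^2<\infty.
\]
Combining the two, the right-hand side of the interpolation bound is $o(1)^{2/3}\cdot O(1)$, which tends to zero and proves \eqref{eq:C-functional-continuity}.

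None of these steps is a real obstacle. The only thing to check is the proof of \eqref{eq:K-convolution-L1-Linfty-bound} for $\lambda=0$, and this is the same near-field/far-field splitting of $\abs{z}^{-1}$ at radius $R$. Note that the $L^\infty$ convergence of $G_n$ is not needed; uniform boundedness suffices because the $L^\infty$ factor enters only with exponent $1/3$. As an alternative that avoids the $L^\infty$ hypothesis entirely, Hardy's inequality gives $\sup_v\int\abs{v-w}^{-1}\abs{\phi(w)}^2\,\dd w\leq\norm{\phi}_{L^2}\norm{\nabla\phi}_{L^2}$. Applying this bilinearly to $h_n$ also gives convergence using $H_v^1$ alone. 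I would mention this as a remark but use the first argument, since it matches the tools already used in the paper.
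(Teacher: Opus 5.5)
Your proposal is correct and matches the paper's proof essentially line by line: the same $h_n=\abs{G_n}^2-\abs{G}^2$, the same Cauchy--Schwarz bound for $\norm{h_n}_{L^1}$, the uniform $L^\infty$ bound from the profile topology, and the near/far-field interpolation estimate $\norm{\abs{\cdot}^{-1}*h}_{L^\infty}\leq C\norm{h}_{L^1}^{2/3}\norm{h}_{L^\infty}^{1/3}$. Your Hardy-inequality alternative is also valid, and the paper itself uses that route later, in the proof of \Cref{prop:master-comparison}, to obtain the quantitative bound $\norm{\cC[F]-\cC[G]}_{L_v^\infty}\leq C(\norm{F}_{L_v^2}+\norm{G}_{L_v^2})\norm{F-G}_{H_v^1}$.
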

	This lemma controls the logarithmic Coulomb phase under convergence of profiles.
	
	\begin{proof}
		We write
		\[
		\cC[G_n](v)-\cC[G](v)
		=
		\kappa
		\int_{\R^3}
		\frac{\abs{G_n(w)}^2-\abs{G(w)}^2}{\abs{v-w}}
		\,\dd w.
		\]
		Set
		\[
		h_n
		:=
		\abs{G_n}^2-\abs{G}^2.
		\]
		It is enough to prove
		\[
		\snorm{\abs{\,\cdot\,}^{-1}*h_n}_{L_v^\infty}
		\longrightarrow0.
		\]
		
		First,
		\[
		\norm{h_n}_{L^1}
		\leq
		\left(
		\norm{G_n}_{L^2}
		+
		\norm{G}_{L^2}
		\right)
		\norm{G_n-G}_{L^2}.
		\]
		Since $G_n\to G$ in $H_v^1$, in particular $G_n\to G$ in $L_v^2$. Moreover the $L^2$-norms are bounded. Hence
		\begin{equation}\label{eq:C-continuity-hn-L1}
			\norm{h_n}_{L^1}
			\longrightarrow0.
		\end{equation}
		On the other hand, the profile class under consideration is uniformly bounded in $L^\infty$, and therefore
		\begin{equation}\label{eq:C-continuity-hn-Linfty}
			\norm{h_n}_{L^\infty}
			\leq
			\norm{G_n}_{L^\infty}^2
			+
			\norm{G}_{L^\infty}^2
			\leq
			C.
		\end{equation}
		We use the elementary Coulomb convolution estimate
		\begin{equation}\label{eq:Coulomb-convolution-L1-Linfty}
			\norm{\abs{\,\cdot\,}^{-1}*h}_{L^\infty}
			\leq
			C
			\norm{h}_{L^1}^{2/3}
			\norm{h}_{L^\infty}^{1/3}.
		\end{equation}
		Applying \eqref{eq:Coulomb-convolution-L1-Linfty} to $h=h_n$, and using \eqref{eq:C-continuity-hn-L1} and \eqref{eq:C-continuity-hn-Linfty}, we obtain
		\[
		\norm{
			\abs{\,\cdot\,}^{-1}*h_n
		}_{L^\infty}
		\leq
		C
		\norm{h_n}_{L^1}^{2/3}
		\norm{h_n}_{L^\infty}^{1/3}
		\longrightarrow0.
		\]
		Therefore
		\[
		\norm{\cC[G_n]-\cC[G]}_{L_v^\infty}
		\longrightarrow0.
		\]
		This proves the lemma.
	\end{proof}
	
	\begin{lemma}
		\label{lem:BL-functional-continuity}
		For each fixed $L\in[0,\infty)$, if $G_n\to G$ in the profile topology, then
		\begin{equation}\label{eq:BL-functional-continuity}
			\norm{\cB_L[G_n]-\cB_L[G]}_{L_v^\infty}
			\longrightarrow0.
		\end{equation}
		Moreover, this convergence is uniform for $L$ in compact subsets of $[0,\infty)$.
	\end{lemma}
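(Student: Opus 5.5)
The plan is to mirror the proof of \Cref{lem:C-functional-continuity}, with the Coulomb kernel replaced by $b_L$. As there, set $h_n:=\abs{G_n}^2-\abs{G}^2$. Then $\cB_L[G_n]-\cB_L[G]=\kappa\, b_L*h_n$, and we already know $\norm{h_n}_{L^1}\to0$ and $\sup_n\norm{h_n}_{L^\infty}\leq C$. It therefore suffices to prove a kernel bound of the type
\[
\snorm{b_L*h}_{L^\infty}\leq C_L\left(\norm{h}_{L^1}^{2/3}\norm{h}_{L^\infty}^{1/3}+\norm{h}_{L^1}\right),
\]
with $C_L$ bounded for $L$ in compact subsets of $[0,\infty)$.

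\textbf{Step 1: pointwise bound on $b_L$.} We use $\abs{\ee^{-r\abs{z}}-1}\leq\min\{1,r\abs{z}\}$. For $\abs{z}\leq1$, integrating in $r$ gives
\[
\abs{b_L(z)}\leq \abs{z}^{-1}\int_0^L \abs{z}\,\dd r=L .
\]
For $\abs{z}\geq1$, we split the $r$-integral at $r=\min\{L,1/\abs{z}\}$:
\[
\abs{b_L(z)}\leq \abs{z}^{-1}\Bigl(1+\log_+(L\abs{z})\Bigr).
\]
Both bounds are explicit in $L$. In particular $\abs{b_L(z)}\leq C(1+L)\bigl(1+\abs{z}^{-1}\log(2+\abs{z})\bigr)$, and in fact $b_L$ is bounded by $C_L=C(1+L)$ on all of $\R^3$. (At $L=0$ the kernel vanishes identically and the claim is trivial.)

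\textbf{Step 2: convolution estimate.} Since $\abs{b_L(z)}\leq C(1+L)$ for all $z$, we get directly
\[
\snorm{b_L*h}_{L^\infty}\leq C(1+L)\norm{h}_{L^1}.
\]
This alone already gives $\norm{\cB_L[G_n]-\cB_L[G]}_{L^\infty}\leq C(1+L)\norm{h_n}_{L^1}\to0$, uniformly for $L\in[0,L_0]$. The uniform $L^\infty$ bound on the profile class is not even needed here, although one may alternatively split $b_L=b_L\mathbf 1_{\abs{z}\leq R}+b_L\mathbf 1_{\abs{z}>R}$ as in \eqref{eq:Coulomb-convolution-L1-Linfty} if one prefers estimates that are better behaved as $L$ grows.

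\textbf{Step 3: uniformity.} The constant $C(1+L)$ is bounded on compacts, and $h_n$ does not depend on $L$, so the convergence is uniform in $L$ on compact sets.

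The only real point to check is the uniform boundedness of $b_L$, in particular the logarithmic bound in the region $\abs{z}\geq1$. Writing
\[
\int_0^{L}\frac{1-\ee^{-r\abs{z}}}{r}\,\dd r=\int_0^{L\abs{z}}\frac{1-\ee^{-s}}{s}\,\dd s\leq 1+\log_+(L\abs{z})
\]
makes this immediate, and also shows $\abs{b_L(z)}\leq C(1+L)$ after dividing by $\abs{z}\geq1$, because $\log(1+L\abs{z})/\abs{z}\leq C(1+L)$. So I expect no serious obstacle.
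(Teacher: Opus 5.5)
Your proof is correct and is essentially the paper's argument: you bound $b_L$ uniformly in $z$ and use $\norm{h_n}_{L^1}\to0$, and, as you note, the $L^\infty$ bound on the profiles is not needed. The only difference is cosmetic. Your first estimate $\abs{\ee^{-r\abs{z}}-1}\leq r\abs{z}$ never uses $\abs{z}\leq1$, so it already gives $\abs{b_L(z)}\leq L$ for all $z\neq0$, which is exactly what the paper uses; the case split at $\abs{z}=1$ and the weaker constant $C(1+L)$ are therefore unnecessary.
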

	This lemma controls the finite transition regime $\mu t\to L$.
	
	\begin{proof}
		We write
		\[
		\cB_L[G_n](v)-\cB_L[G](v)
		=
		\kappa
		\int_{\R^3}
		b_L(v-w)
		\left(
		\abs{G_n(w)}^2-\abs{G(w)}^2
		\right)
		\,\dd w.
		\]
		Set
		\[
		h_n
		:=
		\abs{G_n}^2-\abs{G}^2.
		\]
		For $L\geq0$, since
		\[
		b_L(z)
		=
		\frac1{\abs{z}}
		\int_0^L
		\frac{\ee^{-r\abs{z}}-1}{r}
		\,\dd r,
		\]
		and
		\[
		\abs{\ee^{-r\abs{z}}-1}
		\leq
		r\abs{z},
		\]
		we get
		\begin{equation}\label{eq:bL-basic-uniform-bound}
			\abs{b_L(z)}
			\leq
			\frac1{\abs{z}}
			\int_0^L
			\abs{z}
			\,\dd r
			=
			L.
		\end{equation}
		
		Therefore
		\[
		\norm{\cB_L[G_n]-\cB_L[G]}_{L_v^\infty}
		\leq
		L\norm{h_n}_{L^1}.
		\]
		Moreover,
		\[
		\norm{h_n}_{L^1}
		\leq
		\left(
		\norm{G_n}_{L^2}
		+
		\norm{G}_{L^2}
		\right)
		\norm{G_n-G}_{L^2}.
		\]
		Since $G_n\to G$ in the profile topology, in particular $G_n\to G$ in $L_v^2$, and the $L^2$-norms are uniformly bounded. Hence
		\[
		\norm{h_n}_{L^1}
		\longrightarrow0.
		\]
		Thus, for every fixed $L\in[0,\infty)$,
		\[
		\norm{\cB_L[G_n]-\cB_L[G]}_{L_v^\infty}
		\longrightarrow0.
		\]
		
		It remains to prove the uniformity for $L$ in compact subsets of $[0,\infty)$. Let $L_*>0$. If $0\leq L\leq L_*$, then \eqref{eq:bL-basic-uniform-bound} gives
		\[
		\abs{b_L(z)}
		\leq
		L_*
		\qquad
		\text{for all }z\neq0.
		\]
		Therefore
		\[
		\sup_{0\leq L\leq L_*}
		\norm{\cB_L[G_n]-\cB_L[G]}_{L_v^\infty}
		\leq
		L_*
		\norm{h_n}_{L^1}
		\longrightarrow0.
		\]
		This proves the uniform convergence for $L$ in compact subsets of $[0,\infty)$.
	\end{proof}
	
	\begin{lemma}
		\label{lem:Binfty-functional-continuity}
		If $G_n\to G$ in the profile topology,
		then
		\begin{equation}\label{eq:Binfty-functional-continuity}
			\norm{\cB_\infty[G_n]-\cB_\infty[G]}_{L_v^\infty}
			\longrightarrow0.
		\end{equation}
	\end{lemma}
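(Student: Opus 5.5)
Write $h_n:=\abs{G_n}^2-\abs{G}^2$. Then
\[
\cB_\infty[G_n](v)-\cB_\infty[G](v)=\kappa\int_{\R^3}b_\infty(v-w)h_n(w)\,\dd w .
\]
As in \Cref{lem:C-functional-continuity}, convergence in the profile topology gives $\norm{h_n}_{L^1}\to0$ and $\sup_n\norm{h_n}_{L^\infty}\leq C$. It therefore suffices to prove a convolution estimate of the form
\[
\norm{b_\infty*h}_{L^\infty}\leq C\bigl(\norm{h}_{L^1}^{\alpha}\norm{h}_{L^\infty}^{1-\alpha}+\text{something vanishing as }\norm{h}_{L^1}\to0\bigr)
\]
for some $\alpha\in(0,1)$. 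The kernel is not bounded, since $\abs{b_\infty(z)}\sim\abs{z}^{-1}\log(1/\abs{z})$ near $0$ and $\sim\abs{z}^{-1}\log\abs{z}$ at infinity. So I will split the convolution into three regions, $\abs{z}<R$, $R\leq\abs{z}\leq R'$, and $\abs{z}>R'$, and handle the pieces separately.

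\emph{Near field.} For $\abs{z}\leq\rho$ with $\rho\leq1$, the pointwise bound $\abs{b_\infty(z)}\leq C\abs{z}^{-1}(1+\log(1/\abs{z}))$ is locally integrable. It gives
\[
\Bigl|\int_{\abs{v-w}<\rho}b_\infty(v-w)h(w)\,\dd w\Bigr|\leq C\norm{h}_{L^\infty}\rho^2\bigl(1+\log(1/\rho)\bigr),
\]
which tends to $0$ as $\rho\to0$, uniformly in $n$.

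\emph{Middle region.} On $\rho\leq\abs{z}\leq R$ the kernel is bounded by $C_{\rho,R}$, so this piece is at most $C_{\rho,R}\norm{h_n}_{L^1}$, which tends to $0$ as $n\to\infty$ for fixed $\rho$ and $R$.

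\emph{Far field.} This is the main obstacle, because $b_\infty(z)\sim-\log\abs{z}/\abs{z}$ is unbounded, though only logarithmically, as $\abs{z}\to\infty$, so $L^1$ smallness of $h_n$ alone does not control it. I would use additional decay of the profiles. Splitting $h_n=\overline{(G_n-G)}G_n+\overline{G}(G_n-G)$ and applying Cauchy--Schwarz, $\abs{\int_{\abs{v-w}>R}b_\infty(v-w)h_n(w)\,\dd w}$ is bounded by
\[
\bigl(\norm{G_n}_{L^2}+\norm{G}_{L^2}\bigr)\sup_{v}\norm{b_\infty(v-\cdot)\mathbf 1_{\abs{v-\cdot}>R}\,(G_n-G)}_{L^2}.
\]
By Hardy's inequality $\norm{\abs{\cdot-v}^{-1}f}_{L^2}\leq2\norm{\nabla f}_{L^2}$, applied with $f=G_n-G$, together with $\abs{b_\infty(z)}\leq C\abs{z}^{-1}(1+\log\abs{z})$ for $\abs{z}\geq1$, the logarithm costs a little more. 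I would therefore replace Hardy by the interpolated weight $\abs{z}^{-1+\delta}$, using
\[
\abs{b_\infty(z)}\leq C_\delta\abs{z}^{-1+\delta}\qquad\text{on }\abs{z}\geq1,
\]
and control $\norm{\abs{\cdot-v}^{-1+\delta}f}_{L^2}$ via the fractional Hardy inequality by $\norm{f}_{\dot H^{1-\delta}}\leq\norm{f}_{H^1}$. This gives a far-field bound $C\norm{G_n-G}_{H^1}$, which tends to $0$ uniformly in $v$. It is exactly here that the strong $H^1_v$ convergence in the profile topology is used, rather than only $L^2\cap L^\infty$ bounds. In fact this Hardy-type bound also covers the near field, since $\abs{z}^{-1}\log(1/\abs{z})\leq C_\delta\abs{z}^{-1-\delta'}$ with $1+\delta'<3/2$, so I may be able to avoid the three-way split entirely: bound $\abs{b_\infty(z)}\leq C(\abs{z}^{-1-\delta}+\abs{z}^{-1+\delta})$ globally and apply fractional Hardy with exponents $1\pm\delta<3/2$, controlled by $\norm{G_n-G}_{H^1}$ times bounded $L^2$ norms.

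Combining the pieces gives $\limsup_n\norm{\cB_\infty[G_n]-\cB_\infty[G]}_{L^\infty_v}\leq C\rho^2\log(1/\rho)$ for every $\rho$; letting $\rho\to0$ yields the claim. With the global Hardy version the conclusion is immediate, and the $L^\infty$ bound of the profile class is then not even needed.
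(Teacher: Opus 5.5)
Your three-region argument is correct, and the near and middle regions are handled exactly as in the paper. The difference is the far field, and there your stated reason for needing extra machinery is mistaken. Although $\abs{b_\infty(z)}\sim\abs{z}^{-1}\log\abs{z}$ at infinity, this tends to $0$. For $R>2$ one has $\sup_{\abs{z}\geq R}\abs{b_\infty(z)}=(\gamma+\log R)/R$, so the kernel is bounded away from the origin, and $L^1$ smallness of $h_n$ \emph{does} control the far piece. The paper simply bounds it by $C\frac{\log R}{R}\norm{h_n}_{L^1}$. Your fractional Hardy bound (weight $\abs{z}^{-1+\delta}$, norm $\dot H^{1-\delta}$) is valid but unnecessary. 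Your claim that strong $H^1_v$ convergence is used ``exactly here'' is false: the paper's proof uses only $\norm{G_n-G}_{L^2}\to0$ together with the uniform $L^\infty$ bound.

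The global Hardy shortcut at the end does not work as written. At exponent $1+\delta$ the fractional Hardy inequality gives $\norm{\abs{\cdot}^{-1-\delta}f}_{L^2}\leq C\norm{f}_{\dot H^{1+\delta}}$, and $\dot H^{1+\delta}$ is not controlled by $H^1$ convergence. The repair is to split the weight between the two factors of $\abs{h_n}\leq\abs{G_n-G}\,(\abs{G_n}+\abs{G})$:
on $\abs{z}\leq1$, put $\abs{z}^{-1}$ on $G_n-G$ (ordinary Hardy), and pair $\gamma+\log(1/\abs{z})\in L^3(B_1)$ with $\abs{G_n}+\abs{G}\in L^6$ by H\"older; on $\abs{z}>1$ the kernel is bounded and Cauchy--Schwarz in $L^2$ suffices.
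This yields $\norm{\cB_\infty[F]-\cB_\infty[G]}_{L^\infty}\leq C(\norm{F}_{H^1}+\norm{G}_{H^1})\norm{F-G}_{H^1}$, the analogue of the paper's Coulomb estimate \eqref{eq:Coulomb-phase-H1-Lipschitz}. That bound is quantitative and genuinely needs no $L^\infty$ hypothesis. The paper's route is more elementary and needs only $L^2$ convergence, but it requires the uniform $L^\infty$ bound.
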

	This lemma controls the saturated phase in the regime $\mu t\to\infty$.
	
	\begin{proof}
		We write
		\[
		\cB_\infty[G_n](v)-\cB_\infty[G](v)
		=
		\kappa
		\int_{\R^3}
		b_\infty(v-w)
		\left(
		\abs{G_n(w)}^2-\abs{G(w)}^2
		\right)
		\,\dd w.
		\]
		Set
		\[
		h_n
		:=
		\abs{G_n}^2-\abs{G}^2.
		\]
		It is enough to prove
		\[
		\norm{b_\infty*h_n}_{L_v^\infty}
		\longrightarrow0.
		\]
		
		First,
		\[
		\norm{h_n}_{L^1}
		\leq
		\left(
		\norm{G_n}_{L^2}
		+
		\norm{G}_{L^2}
		\right)
		\norm{G_n-G}_{L^2}.
		\]
		Since $G_n\to G$ in the profile topology, in particular $G_n\to G$ in $L_v^2$, and the $L^2$-norms are uniformly bounded. Hence
		\begin{equation}\label{eq:Binfty-continuity-hn-L1}
			\norm{h_n}_{L^1}
			\longrightarrow0.
		\end{equation}
		Moreover, the profile topology gives a uniform $L^\infty$-bound, and hence
		\begin{equation}\label{eq:Binfty-continuity-hn-Linfty}
			\norm{h_n}_{L^\infty}
			\leq
			\norm{G_n}_{L^\infty}^2
			+
			\norm{G}_{L^\infty}^2
			\leq
			C.
		\end{equation}
		Also,
		\[
		\norm{h_n}_{L^1}
		\leq
		C
		\]
		uniformly in $n$.
		
		Fix $0<\rho<1/2$ and $R>2$. We decompose
		\[
		b_\infty*h_n
		=
		(\mathbf 1_{\{\abs{\,\cdot\,}\leq\rho\}}b_\infty)*h_n
		+
		(\mathbf 1_{\{\rho<\abs{\,\cdot\,}<R\}}b_\infty)*h_n
		+
		(\mathbf 1_{\{\abs{\,\cdot\,}\geq R\}}b_\infty)*h_n.
		\]
		
		For the near region, using \eqref{eq:Binfty-continuity-hn-Linfty}, we get
		\[
		\begin{aligned}
			\sup_v
			\int_{\abs{v-w}\leq\rho}
			\abs{b_\infty(v-w)}
			\abs{h_n(w)}
			\,\dd w
			&\leq
			\norm{h_n}_{L^\infty}
			\int_{\abs{z}\leq\rho}
			\frac{\abs{\gamma+\log\abs{z}}}{\abs{z}}
			\,\dd z
			\\
			&\leq
			C
			\int_0^\rho
			r\abs{\log r}
			\,\dd r
			\\
			&\leq
			C\rho^2\abs{\log\rho}.
		\end{aligned}
		\]
		This bound is uniform in $n$.
		
		For the far region, if $R$ is sufficiently large, then
		\[
		\sup_{\abs{z}\geq R}
		\abs{b_\infty(z)}
		\leq
		C\frac{\log R}{R}.
		\]
		Thus
		\[
		\sup_v
		\int_{\abs{v-w}\geq R}
		\abs{b_\infty(v-w)}
		\abs{h_n(w)}
		\,\dd w
		\leq
		\sup_{\abs{z}\geq R}\abs{b_\infty(z)}
		\norm{h_n}_{L^1}
		\leq
		C\frac{\log R}{R}.
		\]
		Again this is uniform in $n$.
		
		It remains to estimate the middle region. On the annulus
		\[
		\rho\leq\abs{z}\leq R,
		\]
		the kernel $b_\infty$ is bounded. Hence
		\[
		\begin{aligned}
			\sup_v
			\int_{\rho<\abs{v-w}<R}
			\abs{b_\infty(v-w)}
			\abs{h_n(w)}
			\,\dd w
			&\leq
			C_{\rho,R}\norm{h_n}_{L^1}.
		\end{aligned}
		\]
		By \eqref{eq:Binfty-continuity-hn-L1}, this tends to zero as $n\to\infty$, for fixed $\rho$ and $R$.
		
		Combining the three estimates, we obtain
		\[
		\limsup_{n\to\infty}
		\norm{b_\infty*h_n}_{L_v^\infty}
		\leq
		C\rho^2\abs{\log\rho}
		+
		C\frac{\log R}{R}.
		\]
		Now let first $\rho\to0$, and then $R\to\infty$. This gives
		\[
		\norm{b_\infty*h_n}_{L_v^\infty}
		\longrightarrow0.
		\]
		Therefore
		\[
		\norm{\cB_\infty[G_n]-\cB_\infty[G]}_{L_v^\infty}
		\longrightarrow0.
		\]
		This proves the lemma.
	\end{proof}
	
	\begin{lemma}
		\label{lem:uniform-large-L-limit}
		Let $\mathcal A$ be a family of profiles satisfying
		\begin{equation}\label{eq:uniform-profile-Linfty-class}
			\sup_{G\in\mathcal A}
			\norm{G}_{L_v^\infty}
			<\infty.
		\end{equation}
		Then
		\begin{equation}\label{eq:uniform-large-L-limit}
			\sup_{G\in\mathcal A}
			\norm{
				\cB_L[G]
				+
				(\log L)\cC[G]
				-
				\cB_\infty[G]
			}_{L_v^\infty}
			\longrightarrow0
			\qquad
			\text{as }L\to\infty.
		\end{equation}
		More precisely,
		\begin{equation}\label{eq:uniform-large-L-rate}
			\sup_{G\in\mathcal A}
			\norm{
				\cB_L[G]
				+
				(\log L)\cC[G]
				-
				\cB_\infty[G]
			}_{L_v^\infty}
			\leq
			C L^{-2}
			\sup_{G\in\mathcal A}
			\norm{G}_{L_v^\infty}^2.
		\end{equation}
	\end{lemma}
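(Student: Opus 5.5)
The plan is to reduce \eqref{eq:uniform-large-L-rate} to an explicit identity at the level of kernels, and then to a single integral that can be computed exactly. Fix $z\neq0$ and write $a=\abs{z}$. In the definition \eqref{eq:transition-kernel-bL} of $b_L$, substitute $s=ra$:
\[
\int_0^L\frac{\ee^{-ra}-1}{r}\,\dd r=-\int_0^{La}\frac{1-\ee^{-s}}{s}\,\dd s .
\]
Next, use the classical identity for the entire exponential integral,
\[
\int_0^x\frac{1-\ee^{-s}}{s}\,\dd s=\gamma+\log x+E_1(x),\qquad E_1(x):=\int_x^\infty\frac{\ee^{-s}}{s}\,\dd s,\quad x>0 .
\]
Combining these gives
\[
b_L(z)=\frac{-\gamma-\log L-\log\abs{z}-E_1(L\abs{z})}{\abs{z}} .
\]
Comparing with the definition \eqref{eq:limiting-transition-kernel} of $b_\infty$, we obtain the exact kernel identity
\[
b_L(z)+\frac{\log L}{\abs{z}}-b_\infty(z)=-\frac{E_1(L\abs{z})}{\abs{z}},\qquad z\neq0 .
\]
Since $\cC$, $\cB_L$ and $\cB_\infty$ are all linear in $\abs{G}^2$ with kernels $\abs{z}^{-1}$, $b_L$ and $b_\infty$, this gives
\[
\cB_L[G]+(\log L)\cC[G]-\cB_\infty[G]=-\kappa\,\Bigl(\frac{E_1(L\abs{\,\cdot\,})}{\abs{\,\cdot\,}}*\abs{G}^2\Bigr).
\]

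The second step is the $L^\infty$ estimate. The kernel $E_1(L\abs{z})/\abs{z}$ is nonnegative and integrable on $\R^3$: it has an $\abs{z}^{-1}\log$ singularity at the origin and decays exponentially at infinity. Bounding $\abs{G}^2\leq\norm{G}_{L^\infty}^2$ and using polar coordinates, then substituting $s=Lr$,
\[
\norm{\cB_L[G]+(\log L)\cC[G]-\cB_\infty[G]}_{L^\infty}\leq\norm{G}_{L^\infty}^2\,4\pi\int_0^\infty rE_1(Lr)\,\dd r=4\pi L^{-2}\norm{G}_{L^\infty}^2\int_0^\infty sE_1(s)\,\dd s .
\]
By Fubini, $\int_0^\infty sE_1(s)\,\dd s=\int_0^\infty \ee^{-t}t^{-1}\cdot\tfrac{t^2}{2}\,\dd t=\tfrac12$. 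Hence the bound is $2\pi L^{-2}\norm{G}_{L^\infty}^2$. Taking the supremum over $G\in\mathcal A$ yields \eqref{eq:uniform-large-L-rate}, and \eqref{eq:uniform-large-L-limit} follows immediately.

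The only point needing care is the justification of the $E_1$ identity. I would obtain it by integrating by parts in $\int_0^x(1-\ee^{-s})s^{-1}\,\dd s$, which produces
\[
(1-\ee^{-x})\log x-\int_0^x\ee^{-s}\log s\,\dd s ,
\]
and then using $\int_0^\infty\ee^{-s}\log s\,\dd s=-\gamma$ together with one further integration by parts on the tail $\int_x^\infty$. A further technical remark is that each functional must be well defined separately. This holds for $G\in L^2\cap L^\infty$, which is the class arising in the proof ($W\in L^2\cap L^\infty$), by the estimate \eqref{eq:Coulomb-convolution-L1-Linfty} and the logarithmic analogue used in \Cref{lem:Binfty-functional-continuity}. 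The difference estimate itself, however, requires only the $L^\infty$ bound \eqref{eq:uniform-profile-Linfty-class}.
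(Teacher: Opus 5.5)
Your proof is correct and follows the paper's argument essentially line by line: the exact kernel identity via the exponential integral, the pointwise bound by $\norm{G}_{L^\infty}^2$ times the $L^1$ norm of the kernel, the scaling $s=Lr$, and the Fubini evaluation $\int_0^\infty sE_1(s)\,\dd s=\tfrac12$. Your sign $b_L+\frac{\log L}{\abs{z}}-b_\infty=-E_1(L\abs{z})/\abs{z}\le0$ is the correct one; the paper's $q_L=-\Ei(-L\abs{z})/\abs{z}\ge0$ is a harmless slip, since only $\abs{q_L}$ enters the estimate.
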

	This lemma is needed to pass from $\cB_{\mu t}$ to $\cB_\infty$ in the large regime.
	
	\begin{proof}
		Define
		\[
		q_L(z)
		:=
		b_L(z)
		+
		\frac{\log L}{\abs{z}}
		-
		b_\infty(z),
		\qquad
		z\neq0.
		\]
		Then
		\[
		\cB_L[G]
		+
		(\log L)\cC[G]
		-
		\cB_\infty[G]
		=
		\kappa
		q_L*\abs{G}^2.
		\]
		Thus it suffices to prove that
		\[
		\sup_G
		\norm{q_L*\abs{G}^2}_{L_v^\infty}
		\longrightarrow0
		\qquad
		\text{as }L\to\infty,
		\]
		where $G$ ranges over the bounded profile class.
		
		We first identify $q_L$. By definition,
		\[
		b_L(z)
		=
		\frac1{\abs{z}}
		\int_0^L
		\frac{\ee^{-r\abs{z}}-1}{r}
		\,\dd r.
		\]
		Hence
		\[
		\begin{aligned}
			q_L(z)
			&=
			\frac1{\abs{z}}
			\left[
			\int_0^L
			\frac{\ee^{-r\abs{z}}-1}{r}
			\,\dd r
			+
			\log L
			+
			\gamma
			+
			\log\abs{z}
			\right]
			\\
			&=
			\frac1{\abs{z}}
			\left[
			\int_0^{L\abs{z}}
			\frac{\ee^{-\rho}-1}{\rho}
			\,\dd\rho
			+
			\log(L\abs{z})
			+
			\gamma
			\right].
		\end{aligned}
		\]
		Using the classical identity
		\[
		\int_0^x
		\frac{\ee^{-\rho}-1}{\rho}
		\,\dd\rho
		+
		\log x
		+
		\gamma
		=
		\Ei(-x),
		\qquad
		x>0,
		\]
		where $\Ei(x)$ is the usual exponential integral satisfying
		\[
		\Ei(-x)
		=
		-\int_x^\infty
		\frac{\ee^{-\rho}}{\rho}
		\,\dd\rho,
		\]
		we obtain
		\begin{equation}\label{eq:qL-Ei-representation}
			q_L(z)
			=
			-\frac{\Ei(-L\abs{z})}{\abs{z}}.
		\end{equation}
		In particular \(q_L\geq0\).
		
		Let
		\[
		M_2
		:=
		\sup_G\norm{G}_{L^2},
		\qquad
		M_\infty
		:=
		\sup_G\norm{G}_{L^\infty}.
		\]
		These quantities are finite for the bounded profile class under consideration.
		
		Fix \(v\in\R^3\). Using \eqref{eq:qL-Ei-representation}, we estimate
		\[
		(q_L*\abs{G}^2)(v)
		=
		\int_{\R^3}
		\frac{-\Ei(-L\abs{v-w})}{\abs{v-w}}
		\abs{G(w)}^2
		\,\dd w.
		\]
		Changing variables \(z=v-w\), we split no further and estimate directly by the \(L^\infty\)-bound:
		\[
		(q_L*\abs{G}^2)(v)
		\leq
		\norm{G}_{L^\infty}^2
		\int_{\R^3}
		\frac{-\Ei(-L\abs{z})}{\abs{z}}
		\,\dd z
		=
		4\pi
		\norm{G}_{L^\infty}^2
		\int_0^\infty
		r\bigl[-\Ei(-Lr)\bigr]
		\,\dd r.
		\]
		Changing variables \(s=Lr\), we get
		\[
		\int_0^\infty
		r\bigl[-\Ei(-Lr)\bigr]
		\,\dd r
		=
		L^{-2}
		\int_0^\infty
		s\bigl[-\Ei(-s)\bigr]
		\,\dd s.
		\]
		The last integral is finite because
		\[
		\begin{aligned}
			\int_0^\infty
			s\bigl[-\Ei(-s)\bigr]
			\,\dd s
			&=
			\int_0^\infty
			s
			\left(
			\int_s^\infty
			\frac{\ee^{-\rho}}{\rho}
			\,\dd\rho
			\right)
			\,\dd s
			\\
			&=
			\int_0^\infty
			\frac{\ee^{-\rho}}{\rho}
			\left(
			\int_0^\rho s\,\dd s
			\right)
			\,\dd\rho
			=
			\frac12
			\int_0^\infty
			\rho\ee^{-\rho}
			\,\dd\rho
			<\infty.
		\end{aligned}
		\]
		Therefore
		\[
		\norm{q_L*\abs{G}^2}_{L_v^\infty}
		\leq
		C
		\norm{G}_{L^\infty}^2
		L^{-2}
		\leq
		C M_\infty^2L^{-2}.
		\]
		This bound is uniform in \(G\). Consequently,
		\[
		\sup_G
		\norm{q_L*\abs{G}^2}_{L_v^\infty}
		\leq
		C M_\infty^2L^{-2}
		\longrightarrow0
		\quad
		\text{as }
		L\to\infty.
		\]
		This proves the lemma.
	\end{proof}
	
	\section{Proof of the main theorem}
	\label{sec:proof-main-theorem}
	
	This section applies the frozen-profile asymptotic formula and then checks the three possible limits of $\mu_nt_n$.
	
	The following proposition reduces the proof of the theorem to phase convergence:
	\begin{proposition}
		\label{prop:master-comparison}
		Let $\mu_n\to0$ and $t_n\to\infty$.
		Then
		\begin{equation}\label{eq:master-comparison}
			\norm{
				a_{\mu_n}(t_n)
				-
				\ee^{-\ii\Phi_{\mu_n}[W_{\mu_n}](t_n)}W_{\mu_n}
			}_{L_v^2}
			\longrightarrow0.
		\end{equation}
		Moreover, the following regime-dependent replacements hold.
		
		If $\mu_nt_n\to0$, then
		\begin{equation}\label{eq:master-comparison-small-mut}
			\norm{
				\ee^{-\ii\Phi_{\mu_n}[W_{\mu_n}](t_n)}W_{\mu_n}
				-
				\ee^{-\ii\cC[W](\log t_n)}W
			}_{L_v^2}
			\longrightarrow0.
		\end{equation}
		
		If $\mu_nt_n\to L\in(0,\infty)$, then
		\begin{equation}\label{eq:master-comparison-finite-mut}
			\norm{
				\ee^{-\ii\Phi_{\mu_n}[W_{\mu_n}](t_n)}W_{\mu_n}
				-
				\ee^{-\ii\cC[W](\log t_n)}
				\ee^{-\ii\cB_L[W]}
				W
			}_{L_v^2}
			\longrightarrow0.
		\end{equation}
		
		If $\mu_nt_n\to\infty$, then
		\begin{equation}\label{eq:master-comparison-large-mut}
			\norm{
				\ee^{-\ii\Phi_{\mu_n}[W_{\mu_n}](t_n)}W_{\mu_n}
				-
				\ee^{-\ii\cC[W]\log(1/\mu_n)}
				\ee^{-\ii\cB_\infty[W]}
				W
			}_{L_v^2}
			\longrightarrow0.
		\end{equation}
	\end{proposition}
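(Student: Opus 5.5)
The first assertion \eqref{eq:master-comparison} follows directly from \Cref{prop:frozen-profile-asymptotics}: since the bound \eqref{eq:frozen-profile-asymptotic-quantitative} is uniform in $0\leq\mu\leq1$, evaluating at $(\mu_n,t_n)$ gives an error $C\varepsilon t_n^{-1+C\varepsilon^2}\to0$. For the three replacements, we write $W=W_0$ and use \Cref{lem:frozen-phase-decomposition} to get
\[
\Phi_{\mu_n}[W_{\mu_n}](t_n)=\cC[W_{\mu_n}]\log t_n+\cB_{\mu_nt_n}[W_{\mu_n}]-\cB_{\mu_n}[W_{\mu_n}].
\]
Each target expression has the form $\ee^{-\ii\Psi_n}W$. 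We split
\[
\ee^{-\ii\Phi_n}W_{\mu_n}-\ee^{-\ii\Psi_n}W=\ee^{-\ii\Phi_n}(W_{\mu_n}-W)+(\ee^{-\ii\Phi_n}-\ee^{-\ii\Psi_n})W.
\]
The first piece is bounded by $\norm{W_{\mu_n}-W}_{L^2}\to0$ by \Cref{prop:quantitative-continuity-Wmu}. For the second, we use $\abs{\ee^{-\ii a}-\ee^{-\ii b}}\leq\abs{a-b}$, so it suffices to show $\norm{\Phi_n-\Psi_n}_{L^\infty}\to0$, since $\norm{W}_{L^2}\leq C\varepsilon$. The uniform $L^\infty$ bound on $W_\mu=\ee^{-\ii\Gamma^Y_\mu}Z_\mu$, which comes from \Cref{prop:exact-renormalized-profile}, places $(W_{\mu_n})$ in the profile topology converging to $W$.

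The key sub-estimate controls the Coulomb term against the logarithm. Using \eqref{eq:Coulomb-convolution-L1-Linfty} with $h=\abs{W_\mu}^2-\abs{W}^2$, we get $\norm{\cC[W_\mu]-\cC[W]}_{L^\infty}\leq C\norm{W_\mu-W}_{L^2}^{2/3}$. This is weaker than the rate in \Cref{prop:quantitative-continuity-Wmu} because of the $2/3$ power, and it is the step I expect to be the main obstacle. My first attempt is to interpolate instead through the $H^1$ norm. HLS with $\abs{\cdot}^{-1}$ and Sobolev give $\norm{\abs{\cdot}^{-1}*h}_{L^\infty}\leq C\norm{h}_{L^{3/2-}\cap L^{3/2+}}$, and $\norm{h}_{L^p}\leq C\norm{W_\mu-W}_{L^{2p'}}\cdots$, which are controlled linearly by $\norm{W_\mu-W}_{H^1}$ via $H^1\subset L^2\cap L^6$. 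Hence $\norm{\cC[W_\mu]-\cC[W]}_{L^\infty}\leq C\norm{W_\mu-W}_{H^1}$. Then \Cref{prop:quantitative-continuity-Wmu} yields $\log(1/\mu_n)\norm{\cC[W_{\mu_n}]-\cC[W]}_{L^\infty}\to0$.

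The logarithmic factor differs by regime. In the regimes $\mu_nt_n\to0$ or $L$, we have $\log t_n\leq\log(1/\mu_n)+O(1)$, so this handles $\cC[\cdot]\log t_n$ there.

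\textbf{Regime $\mu_nt_n\to0$.} We bound $\norm{\cB_{\mu_nt_n}[W_{\mu_n}]}_{L^\infty}+\norm{\cB_{\mu_n}[W_{\mu_n}]}_{L^\infty}\leq(\mu_nt_n+\mu_n)\norm{W_{\mu_n}}_{L^2}^2\to0$ using $\abs{b_L}\leq L$ from \eqref{eq:bL-basic-uniform-bound}.

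\textbf{Regime $\mu_nt_n\to L\in(0,\infty)$.} The term $\cB_{\mu_n}\to0$ as before. We write $\cB_{\mu_nt_n}[W_{\mu_n}]-\cB_L[W]=(\cB_{\mu_nt_n}-\cB_L)[W_{\mu_n}]+\cB_L[W_{\mu_n}]-\cB_L[W]$. The second difference tends to zero by \Cref{lem:BL-functional-continuity}. For the first, $\abs{b_{L'}(z)-b_L(z)}\leq\abs{L'-L}$, because the integrand of $b_L$ is bounded by $\abs{z}$.

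\textbf{Regime $\mu_nt_n\to\infty$.} We set $L_n=\mu_nt_n$ and use $\log t_n=\log(1/\mu_n)+\log L_n$. By \Cref{lem:uniform-large-L-limit} on the family $\{W_\mu\}$,
\[
\cC[W_{\mu_n}]\log t_n+\cB_{L_n}[W_{\mu_n}]=\cC[W_{\mu_n}]\log(1/\mu_n)+\cB_\infty[W_{\mu_n}]+O(L_n^{-2}).
\]
The log-weighted Coulomb estimate above replaces $\cC[W_{\mu_n}]\log(1/\mu_n)$ by $\cC[W]\log(1/\mu_n)$. \Cref{lem:Binfty-functional-continuity} replaces $\cB_\infty[W_{\mu_n}]$ by $\cB_\infty[W]$, and $\cB_{\mu_n}[W_{\mu_n}]\to0$ as before. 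Collecting the three regimes gives \eqref{eq:master-comparison-small-mut}--\eqref{eq:master-comparison-large-mut}.
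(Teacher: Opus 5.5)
Your proposal is correct and follows the paper's proof essentially step for step: the first assertion from \Cref{prop:frozen-profile-asymptotics}, the frozen-phase decomposition, the same amplitude/phase splitting, the bound $\abs{b_\ell}\leq\ell$, Lipschitz continuity in $L$ for the finite regime, and \Cref{lem:uniform-large-L-limit} with \Cref{lem:Binfty-functional-continuity} in the large regime. The only difference is the $H^1$-Lipschitz bound for $\cC$. You get it from a split-kernel H\"older/Sobolev argument, which also uses the uniform $L^2\cap L^\infty$ (or $H^1$) bounds on the profiles. The paper instead uses Hardy's inequality, which gives the bound directly with only $\norm{F+G}_{L^2}$ on the right.
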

	
	\begin{proof}
		The first assertion \eqref{eq:master-comparison} follows directly from \Cref{prop:frozen-profile-asymptotics}, i.e., 
		\[
		\norm{
			a_{\mu_n}(t_n)
			-
			\ee^{-\ii\Phi_{\mu_n}[W_{\mu_n}](t_n)}W_{\mu_n}
		}_{L_v^2}
		\leq
		C\varepsilon t_n^{-1+C\varepsilon^2}
		\to0.
		\]
		
		For the Coulomb phase, we recall that,
		for two profiles $F,G\in H_v^1$, Hardy's inequality gives, uniformly in $v$,
		\[
		\int_{\R^3}
		\frac{\abs{F(w)-G(w)}\abs{F(w)+G(w)}}{\abs{v-w}}
		\,\dd w
		\leq
		C
		\norm{F-G}_{H_v^1}
		\norm{F+G}_{L_v^2}.
		\]
		Therefore
		\begin{equation}\label{eq:Coulomb-phase-H1-Lipschitz}
			\norm{\cC[F]-\cC[G]}_{L_v^\infty}
			\leq
			C
			\left(
			\norm{F}_{L_v^2}
			+
			\norm{G}_{L_v^2}
			\right)
			\norm{F-G}_{H_v^1}.
		\end{equation}
		Since the profiles $W_\mu$ are uniformly bounded in $L_v^2$, \Cref{prop:quantitative-continuity-Wmu} implies
		\begin{equation}\label{eq:log-CWmu-CW}
			\log(1/\mu)
			\norm{\cC[W_\mu]-\cC[W]}_{L_v^\infty}
			\longrightarrow0.
		\end{equation}
		
		We also use the elementary bound, using $\abs{b_\ell(z)}\leq \ell$, 
		\begin{equation}\label{eq:BL-small-bound-master}
			\norm{\cB_\ell[F]}_{L_v^\infty}
			\leq
			C\ell\norm{F}_{L_v^2}^2,
			\qquad
			\ell\geq0.
		\end{equation}
		
		Now we are ready to consider each regime:
		
		We first consider the regime $\mu_nt_n\to0$.
		Using the frozen phase decomposition,
		\[
		\Phi_{\mu_n}[W_{\mu_n}](t_n)
		=
		\cC[W_{\mu_n}]\log t_n
		+
		\cB_{\mu_nt_n}[W_{\mu_n}]
		-
		\cB_{\mu_n}[W_{\mu_n}].
		\]
		Since $\mu_nt_n\to0$, we have $\log t_n\leq \log(1/\mu_n)$ for all large $n$.
		Thus, by \eqref{eq:log-CWmu-CW},
		\[
		\log t_n
		\norm{\cC[W_{\mu_n}]-\cC[W]}_{L_v^\infty}
		\to0.
		\]
		Moreover, by \eqref{eq:BL-small-bound-master},
		\[
		\norm{\cB_{\mu_nt_n}[W_{\mu_n}]}_{L_v^\infty}
		+
		\norm{\cB_{\mu_n}[W_{\mu_n}]}_{L_v^\infty}
		\leq
		C\varepsilon^2(\mu_nt_n+\mu_n)
		\to0.
		\]
		Hence
		\[
		\norm{
			\Phi_{\mu_n}[W_{\mu_n}](t_n)
			-
			\cC[W]\log t_n
		}_{L_v^\infty}
		\to0.
		\]
		Since $W_{\mu_n}\to W$ in $H_v^1$, hence in $L_v^2$, we get
		\[
		\begin{aligned}
			&\norm{
				\ee^{-\ii\Phi_{\mu_n}[W_{\mu_n}](t_n)}W_{\mu_n}
				-
				\ee^{-\ii\cC[W](\log t_n)}W
			}_{L_v^2}
			\\
			&\leq
			\norm{W_{\mu_n}-W}_{L_v^2}
			+
			\norm{
				\left(
				\ee^{-\ii\Phi_{\mu_n}[W_{\mu_n}](t_n)}
				-
				\ee^{-\ii\cC[W](\log t_n)}
				\right)W
			}_{L_v^2}
			\\
			&\leq
			\norm{W_{\mu_n}-W}_{L_v^2}
			+
			\norm{
				\Phi_{\mu_n}[W_{\mu_n}](t_n)
				-
				\cC[W]\log t_n
			}_{L_v^\infty}
			\norm{W}_{L_v^2}
			\to0.
		\end{aligned}
		\]
		This proves \eqref{eq:master-comparison-small-mut}.
		
		Next assume $\mu_nt_n\to L\in(0,\infty)$.
		Then
		\[
		\log t_n
		=
		\log(1/\mu_n)
		+
		\log(\mu_nt_n),
		\]
		and $\log(\mu_nt_n)$ is bounded.
		Therefore \eqref{eq:log-CWmu-CW} gives
		\[
		\log t_n
		\norm{\cC[W_{\mu_n}]-\cC[W]}_{L_v^\infty}
		\to0.
		\]
		Also,
		\[
		\norm{\cB_{\mu_n}[W_{\mu_n}]}_{L_v^\infty}
		\leq
		C\varepsilon^2\mu_n
		\to0.
		\]
		For the finite transition term, by $\mu_nt_n\to L$, the uniform compact-$L$ continuity in \Cref{lem:BL-functional-continuity}, and the elementary continuity in the parameter $L$, we have
		\[
		\norm{\cB_{\mu_nt_n}[W_{\mu_n}]-\cB_L[W]}_{L_v^\infty}
		\to0.
		\]
		Consequently,
		\[
		\norm{
			\Phi_{\mu_n}[W_{\mu_n}](t_n)
			-
			\cC[W]\log t_n
			-
			\cB_L[W]
		}_{L_v^\infty}
		\to0.
		\]
		As in the previous case, this implies \eqref{eq:master-comparison-finite-mut}.
		
		Finally assume $\mu_nt_n\to\infty$.
		Set
		\[
		L_n:=\mu_nt_n.
		\]
		Then $L_n\to\infty$, and
		\[
		\log t_n
		=
		\log(1/\mu_n)+\log L_n.
		\]
		Using the frozen phase decomposition,
		\[
		\begin{aligned}
			\Phi_{\mu_n}[W_{\mu_n}](t_n)
			&=
			\cC[W_{\mu_n}]\log t_n
			+
			\cB_{L_n}[W_{\mu_n}]
			-
			\cB_{\mu_n}[W_{\mu_n}]
			\\
			&=
			\cC[W_{\mu_n}]\log(1/\mu_n)
			+
			\left(
			\cB_{L_n}[W_{\mu_n}]
			+
			(\log L_n)\cC[W_{\mu_n}]
			\right)
			-
			\cB_{\mu_n}[W_{\mu_n}].
		\end{aligned}
		\]
		By \eqref{eq:log-CWmu-CW},
		\[
		\log(1/\mu_n)
		\norm{\cC[W_{\mu_n}]-\cC[W]}_{L_v^\infty}
		\to0.
		\]
		By \Cref{lem:uniform-large-L-limit}, applied to the uniformly $L^\infty$-bounded family $\{W_{\mu_n}\}_n$,
		\[
		\norm{
			\cB_{L_n}[W_{\mu_n}]
			+
			(\log L_n)\cC[W_{\mu_n}]
			-
			\cB_\infty[W_{\mu_n}]
		}_{L_v^\infty}
		\to0.
		\]
		By \Cref{lem:Binfty-functional-continuity},
		\[
		\norm{\cB_\infty[W_{\mu_n}]-\cB_\infty[W]}_{L_v^\infty}
		\to0.
		\]
		Finally,
		\[
		\norm{\cB_{\mu_n}[W_{\mu_n}]}_{L_v^\infty}
		\leq
		C\varepsilon^2\mu_n
		\to0.
		\]
		Combining these estimates gives
		\[
		\norm{
			\Phi_{\mu_n}[W_{\mu_n}](t_n)
			-
			\cC[W]\log(1/\mu_n)
			-
			\cB_\infty[W]
		}_{L_v^\infty}
		\to0.
		\]
		Together with $W_{\mu_n}\to W$ in $L_v^2$, this proves \eqref{eq:master-comparison-large-mut}.
		The proof is complete.
	\end{proof}
	
	\vspace{2em}
	
	Now we are ready to have the proof of \Cref{thm:main-trichotomy}.
	
	\begin{proof}[Proof of \Cref{thm:main-trichotomy}]
		Let $\mu_n\to0$ and $t_n\to\infty$.
		We first work at the level of the rescaled amplitudes.
		By \Cref{prop:master-comparison}, we have
		\begin{equation}\label{eq:main-proof-master-amplitude}
			\norm{
				a_{\mu_n}(t_n)
				-
				\ee^{-\ii\Phi_{\mu_n}[W_{\mu_n}](t_n)}W_{\mu_n}
			}_{L_v^2}
			\longrightarrow0.
		\end{equation}
		
		Assume first that $\mu_nt_n\to0$.
		Then \Cref{prop:master-comparison} gives
		\[
		\norm{
			\ee^{-\ii\Phi_{\mu_n}[W_{\mu_n}](t_n)}W_{\mu_n}
			-
			\ee^{-\ii\cC[W]\log t_n}W
		}_{L_v^2}
		\longrightarrow0.
		\]
		Combining this with \eqref{eq:main-proof-master-amplitude}, we obtain
		\begin{equation}\label{eq:main-proof-amplitude-small-mut}
			a_{\mu_n}(t_n,v)
			=
			\ee^{-\ii\cC[W](v)\log t_n}
			W(v)
			+
			o_{L_v^2}(1).
		\end{equation}
		
		Assume next that $\mu_nt_n\to L\in(0,\infty)$.
		Then \Cref{prop:master-comparison} gives
		\[
		\norm{
			\ee^{-\ii\Phi_{\mu_n}[W_{\mu_n}](t_n)}W_{\mu_n}
			-
			\ee^{-\ii\cC[W]\log t_n}
			\ee^{-\ii\cB_L[W]}
			W
		}_{L_v^2}
		\longrightarrow0.
		\]
		Together with \eqref{eq:main-proof-master-amplitude}, this yields
		\begin{equation}\label{eq:main-proof-amplitude-finite-mut}
			a_{\mu_n}(t_n,v)
			=
			\ee^{-\ii\cC[W](v)\log t_n}
			\ee^{-\ii\cB_L[W](v)}
			W(v)
			+
			o_{L_v^2}(1).
		\end{equation}
		
		Finally assume that $\mu_nt_n\to\infty$.
		Then \Cref{prop:master-comparison} gives
		\[
		\norm{
			\ee^{-\ii\Phi_{\mu_n}[W_{\mu_n}](t_n)}W_{\mu_n}
			-
			\ee^{-\ii\cC[W]\log(1/\mu_n)}
			\ee^{-\ii\cB_\infty[W]}
			W
		}_{L_v^2}
		\longrightarrow0.
		\]
		Combining this with \eqref{eq:main-proof-master-amplitude}, we get
		\begin{equation}\label{eq:main-proof-amplitude-large-mut}
			a_{\mu_n}(t_n,v)
			=
			\ee^{-\ii\cC[W](v)\log(1/\mu_n)}
			\ee^{-\ii\cB_\infty[W](v)}
			W(v)
			+
			o_{L_v^2}(1).
		\end{equation}
		
		It remains only to return to physical space.
		By the exact relation
		\[
		u_\mu(t,x)
		=
		(\ii t)^{-3/2}
		\ee^{\frac{\ii\abs{x}^2}{2t}}
		a_\mu(t,x/t),
		\]
		the map
		\[
		A(v)
		\mapsto
		(\ii t)^{-3/2}
		\ee^{\frac{\ii\abs{x}^2}{2t}}
		A(x/t)
		\]
		is unitary from $L_v^2$ to $L_x^2$.
		Applying this unitary map to \eqref{eq:main-proof-amplitude-small-mut}, \eqref{eq:main-proof-amplitude-finite-mut}, and \eqref{eq:main-proof-amplitude-large-mut} gives respectively \eqref{eq:main-theorem-regime-small-mut}, \eqref{eq:main-theorem-regime-finite-mut}, and \eqref{eq:main-theorem-regime-large-mut}.
		This proves the theorem.
	\end{proof}
	
	\section*{Acknowledgment}
	Y. Cho was supported by the National Research Foundation of Korea(NRF) grant funded by the Korea government(MSIT) (RS-2024-00333393). J. Lee was partially supported by Global - Learning \& Academic research institution for Master's$\cdot$PhD students, and Postdocs (G-LAMP) Program of the National Research Foundation of Korea (NRF) grant funded by the Ministry of Education (No.~RS-2025-25442355), and by a grant from Kyung Hee University in 2026 (KHU-20262263).
	
	\bibliographystyle{alpha}
	\bibliography{reference}
	
\end{document}